\theoremstyle{definition}
\newtheorem{theorem}{Theorem}
\newtheorem*{theorem*}{Theorem}
\numberwithin{theorem}{section}
\newtheorem{proposition}[theorem]{Proposition}
\newtheorem{lemma}[theorem]{Lemma}
\newtheorem{remark}[theorem]{Remark}
\newtheorem{cor}[theorem]{Corollary}
\DeclareMathOperator{\grad}{grad}
\DeclareMathOperator{\spanned}{span}
\DeclareMathOperator{\gr}{gr}
\DeclareMathOperator{\initial}{in}
\DeclareMathOperator{\trop}{trop}
\newcommand{\la}{\lambda}
\newcommand{\ra}{\rightarrow}
\newcommand{\om}{\omega}
\newcommand{\mU}{\mathcal U}
\newcommand{\bR}{\mathbb{R}}
\newcommand{\bs}{\bf s}
\title{Weighted PBW degenerations and tropical flag varieties}
\author{X. Fang}
\address{Xin Fang:\newline University of Cologne, Mathematical Institute, 
Weyertal 86--90, 50931, Cologne, Germany}
\email{xinfang.math@gmail.com}
\author{E. Feigin}
\address{Evgeny Feigin:\newline
Department of Mathematics, National Research University Higher School of Economics,
Usacheva str. 6, 119048, Moscow, Russia,\newline
{\it and }\newline
Skolkovo Institute of Science and Technology, Skolkovo Innovation Center, Building 3,
Moscow 143026, Russia
}
\email{evgfeig@gmail.com}
\author{G. Fourier}
\address{Ghislain Fourier:\newline Leibniz Universit\"at Hannover, Institute for Algebra, Number Theory
and Discrete Mathematics, Welfengarten 1, 30167 Hannover, Germany}
\email{fourier@math.uni-hannover.de}
\author{I. Makhlin}
\address{Igor Makhlin:\newline
Skolkovo Institute of Science and Technology, Skolkovo Innovation Center, Building 3,
Moscow 143026, Russia
\newline
{\it and }\newline
National Research University Higher School of Economics, 
International Laboratory of Representation Theory and Mathematical Physics,
Usacheva str. 6, 119048, Moscow, Russia}
\email{imakhlin@mail.ru}
\begin{document}
\begin{abstract}
We study algebraic, combinatorial and geometric aspects of weighted PBW-type degenerations of (partial) flag varieties in type $A$. These degenerations are labeled by degree functions lying in an explicitly defined polyhedral cone, which can be identified with a maximal cone in the tropical flag variety. Varying the degree function in the cone, we recover, for example, the classical flag variety, its abelian PBW degeneration, some of its linear degenerations and a particular toric degeneration. 
\end{abstract}

\maketitle

\section*{Introduction}
PBW degenerations of modules and projective varieties gained a lot of attention in the past decade, this fast growing subject provides new links between combinatorics, geometric representation theory, toric geometry and quiver Grassmannians to name but a few. The origin is a simple observation, namely let $\mathfrak n^-$ be the complex Lie algebra of strictly lower triangular $n\times n$-matrices and $U(\mathfrak n^-)$ be its universal enveloping algebra. 
By setting to one the degree of $f_{i,j}$ in the basis of elementary matrices, one obtains a filtration on $U(\mathfrak n^-)$ and every cyclically generated $\mathfrak n^-$-module (\cite{FFL1}). The associated graded structure is then abelian. This construction can be further transferred to (partial) flag varieties $F$, identified with highest weight orbits of an algebraic group, and provides the PBW degenerate flag variety \cite{Fe1}. This machinery was generalized in various directions, for an overview one may refer to \cite{CFFFR} or \cite{FFoL2}.

In this paper, we take a different approach. 
Instead of giving each $f_{i,j}$ degree $1$, we consider a weight system $A$, which is defined as a collection of integers $a_{i,j}$. Such a weight system induces a filtration of $\mathfrak n^-$  and $U(\mathfrak n^-)$. One could then ask for conditions on the weight system such that the associated graded vector spaces admit reasonable algebraic structures, e.g. the associated graded space of $\mathfrak n^-$ inherits a natural non-trivial graded Lie algebra structure.

In fact we provide an explicit description of a polyhedral cone $\mathcal{K}$, such that for any weight system $A$ in the cone one can use the machinery of PBW degenerations to construct degenerate cyclic modules and degenerate flag varieties $F^A$.

We study their combinatorial, algebraic and geometric properties, depending on the relative position of $A$ in $\mathcal{K}$. We point out that for certain weight systems discussions of the varieties $F^A$ can be found in the literature. For example, if $a_{i,j} = 0$ for all $i,j$, then we obtain just the classical flag variety; if $a_{i,j} = 1$ we obtain the PBW degenerate flag variety $F^a$ (\cite{Fe1}); if $a_{i,j} = (j-i+1)(n-j)$ we obtain the degeneration into a toric variety discussed in (\cite{FFR}, \cite{FFL3}). In fact, we prove that the latter degeneration is obtained for any weight system $A$ in the relative interior of $\mathcal{K}$ (Theorem~\ref{toricthm}). We also obtain (Remark~\ref{pbwlocus}) some of the linear degenerate flag varieties (PBW locus) discussed in (\cite{CFFFR}).

From now on, let $A$ be a weight system in $\mathcal{K}$, we show that one can embed (similarly to the classical case) the degenerate flag variety $F^A$ into a product of projective spaces of (degenerate) representation spaces. Classically, the ideal $I^0$ of Pl\"ucker relations on the Pl\"ucker coordinates describes the image of the flag variety in this product. Using our weight system, we naturally attach a degree $s_{\bullet}^A$ to each Pl\"ucker coordinate. Let $I^A$ be the initial ideal of $I^0$ with respect to these degrees. Then the first theorem is
\begin{theorem*}[Theorem \ref{main} and Proposition \ref{Aquadratic}]
The ideal $I^A$ is the defining ideal of $F^A$ with respect to the embedding above. Moreover, $I^A$ is generated by its quadratic part.
\end{theorem*}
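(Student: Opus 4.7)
I would prove the first assertion by a two-sided containment: $I^A \subseteq I(F^A)$ via equivariance of the degenerate group action, and $I^A \supseteq I(F^A)$ via a multi-graded Hilbert function comparison. For the second (quadratic generation) assertion I would reduce to the toric case of Theorem~\ref{toricthm} by a further flat degeneration and upper semicontinuity of graded Betti numbers.

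\textbf{Step 1: $I^A$ vanishes on $F^A$.} Each Plücker coordinate $p_J$ corresponds, under the classical embedding $F\hookrightarrow \prod_k \bP(V(\om_k))$, to a weight functional on $V(\om_{|J|})$. The weight system $A$ endows $V(\om_k)$ with a $\mathbb{Z}$-filtration whose depth on the $J$-th extremal vector is, by construction, exactly $s_J^A$; the associated graded module $V^A(\om_k)=\gr V(\om_k)$ carries an action of the degenerate group $G^A$ built from $\gr U(\fn^-)$. The $s^A$-initial forms of the classical quadratic Plücker relations are $G^A$-equivariant and vanish at the highest weight line, hence vanish on the orbit closure $F^A \subseteq \prod_k \bP(V^A(\om_k))$, giving $I^A \subseteq I(F^A)$.

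\textbf{Step 2: Hilbert functions match.} Flatness of the Gröbner degeneration makes $\Sym/I^A$ and $\Sym/I^0$ share the same multi-graded Hilbert series, equal by Borel--Weil to $\sum_\la (\dim V(\la))q^\la$. On the $F^A$ side, the weighted PBW machinery developed earlier in the paper (in the spirit of \cite{FFL1}) should supply both the dimension equality $\dim V^A(\la) = \dim V(\la)$ and the surjectivity of the multiplication maps $V^A(\om_{k_1})^*\otimes\cdots\otimes V^A(\om_{k_s})^* \twoheadrightarrow V^A(\la)^*$ precisely because $A\in\mathcal{K}$, so that the Hilbert series of the homogeneous coordinate ring of $F^A$ agrees with that of $F$. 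Combined with Step~1 this forces $I^A = I(F^A)$. This representation-theoretic input, controlling the dimensions of arbitrary irreducible degenerate modules starting from the fundamental ones and ensuring the degenerate Borel--Weil identification, is the main substantive obstacle of the proof; it is exactly where the defining inequalities of $\mathcal{K}$ must be used.

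\textbf{Step 3: Quadratic generation.} Picking $A_0$ in the relative interior of $\mathcal{K}$, Theorem \ref{toricthm} identifies $I^{A_0}$ with the toric ideal of an explicit toric degeneration of $F$, which in type $A$ is a Hibi-type ideal of a Gelfand--Tsetlin-like polytope and hence generated in degree two. A standard Gröbner-theoretic interpolation, using a weight vector pointing from $A$ into the interior of $\mathcal{K}$, exhibits $I^{A_0}$ as a further flat degeneration of $I^A$. Upper semicontinuity of graded Betti numbers in the resulting flat family then yields $\beta_{1,d}(I^A) \leq \beta_{1,d}(I^{A_0}) = 0$ for all $d>2$, so $I^A$ is itself generated in degree two; by flatness in degree two the generators are exactly the $s^A$-initial forms of the classical quadratic Plücker relations.
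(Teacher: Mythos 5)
Your overall scheme (show $I^A\subseteq I(F^A)$, match Hilbert functions, reduce quadratic generation to the toric case) is aligned in spirit with the paper's proof, and Step~2 in particular isolates exactly the hard representation-theoretic input, which the paper supplies via Lemma~\ref{tensor}, Proposition~\ref{fflvlinind}, and the linear independence of $\psi^A(X^A(Y))$ for $Y\in\mathcal Y_\mu$. However, there are two real gaps.

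First, in Step~1 you appeal to ``$G^A$-equivariance of the $s^A$-initial forms'' to conclude the relations vanish on the orbit closure. This presupposes that $I^A$, as a subspace of $R_{\bf d}^A$, is stable under the action of $N^A$ on the coordinate ring, which is not obvious and is not argued. The paper circumvents this entirely: it parametrizes the orbit by $\exp(f)$, writes the coordinate functions $C_{i_1,\dots,i_k}$ and $C^A_{i_1,\dots,i_k}$ as polynomials in the $z_{i,j}$, observes that $C^A$ is the $\grad_z^A$-initial part of $C$, and then the initial form of any classical Plücker relation evaluates under $\psi^A$ to the initial part of $\psi(X')=0$. That direct computation both avoids the equivariance question and simultaneously sets up the machinery used in the dimension count. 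If you want to use equivariance, you must first prove $N^A$-stability of $I^A$, which is roughly equivalent in difficulty to the argument you are trying to replace.

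Second, Step~3 has the essential gap. The assertion that the toric ideal $I^{A_0}$ is ``a Hibi-type ideal of a Gelfand--Tsetlin-like polytope and hence generated in degree two'' is both factually off and question-begging. The relevant polytope here is the FFLV polytope $Q_\lambda$, not the Gelfand--Tsetlin polytope; they are different polytopes with different combinatorics, and the quadratic generation of the FFLV toric ideal is itself a theorem that must be proved. The paper does exactly that in the first half of the proof of Proposition~\ref{Aquadratic}, via an explicit straightening algorithm on PBW tableaux: given a non-semistandard pair of columns one produces a binomial relation $X^A_I X^A_J \pm X^A_{I'} X^A_{J'}\in I_{\bf d}^A$ coming from $T(e^A_I)+T(e^A_J)=T(e^A_{I'})+T(e^A_{J'})$, which relies on the Minkowski property (Lemma~\ref{minkowski}) and the bijection of Lemma~\ref{fflvtab}. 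Without this input your semicontinuity argument has nothing to bootstrap from. Granting the toric base case, your use of upper semicontinuity of graded Betti numbers along the flat family given by Proposition~\ref{ABinitial} is a clean and correct way to conclude $\beta_{1,d}(I^A)=0$ for $d>2$, and is a genuinely slicker packaging than the paper's iterative lifting of relations from $I^B$ back to $I^A$; but it only transfers quadratic generation, it cannot create it.
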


In \cite{Fe1} a monomial basis in the homogeneous coordinate ring of the abelian degenerate flag variety $F^a$ has been constructed using PBW semi-standard Young tabelaux. It turns out that theses monomials form a basis in the homogeneous coordinate ring of $F^A$ for all $A \in \mathcal{K}$ (again generalizing results from \cite{FFL3}).

As we mentioned above, the degenerate flag varieties $F^A$ are labeled by weight systems belonging to a certain explicitly
described cone $\mathcal K$. $F^A$ only depends on the relative position in the cone and the flag varieties degenerate along the face lattice of $\mathcal{K}$:
\begin{theorem*}[Proposition \ref{facesame} and Proposition \ref{ABinitial}]
Let $H_A$ (resp. $H_B$) be the minimal face of $\mathcal{K}$ that contains a weight system $A$ (resp. $B$). If $H_A = H_B$, then $F^A \simeq F^B$ as projective varieties. Moreover, if $H_B \subseteq H_A$, then $F^A$ is a degeneration of $F^B$.
\end{theorem*}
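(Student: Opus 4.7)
The plan is to reduce both claims to standard Gr\"obner-theoretic facts about initial ideals, using the description of $I^A$ provided by Theorem~\ref{main} as $I^A = \initial_{s^A}(I^0)$.

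For the first claim, the essential point is that $A \mapsto s^A$ is linear in the entries $a_{i,j}$, so the face structure of $\mathcal K$ descends to the image of $\mathcal K$ in Pl\"ucker weight space. Combined with the identification of $\mathcal K$ with a maximal cone in the tropical flag variety, the faces of $\mathcal K$ are precisely the Gr\"obner cones of $I^0$ that refine $\mathcal K$. The initial ideal is constant on the relative interior of a Gr\"obner cone, so $H_A = H_B$ forces $\initial_{s^A}(I^0) = \initial_{s^B}(I^0)$, i.e.\ $I^A = I^B$. Hence $F^A = F^B$ as subschemes of the ambient product of projective spaces, and the required isomorphism is immediate.

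For the second claim I would use the classical perturbation lemma. Choose representatives $A$ in the relative interior of $H_A$ and $B$ in the relative interior of $H_B$; since $H_B \subseteq H_A$, for all sufficiently small $\epsilon > 0$ the weight $s^B + \epsilon s^A$ lies in the relative interior of the Gr\"obner cone $H_A$, so
\begin{equation*}
\initial_{s^A}(\initial_{s^B}(I^0)) = \initial_{s^B + \epsilon s^A}(I^0) = \initial_{s^A}(I^0) = I^A.
\end{equation*}
In other words $I^A = \initial_{s^A}(I^B)$, and the standard Rees-algebra construction produces a flat family over $\mathbb A^1$ with generic fiber $F^B$ and special fiber $F^A$ at $t = 0$, which is exactly what is meant by saying $F^A$ is a degeneration of $F^B$.

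The main technical obstacle is the bookkeeping in the first paragraph: one must check that $A \mapsto s^A$ carries the combinatorial face lattice of $\mathcal K$ faithfully onto the Gr\"obner fan of $I^0$ restricted to the image, so that distinct faces yield distinct initial ideals and face inclusions match Gr\"obner specialisations. Once this identification is in place, both parts of the theorem are consequences of standard Gr\"obner degeneration, but the identification itself draws on the explicit description of $\mathcal K$ and its interpretation as a maximal cone in the tropical flag variety that is developed earlier in the paper.
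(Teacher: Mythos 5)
Your first paragraph has a genuine gap. You want to derive the constancy of the initial ideal on relative interiors of faces of $\mathcal K$ from the claim that ``the faces of $\mathcal K$ are precisely the Gr\"obner cones of $I^0$ that refine $\mathcal K$.'' But this claim is not a consequence of Theorem~\ref{Trop} as stated: that theorem shows that $\mathcal C = h(\mathcal K)$ is a maximal convex polyhedral cone contained \emph{set-theoretically} in $\trop(\mathcal{F}l_n)$, and this by itself does not rule out the Gr\"obner fan subdividing $\mathcal C$ into several full-dimensional cones on each of which the (different) initial ideal is monomial-free. The fact that the face lattice of $\mathcal K$ matches the Gr\"obner fan is precisely what is established by Proposition~\ref{facesame}(iii), so invoking it to prove Proposition~\ref{facesame} is circular. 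Moreover, in the paper's logical order, Theorem~\ref{Trop} comes \emph{after} and uses (at least implicitly) the content of Section~\ref{cone}; you would need to show $\mathcal C$ is a single Gr\"obner cone by some independent means before your first paragraph could work.

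The paper avoids this by a direct representation-theoretic argument: it shows that the nontrivial commutators in $\mathfrak n_-^A$, the structure of $L_\lambda^A$ via Lemma~\ref{tensor}, and finally the polynomials $C^A_{i_1,\ldots,i_k}$ defining the map $\psi^A$ whose kernel is $I^A_{\mathbf d}$, are all determined solely by which of the inequalities (a), (b) are strict, i.e.\ by the minimal face containing $A$. Proposition~\ref{ABinitial} is likewise proved by showing $C^B_{i_1,\ldots,i_k}$ is the $\grad_z^B$-initial part of $C^A_{i_1,\ldots,i_k}$. Your second paragraph (the perturbation lemma and the Rees construction for the degeneration direction) is sound \emph{conditional on} the first paragraph, and it would recover the flat family of Proposition~\ref{free}, but as it stands the argument is incomplete because the needed identification of faces with Gr\"obner cones has not been established independently.
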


Recall the degrees $s^A_{\bullet}$ attached to the Pl\"ucker coordinates. Let $\mathcal C=\{(s_{\bullet}^A) \mid A\in\mathcal K\}$ be the set of all 
collections of degrees obtained from all weight systems $A \in \mathcal{K}$. By construction, $F^A$ is irreducible and thus the initial ideal $I^A$ does not contain any monomials. This implies that $\mathcal C$ is contained in the tropical flag variety \cite{BLMM}. 
We prove the following theorem.
\begin{theorem*}[Theorem \ref{Trop}]
$\mathcal C$ is a maximal cone in the tropical flag variety.
\end{theorem*}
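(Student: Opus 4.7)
My plan is to identify $\mathcal C$ with the Gröbner cone $\sigma$ of the toric initial ideal supplied by Theorem~\ref{toricthm}, and to observe that $\sigma$ is a maximal cone of $\trop(F\ell_n)$.

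Since $s^A_\bullet$ depends linearly on the entries of $A$, the map $\phi\colon A\mapsto s^A_\bullet$ is linear and $\mathcal C=\phi(\mathcal K)$ is a polyhedral cone. For $A_0$ in the relative interior of $\mathcal K$, Theorem~\ref{toricthm} tells us $I^{A_0}=:I^{\mathrm{tor}}$ is a prime, binomial, monomial-free ideal defining a variety of dimension $\dim F\ell_n$, so its (closed) Gröbner cone $\sigma$ is a maximal cone of $\trop(F\ell_n)$. By Propositions~\ref{facesame} and \ref{ABinitial}, for every $A\in\mathcal K$ the ideal $I^A$ arises as an initial degeneration of $I^{\mathrm{tor}}$, so $s^A_\bullet\in\sigma$. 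Hence $\mathcal C\subseteq\sigma$.

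For the reverse inclusion I would match face lattices. By Proposition~\ref{facesame} the map $\phi$ factors through the face poset of $\mathcal K$: each face $H$ of $\mathcal K$ determines a single ideal $I^H$, and hence a single face $\phi(H)$ of $\sigma$. The goal is to show that this correspondence is a bijection between faces of $\mathcal K$ and faces of $\sigma$, in which case $\mathcal C$ meets the relative interior of every face of $\sigma$ and, together with a dimension count $\dim\mathcal C=\dim\mathcal K-\dim\ker\phi=\dim F\ell_n+\ell_{\trop}=\dim\sigma$ (with $\ell_{\trop}$ the lineality of $\trop(F\ell_n)$ coming from the torus action and projective scalings), forces $\mathcal C=\sigma$.

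The main obstacle is the surjectivity of this face correspondence: one must classify all binomial initial degenerations of $I^{\mathrm{tor}}$ arising along the face lattice of $\mathcal K$ and show that no additional faces of $\sigma$ are missed. The explicit description of $\mathcal K$ together with the formula for $s^A_J$ from earlier in the paper should reduce this to a finite combinatorial check; getting the inequalities defining $\mathcal K$ to precisely track the facet inequalities of $\sigma$ (up to the lineality subspace) is the technical heart of the argument.
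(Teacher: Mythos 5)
Your first step, showing $\mathcal C\subseteq\sigma$ where $\sigma$ is the (closed) Gr\"obner cone of the toric initial ideal from Theorem~\ref{toricthm}, is sound and in fact implicitly parallels the paper's observation that $\mathcal C\subseteq\trop(\mathcal Fl_n)$ because the degenerate varieties are irreducible. However, the reverse inclusion is the real content of the theorem, and your proposal does not establish it: you sketch a strategy of matching the face lattice of $\mathcal K$ with the face lattice of $\sigma$, but then explicitly concede that the surjectivity of this correspondence is ``the technical heart'' and is left open. That is a genuine gap. Merely knowing $\mathcal C\subseteq\sigma$ with $\dim\mathcal C=\dim\sigma$ does not force equality of closed cones, so without the missing step the argument is inconclusive.

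The paper takes a more economical route. Instead of analyzing the entire Gr\"obner/face structure of $\sigma$, it argues: any maximal cone $\mathcal C'$ of $\trop(\mathcal Fl_n)$ containing $\mathcal C$ must, for dimension reasons, lie in the linear space $h(\mathbb R^{n(n-1)/2})$ and hence satisfy the linear relations [i]--[iii] characterizing that subspace. It then takes a point $\bs\in\mathcal C'$ and shows it satisfies the remaining inequalities [iv] and [v] by exhibiting, for each candidate facet, an explicit three-term Pl\"ucker relation whose initial term would become a single monomial if the inequality were violated --- contradicting $\bs\in\trop(\mathcal Fl_n)$. This turns the ``reverse inclusion'' into a handful of short explicit computations rather than a classification of all initial degenerations along the face lattice. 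If you want to repair your argument, the most efficient fix is precisely this: after establishing $\mathcal C\subseteq\sigma$ and the linear conditions, verify the defining inequalities of $\mathcal K$ one by one by choosing Pl\"ucker relations that would be degenerated to monomials on the ``wrong side'' of each facet.
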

We derive explicit inequalities providing a non-redundant description of the facets of the cone $\mathcal C$.
To the best of our knowledge, this is the first appearance of a precise description of a maximal cone for each $n > 1$ (see \cite{SS,MaS,BLMM} for partial results in this direction).

The paper is organized as follows. In Section \ref{prelim} we recall definitions and results concerning the classical and PBW degenerate 
representations and flag varieties. In Section \ref{wPBW} we define the weighted PBW degenerations and derive  their 
first properties. In Section \ref{dPr} we write down the quadratic Pl\"ucker relations for the flag varieties $F_\la^A$.
Section \ref{proof} contains the proof of Theorem \ref{main} describing the ideals of the degenerate flag varieties. 
The toric degenerations corresponding to the weight systems 
in the interior of the cone of weight systems are considered in Section \ref{toric}. Section \ref{cone} discusses the cone
$\mathcal K$. The link to the theory of tropical flag varieties is described in Section \ref{tropical}. Finally, in 
Section \ref{BW} we prove a Borel-Weil-type theorem for the degenerate flag varieties $F_\la^A$.

\section{Preliminaries}\label{prelim}

\subsection{The classical theory}\label{classical}
For a Lie algebra $\mathfrak{g}$, let $\mathcal{U}(\mathfrak{g})$ denote the enveloping algebra associated with it.
Fix an integer $n\ge 2$ and consider the Lie algebra $\mathfrak{g}=\mathfrak{sl}_n(\mathbb C)$ of complex $n\times n$-matrices with trace 0. We have the Cartan decomposition $\mathfrak{g=n_+\oplus h\oplus n_-}$, the summands being, respectively, the subalgebra of upper triangular nilpotent matrices, the subalgebra of diagonal matrices and the subalgebra of lower triangular nilpotent matrices. Denote the corresponding set of simple roots $\alpha_1,\ldots,\alpha_{n-1}\in\mathfrak h^*$ and denote the set of positive roots $\Phi^+$. Then 
$$\Phi^+=\{\alpha_{i,j}=\alpha_i+\ldots+\alpha_{j-1}\mid 1\le i<j\le n\}.$$ For every positive root $\alpha_{i,j}$ we fix a non-zero 
element $f_{i,j}\in\mathfrak n_-$ in the weight space of weight $-\alpha_{i,j}$ in such a way that the following relation holds 
whenever $i\le k$: 
\begin{equation}\label{comm}
[f_{i,j},f_{k,l}]=
\begin{cases}
f_{i,l},\text{ if }j=k,\\
0, \text{ otherwise}. 
\end{cases}
\end{equation}
Let $\omega_1,\ldots,\omega_{n-1}$ be the fundamental weights of $\mathfrak{g}$.
Let $\la=a_1\om_1+\ldots+a_{n-1}\om_{n-1}\in\mathfrak{h}^*$ be an integral dominant weight for some $a_i\in\mathbb{Z}_{\geq 0}$. 
We denote $\bf d$ the tuple $(d_1,\ldots,d_s)$ where $\{d_1<\ldots<d_s\}$ is the set of all $i$ for which $a_i\neq 0$. 

Let $L_\la$ denote the finite-dimensional irreducible $\mathfrak g$-representation with highest weight $\la$. 
We fix a non-zero highest weight vector $v_\lambda\in L_\lambda$, then $L_\lambda=\mathcal{U}(\mathfrak{n}_-)v_\lambda$. 
Recall that for $h\in\mathfrak{h}$, $hv_\lambda=\lambda(h)v_\lambda$ and $\mathfrak{n}_+v_\lambda=0$.

Let $N$ be the Lie group of lower unitriangular complex $n\times n$-matrices, $N$ is diffeomorphic to $\mathbb C^{n \choose 2}$ 
and thus connected and simply connected. The 
Lie algebra $\mathrm{Lie}N$ can be naturally identified with $\mathfrak n_-$ 
which provides an action of $N$ on $L_\la$ via the exponential map. 
We then have $\mathbb C Nv_\la=L_\la$. Furthermore, we also obtain an action of $N$ 
on the projectivization $\mathbb P(L_\la)$. 
Let $u_\lambda\in \mathbb{P}(L_\la)$ be the point corresponding to the line $\mathbb{C}v_\lambda$.
The closure $\overline{Nu_\la}\subset\mathbb P(L_\la)$ is known as the (partial) flag variety which we will denote $F_\la$.

Let $V=\mathbb{C}^n$ be the vector representation of $\mathfrak{g}$ with basis $e_1,\ldots,e_n$ and $f_{i,j}$ mapping 
$e_i$ to $e_j$ while mapping $e_l$ with $l\neq i$ to 0. 
Then for $1\le k\le n-1$ we have $L_{\omega_k}=\wedge^k V$. For $1\leq i_1,\ldots,i_k\leq n$, we denote 
$$e_{i_1,\ldots,i_k}:=e_{i_1}\wedge\ldots\wedge e_{i_k}.$$
Then for any $\sigma\in S_k$, $$e_{i_1,\ldots,i_k}=(-1)^{\ell(\sigma)} e_{i_{\sigma(1)},\ldots,i_{\sigma(k)}}$$ where $\ell(\sigma)$ 
is the inversion number of $\sigma$. The vector space $L_{\omega_k}$ admits the basis 
$$\{e_{i_1,\ldots,i_k}\mid 1\le i_1<\ldots<i_k\le n\}.$$ The vector $e_{1,\ldots,k}$ is a highest weight 
vector and we assume that $v_{\om_k}=e_{1,\ldots,k}$. Note that $f_{i,j}$ maps $e_{i_1,\ldots,i_k}$ to 0 whenever 
$i\notin\{i_1,\ldots,i_k\}$ or $j\in\{i_1,\ldots,i_k\}$ and otherwise to $e_{i'_1,\ldots,i'_N}$ where $i'_l=i_l$ when 
$i_l\neq i$ and $i'_l=j$ when $i_l=i$.

We define the $\mathfrak g$-representation $$U_\la=L_{\om_1}^{\otimes a_1}\otimes\ldots\otimes L_{\om_{n-1}}^{\otimes a_{n-1}},$$ 
and denote 
$$w_\la=v_{\om_1}^{\otimes a_1}\otimes\ldots\otimes v_{\om_{n-1}}^{\otimes a_{n-1}}\in U_\la.$$ The subrepresentation 
$\mU(\mathfrak g)w_\la\subset U_\la$ is isomorphic to $L_\la$ by identifying $w_\la$ with $v_\la$.
We thus obtain the embedding $F_\la\subset\mathbb P(L_\la)\subset\mathbb P(U_\la)$.

We consider the Segre embedding $$\mathbb P(L_{\om_1})^{a_1}\times\ldots\times\mathbb P(L_{\om_{n-1}})^{a_{n-1}}\subset\mathbb P(U_\la)$$ 
and the embedding 
$$\mathbb P_{\bf d}=\mathbb P(L_{\om_{d_1}})\times\ldots\times\mathbb P(L_{\om_{d_s}})\subset \mathbb P(L_{\om_1})^{a_1}\times\ldots\times\mathbb P(L_{\om_{n-1}})^{a_{n-1}}$$ where $\mathbb P(L_{\om_{d_i}})$ is embedded diagonally into $\mathbb P(L_{\om_{d_i}})^{a_{d_i}}$. Let $y_\la$ be the point in $\mathbb P(U_\la)$ corresponding to $\mathbb Cw_\la$. The definition of the Segre embedding 
implies that $N y_\la\subset \mathbb P_{\bf d}$ which gives us an embedding $F_\la\subset \mathbb P_{\bf d}$.

The polynomial ring $$R_{\bf d}=\mathbb C[\{X_{i_1,\ldots,i_{d_j}}\mid 1\le j\le s,1\le i_1<\ldots<i_{d_j}\}]$$ is the homogeneous 
coordinate ring of $\mathbb P_{\bf d}$ with homogeneous coordinate $X_{i_1,\ldots,i_{d_j}}$ dual to the basis vector 
$e_{i_1,\ldots,i_{d_j}}\in L_{\om_{d_j}}$. In particular, the ring $R_{\bf d}$ is graded by $\mathbb Z_{\ge 0}^s$ with the generator 
$X_{i_1,\ldots,i_{d_j}}$ having homogeneity degree $(0,\ldots,1,\ldots,0)$ with the 1 being the $j$th coordinate. We embed 
$\mathbb Z_{\ge 0}^s$ into $\mathfrak h^*$ and view these homogeneity degrees as integral dominant weights by setting 
$\deg X_{i_1,\ldots,i_{d_j}}=\om_{d_j}$. 

Let us denote $X_{i_1,\ldots,i_p}=(-1)^{\ell(\sigma)} X_{i_{\sigma(1)},\ldots,i_{\sigma(p)}}$ for any 
$p\in\{d_1,\ldots,d_s\}$, $\{i_1,\ldots,i_p\}\subset\{1,\ldots,n\}$ and $\sigma\in S_p$. 
We consider the ideal $I_{\bf d}$ of $R_{\bf d}$ generated by the following quadratic elements (known as Pl\"ucker relations). 
For $1\leq k\leq q$ and a pair of collections of pairwise distinct elements $1\leq i_1,\ldots,i_p\leq n$ and 
$1\leq j_1,\ldots j_q\leq n$ where $p,q\in\{d_1,\ldots,d_s\}$ with $p\geq q$, the corresponding Pl\"ucker relation is:
\begin{equation}\label{plucker}
X_{i_1,\ldots,i_p}X_{j_1,\ldots,j_q}-\sum_{\substack{\{r_1,\ldots,r_k\}\subset\\\{i_1,\ldots,i_p\}}} 
X_{i'_1,\ldots,i'_p}X_{r_1,\ldots,r_k,j_{k+1},\ldots,j_q}
\end{equation}
where $i'_l=j_m$ if $i_l=r_m$ for some $1\le m\le k$ and $i'_l=i_l$ otherwise.  Note that the Pl\"ucker relations~(\ref{plucker}) are all homogeneous elements of the ring. 
\begin{theorem}\label{classicalplucker}
The ideal of the subvariety $F_\la\subset \mathbb P_{\bf d}$ is precisely $I_{\bf d}$. 
\end{theorem}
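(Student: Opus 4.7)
The plan is to prove the two inclusions $I_{\bf d}\subseteq I(F_\la)$ and $I(F_\la)\subseteq I_{\bf d}$ separately.

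For the first inclusion I would verify that every Plücker relation from \eqref{plucker} vanishes on $F_\la$. The base point $y_\la$ corresponds to the standard flag $(\spanned\{e_1,\ldots,e_{d_j}\})_j$, so all coordinates $X_{i_1,\ldots,i_p}$ vanish at $y_\la$ unless $\{i_1,\ldots,i_p\}=\{1,\ldots,p\}$; a quick index check shows each term of \eqref{plucker} contains a factor with a non-initial index set, so the relation vanishes at $y_\la$. Because $F_\la=\overline{Ny_\la}$ and the expression \eqref{plucker} comes from a $G$-equivariant contraction $L_{\om_p}\otimes L_{\om_q}\to L_{\om_{p-k}}\otimes L_{\om_{q+k}}$ between tensor products of fundamental representations, vanishing at $y_\la$ propagates to the whole orbit and then to its closure. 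Equivalently, the $X_{i_1,\ldots,i_p}$ are the usual Grassmann--Plücker coordinates of the $p$-th step of the flag, and \eqref{plucker} is the classical quadratic identity between minors of different sizes of a matrix whose row span is the flag.

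For the reverse inclusion I would pass to a dimension count in each multidegree. By the Borel--Weil theorem applied to the parabolic quotient realizing $F_\la$, the multigraded homogeneous coordinate ring of $F_\la\subset\mathbb P_{\bf d}$ decomposes as
$$\bigoplus_{(m_1,\ldots,m_s)\in\mathbb Z_{\ge 0}^s} L_{m_1\om_{d_1}+\ldots+m_s\om_{d_s}}^{*}.$$
It is therefore enough to show that for every multidegree $\mathbf m=(m_1,\ldots,m_s)$ the space $(R_{\bf d}/I_{\bf d})_{\mathbf m}$ has dimension equal to $\dim L_{m_1\om_{d_1}+\ldots+m_s\om_{d_s}}$, since the surjection from $R_{\bf d}/I_{\bf d}$ onto the coordinate ring of $F_\la$ would then be forced to be an isomorphism.

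The standard route is Hodge's theory of standard monomials: call a monomial $X_{I_1}\cdots X_{I_r}$ \emph{standard} if its column-indices $I_1,\ldots,I_r$ fit into the columns of a semi-standard Young tableau of shape $(d_1^{m_1},\ldots,d_s^{m_s})$. One then shows (a) that modulo $I_{\bf d}$ every monomial can be rewritten as a linear combination of standard monomials, using straightening relations extracted from \eqref{plucker}; and (b) that standard monomials restrict to linearly independent functions on $F_\la$, for instance by restricting to the open opposite Schubert cell and using unitriangularity with respect to a suitable order. Counting standard monomials of multidegree $\mathbf m$ gives exactly $\dim L_{m_1\om_{d_1}+\ldots+m_s\om_{d_s}}$ by the Weyl character formula, closing the argument.

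The main obstacle is the straightening part (a): rewriting a nonstandard product $X_I X_J$ as a linear combination of lexicographically larger standard products modulo $I_{\bf d}$ requires a careful induction on a suitable total order and repeated application of the quadratic Plücker relations \eqref{plucker}. The linear independence (b) and the Hilbert series comparison are then comparatively routine.
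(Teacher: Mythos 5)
The paper does not prove Theorem~\ref{classicalplucker}: it is classical, and the text simply refers to \cite{carter,fulton}. Your proposal reconstructs the standard textbook argument from those references — in one direction the Pl\"ucker/incidence relations are checked to vanish on the flag, in the other Hodge's standard monomial theory (straightening plus linear independence of standard monomials, with the SSYT count matching the Weyl dimension) forces the surjection $R_{\bf d}/I_{\bf d}\twoheadrightarrow \mathbb C[F_\la]$ to be an isomorphism — so this is essentially the intended proof.

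One small inaccuracy in your first inclusion is worth flagging. At $y_\la$ it is not true that ``each term of \eqref{plucker} contains a factor with a non-initial index set'': taking $\{i_1,\ldots,i_p\}=\{1,\ldots,p\}$ and $\{j_1,\ldots,j_q\}=\{1,\ldots,q\}$, the leading term $X_{1,\ldots,p}X_{1,\ldots,q}$ is nonzero at $y_\la$, and the relation vanishes there only after cancellation against the summand with $\{r_1,\ldots,r_k\}=\{1,\ldots,k\}$. Your alternative observation — that the $X_{I}$ are the minors of a matrix whose rows span the flag and \eqref{plucker} is the classical quadratic minor identity — is the clean way to establish $I_{\bf d}\subseteq I(F_\la)$ and avoids the base-point computation altogether. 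I would also note that once (a) spanning by standard monomials modulo $I_{\bf d}$ and (b) linear independence of their restrictions to $F_\la$ are both in hand, the isomorphism follows directly without invoking Borel–Weil or the Weyl count; the dimension formula is then a corollary rather than an input.
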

In particular, $F_\la$ only depends on the set $\{d_1,\ldots,d_s\}$. We also mention that the dimension of the component of $R_{\bf d}/I_{\bf d}$ of homogeneity degree $\la$ is equal to $\dim L_\la$.

The theory presented in this subsection can, for instance, be found in~\cite{carter,fulton}.

\subsection{Abelian PBW degenerations}\label{abelian}

We give a brief overview of the theory of abelian PBW (Poincar\'e--Birkhoff--Witt) degenerations following \cite{FFL1} and~\cite{Fe1}. 

The universal enveloping algebra $\mU=\mU(\mathfrak n_-)$ is equipped with a $\mathbb Z_{\ge 0}$-filtration known as the PBW filtration. 
The $m$th  component of the filtration is defined as 
$$(\mU)_m^a=\spanned(\{f_{i_1,j_1}\ldots f_{i_k,j_k}\mid k\le m\}),$$ 
the linear span of all PBW monomials of PBW degree no greater than $m$. 
By the PBW theorem, the associated graded algebra $\gr\mU$ is isomorphic to the symmetric algebra $\mathcal S^*(\mathfrak n_-)$.

The $\mathbb Z_{\ge 0}$-filtration on $\mU$ induces a $\mathbb Z_{\ge 0}$-filtration on $L_\la$ via $(L_\la)^a_m=(\mU)^a_m v_\la$. 
The associated graded space $\gr L_\la$ is naturally a module over the associated graded algebra $\gr\mU=\mathcal S^*(\mathfrak n_-)$. 
This $\mathcal S^*(\mathfrak n_-)$-module is most commonly known as the {\it PBW degeneration} of $L_\la$ and is denoted
by $L_\la^a$ with the ``$a$'' standing for ``abelian'' in reference to the commutativity of $\mathcal S^*(\mathfrak n_-)$. 
We will refer to this object as the {\it abelian PBW degeneration} or simply the {\it abelian degeneration} to distinguish 
it among the more general objects studied in this paper. Note that the component $(L_\la)^a_0$ is precisely $\mathbb Cv_\la$ 
and the 0th homogeneous component of $L_\la^a$ is one-dimensional, let $v_\la^a$ be the image of $v_\la$ therein. 
It can be easily seen that $L_\la^a=\mathcal S^*(\mathfrak n_-)v_\la^a$.

Now observe that $\mathcal S^*(\mathfrak n_-)$ is the universal enveloping algebra $\mU(\mathfrak n_-^a)$ of the abelian Lie algebra 
$\mathfrak n_-^a$ on the vector space $\mathfrak n_-$. We denote $f_{i,j}^a\in \mathfrak n_-^a$ the image of $f_{i,j}$ under the canonical linear isomorphism $\mathfrak n_-\to \mathfrak n_-^a$. 

$L_\la^a$ is a representation of $\mathfrak n_-^a$ and, consequently, of the corresponding connected simply connected Lie group $N^a$. The group $N^a$ is just $\mathbb C^{n\choose2}$ with $\mathbb C$ viewed as Lie group under addition. Further, we have an action of $N^a$ on $\mathbb P(L_\la^a)$. We denote $u_\la^a$ the point in $\mathbb P(L_\la^a)$ corresponding to $\mathbb Cv_\la^a$ and consider the closure $\overline{N^a u_\la^a}\subset\mathbb P(L_\la^a)$. This subvariety is known as the {\it PBW degenerate flag variety} or the {\it abelian degeneration} of $F_\la$ and is denoted $F_\la^a$.

Similarly to the above classical situation we define the $\mathfrak n_-^a$-representation 
$$U_\la^a=(L_{\om_1}^a)^{\otimes a_1}\otimes\ldots\otimes (L_{\om_{n-1}}^a)^{\otimes a_{n-1}}$$ 
and the vector $$w_\la^a=(v_{\om_1}^a)^{\otimes a_1}\otimes\ldots\otimes (v_{\om_{n-1}}^a)^{\otimes a_{n-1}}\in U_\la^a.$$ The subrepresentation $\mU(\mathfrak n_-^a)w_\la^a\subset U_\la^a$ is isomorphic to $L_\la^a$ via identifying $w_\la^a$ with $v_\la^a$.

We define the subvariety 
$$\mathbb P_{\bf d}^a=\mathbb P(L_{\om_{d_1}}^a)\times\ldots\times\mathbb P(L_{\om_{d_s}}^a)\subset\mathbb P(U_\la^a)$$ 
via the Segre embedding and obtain the embedding $F_\la^a\subset \mathbb P_{\bf d}^a$.

For every $1\le k\le n-1$ and $i_1<\ldots<i_k$ we may consider the least $m$ such that $e_{i_1,\ldots,i_k}\in(L_{\om_k})_m$ and denote $e_{i_1,\ldots,i_k}^a\in L_{\om_k}^a$ the image of $e_{i_1,\ldots,i_k}$ in $(L_{\om_k})_m /(L_{\om_k})_{m-1}$ (with $(L_{\om_k})_{-1}=0$). We will also use the notation $s_{i_1,\ldots,i_k}^a=m$.

The vectors $e_{i_1,\ldots,i_k}^a$ comprise a basis in $L_{\om_k}^a$. This allows us to view the ring 
$$R_{\bf d}^a=\mathbb C[\{X_{i_1,\ldots,i_{d_j}}^a\mid 1\le j\le s,1\le i_1<\ldots<i_{d_j}\}]$$ as the homogeneous coordinate ring of 
$\mathbb P_{\bf d}^a$ where the homogeneous coordinate $X_{i_1,\ldots,i_{d_j}}^a$ is the dual basis element to $e_{i_1,\ldots,i_{d_j}}^a$. 
We introduce an additional $\mathbb Z_{\ge 0}$-grading on $R_{\bf d}^a$ by setting 
$\grad^a X_{i_1,\ldots,i_{d_j}}^a=s_{i_1,\ldots,i_{d_j}}^a$. Now consider the isomorphism 
$$\varphi:R_{\bf d}\rightarrow R_{\bf d}^a,\ \ X_{i_1,\ldots,i_{d_j}}\mapsto X_{i_1,\ldots,i_{d_j}}^a.$$ 
Let $I_{\bf d}^a$ denote the initial ideal $\initial_{\grad^a}(\varphi(I_{\bf d}))$, i.e. the ideal spanned by the elements obtained 
by considering an $X\in\varphi(I_{\bf d})$ and then taking the sum of monomials in $X$ with lowest grading $\grad^a$ 
(the initial part $\initial_{\grad^a}X$ of $X$).

\begin{theorem}[\cite{Fe1}]\label{abplucker}
The ideal of the subvariety $F_\la^a\subset \mathbb P_{\bf d}^a$ is precisely $I_{\bf d}^a$.
\end{theorem}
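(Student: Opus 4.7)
The plan is a standard Gröbner-style argument: the inclusion $I^a_{\bf d} \subseteq J(F_\lambda^a)$ comes from a one-parameter flat degeneration controlled by the grading $\grad^a$, and the reverse inclusion is forced by a Hilbert-function count using the fact that $\dim L_\lambda^a = \dim L_\lambda$.

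For the first step, the grading $\grad^a$ defines a $\mathbb C^*$-action on each $\mathbb P(L_{\om_{d_j}})$ assigning weight $s^a_{i_1,\dots,i_{d_j}}$ to the coordinate dual to $e_{i_1,\dots,i_{d_j}}$. Running this action produces, by the general principles of Gröbner degenerations, a flat family in $\mathbb P^a_{\bf d}$ whose generic fibre is $F_\lambda$ in a rescaled embedding and whose special fibre is the subscheme $V(I^a_{\bf d})\subseteq\mathbb P^a_{\bf d}$. To see that this special fibre contains $F_\lambda^a$, I conjugate the $N$-action on each $L_{\om_{d_j}}$ by the corresponding cocharacter $\rho(t)$: the rescaled operators $t\cdot\mathrm{Ad}_{\rho(t)}(f_{i,j})$ preserve the PBW filtration, and as $t\to 0$ they converge on $e_{i_1,\dots,i_{d_j}}$ to operators that act in the basis $e^a_{i_1,\dots,i_{d_j}}$ exactly as $f^a_{i,j}$ on $L^a_{\om_{d_j}}$, by the very definition of the associated graded. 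Consequently the flat limit of the orbit $Nu_\lambda$ is contained in the $N^a$-orbit closure $\overline{N^au^a_\lambda}=F_\lambda^a$, so $F_\lambda^a\subseteq V(I^a_{\bf d})$, i.e.\ $I^a_{\bf d}\subseteq J(F_\lambda^a)$.

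For the second step, passage to an initial ideal with respect to a $\mathbb Z_{\ge 0}$-weight preserves the multi-graded Hilbert function, so for every multi-degree $(a'_1,\dots,a'_s)$ with $\lambda'=\sum_j a'_j\om_{d_j}$ the dimension remark following Theorem \ref{classicalplucker} yields
\[
\dim(R^a_{\bf d}/I^a_{\bf d})_{\lambda'} \;=\; \dim(R_{\bf d}/I_{\bf d})_{\lambda'} \;=\; \dim L_{\lambda'}.
\]
On the other hand, the $\lambda'$-graded piece of the homogeneous coordinate ring of $F_\lambda^a\subseteq\mathbb P^a_{\bf d}$ contains the image of $\mathcal U(\mathfrak n_-^a)w^a_{\lambda'}\cong L^a_{\lambda'}$, hence has dimension at least $\dim L^a_{\lambda'}=\dim L_{\lambda'}$ by Feigin's dimension formula. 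Combined with the inclusion from step one, these two inequalities force equality $I^a_{\bf d}=J(F_\lambda^a)$ in every multi-degree.

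The main obstacle is the orbit identification in the first step. A priori the flat degeneration only tells us that the limit of $\overline{Nu_\lambda}$ lies inside $V(I^a_{\bf d})$; the substantive claim is that this limit equals $\overline{N^au^a_\lambda}$ rather than something larger. The convergence of the rescaled operators $t\cdot\mathrm{Ad}_{\rho(t)}(f_{i,j})$ to $f^a_{i,j}$ is essentially built into the definition of the associated graded, but some care is required to transfer this from the Lie algebra to the exponentiated group elements and onwards to the Segre-embedded orbit in $\mathbb P^a_{\bf d}$; granting that transfer, the Hilbert-function count of step two closes the argument.
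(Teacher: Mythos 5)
The paper does not itself prove Theorem~\ref{abplucker}---it is cited from~\cite{Fe1}---but the relevant in-house argument is the one for its generalization, Theorem~\ref{main}, in Section~\ref{proof}; that is the right benchmark. Your strategy (Gr\"obner initial ideal gives one inclusion; a Hilbert-function comparison forces equality) shares its skeleton with the paper's, but the two steps are implemented differently. For the inclusion $I^a_{\bf d}\subseteq J$, you run a $\mathbb C^*$-degeneration and argue via convergence of conjugated group elements, whereas the paper works purely algebraically: it writes down the orbit coordinate polynomials $C_{i_1,\ldots,i_k}$ explicitly, observes that $C^A_{i_1,\ldots,i_k}$ is the $\grad^A_z$-initial part of $C_{i_1,\ldots,i_k}$, and concludes that initial parts of classical Pl\"ucker relations must vanish. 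For the lower bound on the coordinate ring, you invoke the identification of the orbit span with the cyclic module $\mathcal U(\mathfrak n_-^a)w^a_\mu\cong L^a_\mu$ (Lemma~\ref{tensor} in the abelian case, stated in Section~\ref{abelian}) and dualize; the paper instead exhibits an explicit spanning family by proving that the images under $\psi^A$ of the PBW SSYT monomials $X^A(Y)$, $Y\in\mathcal Y_\mu$, are linearly independent, which yields the extra Corollary~\ref{Apbwssyt} for free. Both routes are sound; yours is shorter because you take the cyclic-module isomorphism as given, and it does not reprove the FFLV/PBW SSYT bases.

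Two local points deserve correction. First, the logic at the end of your step~1 is stated backwards: you write that the flat limit of $Nu_\lambda$ is \emph{contained in} $F^a_\lambda$ and then conclude $F^a_\lambda\subseteq V(I^a_{\bf d})$, which does not follow. What you actually establish (and want) is that each point of $N^au^a_\lambda$ arises as a limit $\lim_{t\to 0}\rho(t)\exp(tf)u_\lambda$, hence lies in the flat limit $V(I^a_{\bf d})$, hence $F^a_\lambda=\overline{N^au^a_\lambda}\subseteq V(I^a_{\bf d})$. Second, the phrase ``the $\lambda'$-graded piece of the homogeneous coordinate ring ... contains the image of $\mathcal U(\mathfrak n_-^a)w^a_{\lambda'}$'' is not literally meaningful; the correct statement is dual: the $\mu$-graded piece of $R^a_{\bf d}/J$ is the space of linear functionals on $U^a_\mu$ modulo those annihilating the orbit span $\mathcal U(\mathfrak n_-^a)w^a_\mu$, so it is isomorphic to $(L^a_\mu)^*$ and in particular has dimension $\dim L^a_\mu=\dim L_\mu$. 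With these two sentences rewritten, your argument is a clean alternative proof.
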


The following more explicit characterization of $I_{\bf d}^a$ is also given in~\cite{Fe1}.
\begin{proposition}[\cite{Fe1}]\label{abquadratic}
$I_{\bf d}^a$ is generated by its quadratic part, i.e. it is generated by the initial parts (with respect to $\grad^a$) of the Pl\"ucker relations~(\ref{plucker}).
\end{proposition}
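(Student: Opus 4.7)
The plan is to prove the proposition via a dimension-count sandwich. Let $J \subseteq R_{\bf d}^a$ denote the ideal generated by the initial parts $\initial_{\grad^a} P$ of the Pl\"ucker relations $P$ from~(\ref{plucker}). By definition of $I_{\bf d}^a$ as an initial ideal, we have $J \subseteq I_{\bf d}^a$, which induces a surjection $R_{\bf d}^a / J \twoheadrightarrow R_{\bf d}^a / I_{\bf d}^a$ of $\mathbb Z_{\ge 0}^s$-graded algebras. Hence in every homogeneity degree $\mu$ we get
\begin{equation*}
\dim (R_{\bf d}^a / J)_\mu \;\ge\; \dim (R_{\bf d}^a / I_{\bf d}^a)_\mu.
\end{equation*}
The proof then reduces to establishing the reverse inequality in the particular degree $\mu = \la$.

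For the lower bound on the right-hand side, I would invoke the standard fact that passing to an initial ideal with respect to a weight grading preserves the Hilbert function: since $I_{\bf d}^a$ is, up to the relabeling $\varphi$, the initial ideal of $I_{\bf d}$, we have $\dim (R_{\bf d}^a / I_{\bf d}^a)_\la = \dim (R_{\bf d} / I_{\bf d})_\la = \dim L_\la$, the last equality being the classical statement recalled at the end of Section~\ref{classical}. Equivalently, $\dim (R_{\bf d}^a / I_{\bf d}^a)_\la = \dim L_\la^a$, which also follows from Theorem~\ref{abplucker} together with the fact that $L_\la^a$ is, as a graded vector space, canonically isomorphic to $L_\la$.

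For the upper bound on $\dim (R_{\bf d}^a / J)_\la$ we exhibit a spanning set of cardinality $\dim L_\la$. The natural candidate is the set of monomials in the $X_{i_1,\ldots,i_{d_j}}^a$ indexed by PBW semi-standard Young tableaux of shape $\la$ in the sense of~\cite{FFL1, Fe1}; recall that the number of such tableaux is exactly $\dim L_\la^a = \dim L_\la$. So what must be done is to take an arbitrary monomial of weight $\la$ in $R_{\bf d}^a$ and, using only the quadratic relations $\initial_{\grad^a} P \in J$, rewrite it modulo $J$ as a linear combination of PBW-SSYT monomials. This straightening procedure is the technical heart of the argument: one orders monomials lexicographically (or by some other total order compatible with $\grad^a$), finds a non-semistandard pair of columns in a given monomial, and replaces it using the appropriate initial Pl\"ucker relation; the key point is to verify that the replacement strictly decreases the chosen order so that the process terminates.

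The main obstacle is exactly this straightening step: one must check that every non-PBW-SSYT monomial actually appears as the leading term of some initial Pl\"ucker relation in $J$ (with respect to the chosen term order), and that the other terms produced by straightening are strictly smaller. This is where the combinatorics of the PBW condition on tableaux must be carefully synchronized with the degrees $s_{i_1,\ldots,i_k}^a$ that define $\grad^a$. Once termination is established, the three inequalities $\dim L_\la \ge \dim (R_{\bf d}^a / J)_\la \ge \dim (R_{\bf d}^a / I_{\bf d}^a)_\la = \dim L_\la$ collapse to equalities in every graded component, forcing $J = I_{\bf d}^a$ and completing the proof.
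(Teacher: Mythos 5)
Your dimension-sandwich-plus-straightening strategy is the right one and matches what \cite{Fe1}, the reference cited for this proposition, does; the paper itself does not reprove the abelian case but instead establishes the more general Proposition~\ref{Aquadratic} in Section~\ref{cone}.

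The genuine gap is that you leave the technical heart, the straightening, entirely unverified---and you flag this yourself. To close the sandwich you must show that any monomial $X^a(Y)$ with $Y$ not PBW semistandard can be reduced modulo $J$ by a quadratic initial Pl\"ucker relation, with strict descent in some term order, and you supply neither the explicit relation for a given violating pair of columns, nor the order, nor the descent check. This is not routine in the abelian case because the $\grad^a$-initial parts of the Pl\"ucker relations are genuinely multi-term, so one must analyze which terms survive initialization and how they compare under the chosen order. By contrast, the paper's proof of the more general Proposition~\ref{Aquadratic} avoids this difficulty by first treating a weight system $B$ in the interior of the cone $\mathcal K$, where the relevant initial Pl\"ucker relations collapse to \emph{binomials} $X^B_{i_1,\ldots,i_k}X^B_{i'_1,\ldots,i'_{k'}} \pm X^B_{j_1,\ldots,j_k}X^B_{j'_1,\ldots,j'_{k'}}$ so the straightening swap and descent are immediate, and then reduces the general case (including the abelian one) to the interior case via Proposition~\ref{ABinitial}. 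To complete your argument you would either have to carry out the abelian straightening in full or adopt this toric-then-reduce route. A smaller point: you set up the count only in degree $\la$ but then assert equality in all graded components; you should run the comparison for every $\mu\in\mathbb Z_{\ge 0}\{\om_{d_1},\ldots,\om_{d_s}\}$, using $\dim(R_{\bf d}^a/I_{\bf d}^a)_\mu=\dim L_\mu=|\mathcal Y_\mu|$ in each, to conclude $J=I_{\bf d}^a$.
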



\subsection{FFLV bases and FFLV polytopes}

In~\cite{FFL1} certain combinatorial monomial bases in $L_\lambda$ and $L_\la^a$ are constructed. 
We briefly recall the definitions of these bases.

First we define the set $\Pi_\lambda$ that parametrizes the elements in each basis. This set is comprised of certain arrays 
of integers each containing ${n \choose 2}$ elements. Each array $T$ consists of elements $T_{i,j}$ with $1\le i<j\le n$. 
We visualize $T$ as a number triangle in the following way:
\begin{center}
\begin{tabular}{ccccccc}
$T_{1,2}$ &&$ T_{2,3}$&& $\ldots$ && $T_{n-1,n}$\\
&$T_{1,3}$ &&$ \ldots$&& $T_{n-2,n}$ &\\
&&$\ldots$ &&$ \ldots$& &\\
&&&$T_{1,n}$ &&&
\end{tabular}
\end{center}
Thus a horizontal row contains all $T_{i,j}$ with a given difference $j-i$.

To specify when $T\in\Pi_\lambda$ the notion of a {\it Dyck path} is used. We understand a Dyck path to be a sequence of pairs of integers 
$((i_1,j_1),\ldots,(i_N,j_N))$ with $1\le i<j\le n$ such that $j_1-i_1=j_N-i_N=1$ (both lie in the top row) and $(i_{k+1},j_{k+1})$ 
is either $(i_k+1,j_k)$ or $(i_k,j_k+1)$ for any $1\le k\le N-1$ (either the upper-right or the bottom-right neighbor of $(i_k,j_k)$). 
The set $\Pi_\lambda$ consists of all arrays $T$ whose elements are nonnegative integers such that for any Dyck path 
$$d=((i_1,j_1),\ldots,(i_N,j_N))$$ one has $$T_{i_1,j_1}+\ldots+T_{i_N,j_N}\le a_{i_1}+a_{i_1+1}+\ldots+a_{i_N}.$$ 
We will denote the right hand side by $M(\lambda,d)$. For a number triangle $T$ and a Dyck path $d$ we will 
denote the left-hand side above via $S(T,d)$. We will refer to $T\in\Pi_\la$ as {\it FFLV patterns}.

\begin{theorem}[\cite{FFL1}]
The set $$\left\{\left(\prod_{i,j} (f_{i,j}^a)^{T_{i,j}}\right) v_\la^a\mid T\in\Pi_\la\right\}$$ constitutes a basis in $L_\la^a$.
\end{theorem}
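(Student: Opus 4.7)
\medskip
\noindent\textbf{Proof proposal.}
The plan is to verify two things: (i) that the given monomials span $L_\la^a$, and (ii) that they are linearly independent, which together will produce a basis. The linear independence will be obtained via a cardinality count, by showing $|\Pi_\la|=\dim L_\la$, so that (i) combined with the equality $\dim L_\la^a=\dim L_\la$ forces a basis.

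First, for the spanning statement, I would exploit the fact that $L_\la^a=\mathcal S^*(\mathfrak n_-)v_\la^a$, so that arbitrary products $\bigl(\prod_{i,j}(f_{i,j}^a)^{s_{i,j}}\bigr)v_\la^a$ with $s_{i,j}\in\mathbb Z_{\geq 0}$ span $L_\la^a$. The goal is to show that any such product can be rewritten as a linear combination of the ones indexed by $T\in\Pi_\la$. I would fix a total order on exponent arrays that refines the partial order by total PBW degree (e.g.\ a homogeneous degree-lex order), and I would produce \emph{straightening relations}: for each Dyck path $d$ and each exponent array $s$ with $S(s,d)>M(\la,d)$, a relation expressing $\bigl(\prod (f_{i,j}^a)^{s_{i,j}}\bigr)v_\la^a$ as a combination of strictly smaller (in the chosen order) monomials applied to $v_\la^a$. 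The source of such relations is the fact that the polynomials $f_\alpha^{a_1(\alpha)+\ldots+a_k(\alpha)+1}$ associated with simple vectors annihilate the highest weight vector, combined with the action of the Weyl group or of $\mathfrak n_+$ (which is recovered from the PBW grading); taking iterated brackets along a Dyck path produces elements of the symmetric algebra that annihilate $v_\la^a$ modulo lower-degree terms. Iterating the straightening then reduces every monomial to one indexed by an element of $\Pi_\la$.

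Second, for the enumeration, I would show $|\Pi_\la|=\dim L_\la$. The set $\Pi_\la$ is exactly the set of lattice points in the polytope $P_\la\subset\mathbb R^{\binom n2}$ cut out by the Dyck path inequalities. The key combinatorial input is the Minkowski sum decomposition $P_\la=\sum_{k=1}^{n-1}a_k P_{\om_k}$ together with the Minkowski-additivity of the lattice-point count for the FFLV polytopes (so that $|\Pi_\la|$ is given by a polynomial in $(a_1,\ldots,a_{n-1})$ whose value matches the Weyl dimension formula). This additivity is the combinatorial heart of the proof. I would establish it either by producing a bijection between lattice points of $P_\la$ and tuples of lattice points in the summands $P_{\om_k}$ (exhibited by an explicit splitting rule on patterns), or by a Brion/Ehrhart-type argument reducing to fundamental-weight cases, where $|\Pi_{\om_k}|=\binom nk=\dim L_{\om_k}$ is immediate.

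Combining (i) with $|\Pi_\la|=\dim L_\la=\dim L_\la^a$ yields that the claimed monomials are a basis. The main obstacle is the straightening step in (i): carrying out a clean inductive reduction that respects the PBW grading and simultaneously kills all monomials violating Dyck inequalities without introducing uncontrolled terms. A careful choice of monomial order (one that is compatible with the Dyck path structure on exponent arrays) and a Gr\"obner-type argument, where the initial ideal of the annihilator of $v_\la^a$ in $\mathcal S^*(\mathfrak n_-)$ is shown to be generated by monomials $\prod (f_{i,j})^{T_{i,j}}$ with $T\notin\Pi_\la$, is the cleanest route; the rest of the proof then amounts to the polytope count sketched above.
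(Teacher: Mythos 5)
This theorem is cited from~\cite{FFL1} and not reproved in the paper, so the comparison is against the argument in that reference. Your spanning step is in the right spirit: \cite{FFL1} uses the relations $f_{i,j}^{m_{i,j}+1}v_\la=0$ with $m_{i,j}=\langle\la,\alpha_{i,j}^\vee\rangle=a_i+\dots+a_{j-1}$, together with divided-power differential operators on $\mathcal S^*(\mathfrak n_-)$ (the graded residue of the $\mathfrak n_+$-action) to generate the straightening relations. Drop the hedge ``or of the Weyl group''---the Weyl group does not act on $L_\la^a$, so only the $\mathfrak n_+$ differential operators are available; the rest of the sketch, including the Gr\"obner reformulation, is the right shape.

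The genuine gap is your dimension count. The bijection you posit between $\Pi_\la$ and tuples of lattice points in the summands $P_{\om_k}$ cannot exist: the Minkowski sum map $\prod_k(\Pi_{\om_k})^{a_k}\to\Pi_\la$ furnished by Lemma~\ref{minkowski} is surjective but badly non-injective, since $\prod_k\binom{n}{k}^{a_k}=\dim U_\la$ strictly exceeds $\dim L_\la=|\Pi_\la|$ as soon as $\la$ is not a single fundamental weight. The Brion/Ehrhart fallback is equally underdetermined---$|\Pi_\la|$ is a polynomial of degree $\binom{n}{2}$ in $(a_1,\dots,a_{n-1})$, and its values at the $n-1$ fundamental weights do not pin it down. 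In fact \cite{FFL1} does not prove $|\Pi_\la|=\dim L_\la$ combinatorially at all; that equality is a \emph{corollary} of the basis theorem, not an input. Linear independence is shown directly: after spanning gives $\dim L_\la^a\le|\Pi_\la|$, one maps $L_\la^a$ into the tensor product of abelianized fundamentals and exhibits, for each $T\in\Pi_\la$, a leading tensor monomial of $\bigl(\prod(f_{i,j}^a)^{T_{i,j}}\bigr)w_\la^a$ that depends injectively on $T$; the Minkowski property enters precisely here, and this is the argument the present paper generalizes in Proposition~\ref{fflvlinind}. Swap your counting step for this leading-term independence argument and the proof is on solid ground.
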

Note that the order of the factors in the above product does not matter in view of the abelianity of $\mathfrak n_-^a$. The following is easily seen to follow.
\begin{cor}
Each set of the form 
$$\left\{\left(\prod_{i,j} f_{i,j}^{T_{i,j}}\right) v_\la\mid  T\in\Pi_\la\right\},$$ 
where the order of the factors is chosen arbitrarily for each $T$, constitutes a basis in $L_\la$.
\end{cor}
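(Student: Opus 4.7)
The plan is to deduce this corollary directly from the theorem just stated by using the PBW filtration to pass between $L_\la$ and $L_\la^a$. The key input is the standard fact: if $V$ is a filtered vector space such that $\dim V = \dim \gr V < \infty$, and if $\{v_\alpha\}\subset V$ is a collection whose images in $\gr V$ form a basis of $\gr V$, then $\{v_\alpha\}$ itself is a basis of $V$.

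First, I would fix an arbitrary ordering of the root vectors $f_{i,j}$ for each $T\in\Pi_\la$ and consider the element
$$m_T = \left(\prod_{i,j} f_{i,j}^{T_{i,j}}\right) v_\la \in L_\la,$$
where the product is taken in the chosen order. Each $m_T$ lies in the PBW filtration piece $(L_\la)^a_{|T|}$, where $|T|=\sum_{i,j} T_{i,j}$. The next step is to identify the image of $m_T$ in the associated graded $L_\la^a = \gr L_\la$. Reordering the factors in the product inside $\mathcal U(\fn_-)$ using the commutation relations (\ref{comm}) changes the expression only by terms of strictly smaller PBW degree, because each commutator $[f_{i,j},f_{k,l}]$ is either zero or a single $f_{i,l}$, which has PBW degree one (whereas the two factors being commuted contribute degree two). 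Therefore, modulo $(L_\la)^a_{|T|-1}$, the element $m_T$ is independent of the order of the factors, and its image in $\gr L_\la = L_\la^a$ is precisely
$$\left(\prod_{i,j} (f_{i,j}^a)^{T_{i,j}}\right) v_\la^a.$$

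By the preceding theorem, these images form a basis of $L_\la^a$ as $T$ ranges over $\Pi_\la$. Since $\dim L_\la = \dim L_\la^a$ (both spaces have the same dimension, as $L_\la^a$ is the associated graded of the finite-dimensional filtered space $L_\la$ with respect to a finite filtration), the standard lifting lemma on filtered vector spaces recalled above implies that the $m_T$ themselves form a basis of $L_\la$.

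There is really no main obstacle here beyond being careful that the arbitrary choice of ordering is harmless: this is precisely the observation that commutators in $\fn_-$ lower PBW degree by one, which is built into the very definition of the PBW filtration and the associated graded abelianization $\fn_-^a$. Once this point is made, the statement is a one-line application of the theorem combined with a dimension count.
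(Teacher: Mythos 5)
Your proof is correct and matches the argument the paper leaves implicit (it only says the corollary is ``easily seen to follow''): pass to the associated graded, observe that the image of each ordered monomial $m_T$ in $\gr L_\la = L_\la^a$ is the corresponding FFLV basis element $\bigl(\prod (f_{i,j}^a)^{T_{i,j}}\bigr)v_\la^a$ regardless of the chosen ordering, and lift via the standard fact that a set mapping to a basis of the associated graded of a finite, exhaustive filtration is itself a basis. One small remark: you do not need to separately argue that $m_T\notin (L_\la)^a_{|T|-1}$, since the theorem already tells you the corresponding FFLV vector is nonzero, which is exactly that statement.
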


Next we define the {\it FFLV polytope} $Q_\lambda$. The polytope is contained in $\mathbb R^{n \choose 2}$ with coordinates 
enumerated by pairs $1\le i<j\le n$. A point $x=(x_{i,j})$ in this space is visualized as a number triangle in the same exact 
fashion as the arrays comprising $\Pi_\lambda$. We have $x\in Q_\lambda$ if and only if all $x_{i,j}\ge 0$ and for any Dyck path 
$d$ one has 
$$S(x,d)\le M(\lambda,d).$$ We see that $Q_\lambda$ is indeed a convex polytope and $\Pi_\lambda$ is precisely the set of integer 
points therein.

An obvious but important property of the FFLV polytopes is $Q_\la+Q_\mu=Q_{\la+\mu}$ (Minkowski sum) for any pair of integral dominant weights. A much less obvious property proved in~\cite{FFL1} is the following {\it Minkowski property}.
\begin{lemma}\label{minkowski}
For any integral dominant weights $\lambda$ and $\mu$, one has $\Pi_\lambda+\Pi_\mu=\Pi_{\lambda+\mu}$, 
where $+$ is the Minkowski sum of sets.
\end{lemma}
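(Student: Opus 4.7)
The plan is to prove the two inclusions $\Pi_\lambda + \Pi_\mu \subseteq \Pi_{\lambda+\mu}$ and $\Pi_{\lambda+\mu} \subseteq \Pi_\lambda + \Pi_\mu$ separately. The first is immediate: for any $T \in \Pi_\lambda$ and $U \in \Pi_\mu$, the array $T + U$ has nonnegative integer entries, and for every Dyck path $d$ one has $S(T+U, d) = S(T,d) + S(U,d) \le M(\lambda,d) + M(\mu,d) = M(\lambda+\mu, d)$, the last equality by the linearity of $M(\cdot,d)$ in its first argument.

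For the nontrivial inclusion $\Pi_{\lambda+\mu} \subseteq \Pi_\lambda + \Pi_\mu$, I would first reduce to the base case where $\mu$ is a fundamental weight $\omega_k$. Granted the base case for all $\lambda$, one writes $\mu = \omega_{k_1} + \dots + \omega_{k_r}$ and peels off fundamental weights iteratively:
$$\Pi_{\lambda+\mu} \subseteq \Pi_{\lambda+\mu-\omega_{k_1}} + \Pi_{\omega_{k_1}} \subseteq \dots \subseteq \Pi_\lambda + \Pi_{\omega_{k_1}} + \dots + \Pi_{\omega_{k_r}} \subseteq \Pi_\lambda + \Pi_\mu,$$
where the final containment is the easy direction applied repeatedly.

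For the base case, given $T \in \Pi_{\lambda+\omega_k}$ I must construct $U \in \Pi_{\omega_k}$ with $T - U \in \Pi_\lambda$. The elements of $\Pi_{\omega_k}$ are in bijection with $k$-subsets $J \subseteq \{1,\ldots,n\}$: $U_J$ has a $1$ at each pair $(i_l, j_l)$ coming from the order-preserving pairing of $\{1,\ldots,k\} \setminus J$ with $J \setminus \{1,\ldots,k\}$, and zeros elsewhere. Since $M(\omega_k, d)$ is $1$ when the Dyck path $d$ covers the index $k$ (i.e.\ $i_1 \le k \le i_N$) and $0$ otherwise, the requirement $T - U_J \in \Pi_\lambda$ reduces to $U_J \le T$ entrywise, together with $S(U_J, d) = 1$ for every Dyck path $d$ covering $k$ that is tight for $T$ with respect to $\lambda + \omega_k$. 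I would construct $J$ greedily: enumerate the tight Dyck paths of $T$ in a natural order (e.g.\ by the endpoints $(i_1, i_N)$) and, at each step, pick an entry $(i,j)$ on the current path with $T_{i,j} > 0$ that is compatible with the order-preserving pairing forced by prior choices.

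The main obstacle is verifying that this greedy construction always succeeds: one must show that a compatible choice exists at every step and that the resulting $J$ yields a $U_J$ simultaneously tight on every tight Dyck path of $T$ covering $k$. This requires a careful combinatorial analysis of how tight Dyck paths can overlap, exploiting the explicit form of the Dyck-path inequalities defining $\Pi_\lambda$. As an alternative that bypasses the explicit greedy argument, I would consider realizing $L_{\lambda+\omega_k}^a$ as the cyclic $\mathcal{S}^*(\mathfrak{n}_-)$-submodule of $L_\lambda^a \otimes L_{\omega_k}^a$ generated by $v_\lambda^a \otimes v_{\omega_k}^a$ and then, using the Leibniz-type expansion of PBW monomials on the tensor product, read off a decomposition of FFLV basis vectors of $L_{\lambda+\omega_k}^a$ into tensor products of FFLV basis vectors of the factors; this route trades the combinatorics for a dimension/flat-degeneration argument but requires an a priori identification of the relevant cyclic submodule with $L_{\lambda+\omega_k}^a$.
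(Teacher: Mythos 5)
The paper does not actually prove Lemma~\ref{minkowski}: it is quoted from \cite{FFL1} as a known result, so there is no in-paper argument to compare yours against. Judged on its own terms, your proposal correctly handles the easy inclusion $\Pi_\lambda+\Pi_\mu\subseteq\Pi_{\lambda+\mu}$ and the reduction to $\mu=\omega_k$, but the base case --- which is the entire substance of the lemma --- is left as a sketch with the key verification explicitly unperformed. You write yourself that ``one must show that a compatible choice exists at every step and that the resulting $J$ yields a $U_J$ simultaneously tight on every tight Dyck path,'' and that this ``requires a careful combinatorial analysis.'' That analysis \emph{is} the theorem; without carrying it out the proposal is a plan, not a proof. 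The alternative route you mention (realizing $L^a_{\lambda+\omega_k}$ inside $L^a_\lambda\otimes L^a_{\omega_k}$) has the same circularity problem you flag: identifying the cyclic submodule with $L^a_{\lambda+\omega_k}$ and reading off a compatible decomposition of FFLV vectors is essentially equivalent to the Minkowski statement, so it does not give an independent route.

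There is also a small but substantive slip in your description of $\Pi_{\omega_k}$: you describe $U_J$ via the ``order-preserving pairing'' of $\{1,\ldots,k\}\setminus J$ with $J\setminus\{1,\ldots,k\}$, but the correct pairing is order-\emph{reversing} --- the $l$-th smallest $p_l$ of the first set is matched with the $l$-th largest $q_l$ of the second --- precisely so that the pairs $(p_l,q_l)$ form an antichain, as the paper's characterization of $\Pi_{\omega_k}$ demands; an order-preserving pairing would yield a chain and $U_J\notin\Pi_{\omega_k}$. For what it's worth, the greedy idea itself is in the right spirit: the paper's own construction in the proof of Lemma~\ref{fflvtab} peels off a fundamental weight from $T$ by taking the maximal antichain in the appropriate rectangle within the support of $T$, and the nontrivial claim there, ``$T-T^1\in\Pi_{\lambda-\omega_{\lambda_1'}}$,'' is the same verification you identify as the obstacle. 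You would need to actually do the Dyck-path case analysis (as in \cite{FFL1}) to close the gap.
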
  

\subsection{PBW Young tableaux}\label{tableaux}

With the weight $\la$ let us associate the integers $\la_i=a_i+\ldots+a_{n-1}$ for $1\le i\le n-1$. These $\la_i$ comprise a non-increasing sequence of nonnegative integers and thus define a Young diagram (always in English notation) which we will also denote $\la$. Note that $a_i$ is precisely the number of columns of height $i$ in $\la$.

Consider $Y$, a filling of $\la$ (Young tableau of shape $\la$), we will denote $Y_{i,j}$ its element in the $i$th row and $j$th column. We say that $Y$ is a {\it PBW Young tableau} (or ``PBW tableau'' for short) if all of its elements are integers from $[1,n]$ and for every $1\le j\le \la_1$ the following hold (here $\la'_j$ denotes the height of the $j$th column in $\la$).
\begin{enumerate}
\item $Y_{i,j}\neq Y_{k,j}$ for any $1\le i\neq k\le \la'_j$.
\item For any $1\le i\le\la'_j$ if $Y_{i,j}\le\la'_j$, then $Y_{i,j}=i$.
\item For any $1\le i\neq k\le \la'_j$ if $Y_{i,j}>Y_{k,j}>\la'_j$, then $i<k$.
\end{enumerate}

We say that $Y$ is a PBW {\it semistandard} Young tableau (or ``PBW SSYT'' for short) if, apart from the above three conditions, 
whenever $j>1$ we also have
\begin{enumerate}\setcounter{enumi}{3}
\item For every $1\le i\le \la'_j$ there exists a $i\le k\le \la'_{j-1}$ such that $Y_{k,j-1}\ge Y_{i,j}$.
\end{enumerate}
We denote $\mathcal Y_\la$ the set of all PBW SSYTs of shape $\la$.

Let $Z$ be a PBW tableau of shape $\om_{d_j}$ for some $1\le j\le s$, i.e. consisting of one column of height $d_j$. Let 
$i_1<\ldots<i_{d_j}$ be the elements of $Z$ reordered increasingly. We denote $X(Z)=X_{i_1,\ldots,i_{d_j}}$. Next, for a PBW tableau 
$Y$ of shape $\la$ we denote $$X(Y)=\prod_{Z:\text{ column of }Y}X(Z)\in R_{\bf d}.$$ We will denote $X^a(Y)$ the image of 
$X(Y)$ under the isomorphism $\varphi$.

If $\mu$ is a $\mathbb Z_{\ge 0}$-linear combination of $\omega_{d_1},\ldots,\omega_{d_j}$, then the monomials $X(Y)$ with $Y$ 
ranging over all PBW tableaux of shape $\mu$ comprise a basis in the component of $R_{\bf d}$ 
of homogeneity degree $\mu$. We consider the ring $R_{\bf d}^a$ to also be $\mathfrak h^*$-graded (the grading being induced by $\varphi$) 
and the previous statement, also holds with $^a$ appended where necessary.

\begin{theorem}[\cite{Fe1}]\label{abpbwssyt}
For $\mu$ as above, the images of the monomials $X^a(Y)$ with $Y$ ranging over $\mathcal Y_\mu$ comprise a basis in the component of $R_{\bf d}^a/I_{\bf d}^a$ of homogeneity degree $\mu$.
\end{theorem}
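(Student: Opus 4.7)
The strategy is a Hilbert-function computation plus a straightening algorithm, closed by a cardinality bound. First I would observe that $I_{\bf d}^a=\initial_{\grad^a}(\varphi(I_{\bf d}))$ is an initial ideal, hence passing from $\varphi(I_{\bf d})$ to $I_{\bf d}^a$ preserves the $\mathfrak h^*$-graded Hilbert function. Combined with the classical identity $\dim(R_{\bf d}/I_{\bf d})_\mu=\dim L_\mu$ recalled after Theorem~\ref{classicalplucker}, this yields $\dim(R_{\bf d}^a/I_{\bf d}^a)_\mu=\dim L_\mu$, which by the FFLV basis theorem together with the Minkowski property of Lemma~\ref{minkowski} equals $|\Pi_\mu|$.

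Next I would show that the monomials $\{X^a(Y):Y\in\mathcal Y_\mu\}$ span $(R_{\bf d}^a/I_{\bf d}^a)_\mu$. Every weight-$\mu$ monomial in the generators of $R_{\bf d}^a$ has the form $X^a(Y)$ for some (not necessarily semistandard) PBW tableau $Y$ of shape $\mu$, so it is enough to rewrite $X^a(Y)$ modulo $I_{\bf d}^a$ as a $\mathbb C$-linear combination of $X^a(Y')$ with $Y'\in\mathcal Y_\mu$. By Proposition~\ref{abquadratic}, $I_{\bf d}^a$ is generated by the initial parts (with respect to $\grad^a$) of the quadratic Plücker relations~(\ref{plucker}). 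I would fix a well-founded total order $\prec$ on PBW tableaux of shape $\mu$ and prove that whenever some pair of adjacent columns of $Y$ violates the semistandard condition~(4), the corresponding initial Plücker relation rewrites $X^a(Y)$ as a combination of strictly $\prec$-larger $X^a(Y')$; iterating then terminates inside $\mathcal Y_\mu$.

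The last step is to bound $|\mathcal Y_\mu|$ from above. I would send a PBW SSYT $Y$ to the Minkowski sum of the FFLV patterns attached to its individual columns: each column of height $d_j$ is a $d_j$-subset of $[1,n]$, hence a point in $\Pi_{\omega_{d_j}}$, and the sum lies in $\Pi_\mu$ by Lemma~\ref{minkowski}. The semistandard axiom~(4) is precisely what forces this map $\mathcal Y_\mu\to\Pi_\mu$ to be injective. Combined with the spanning already established and the Hilbert-function equality this gives $|\mathcal Y_\mu|=|\Pi_\mu|=\dim(R_{\bf d}^a/I_{\bf d}^a)_\mu$, so the spanning family is in fact a basis.

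The main obstacle will be the straightening step of the second paragraph. For the non-degenerate Plücker relations, analogous rewriting procedures for Young tableaux are standard, but after truncating to initial parts with respect to $\grad^a$ many classical cancellations disappear and only $\grad^a$-minimal monomials survive on each side; one must therefore choose the order $\prec$ so that, for each initial quadratic relation witnessing a violation of~(4), at least one of the surviving right-hand-side terms is strictly $\prec$-larger than $X^a(Y)$, and verify that iterated application terminates. The explicit shape of the initial Plücker relations, together with the behaviour of the grading $s^a_{i_1,\ldots,i_k}$ on the FFLV basis, should be the key inputs that make this feasible.
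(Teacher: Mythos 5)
Your overall architecture---Hilbert-function equality, a spanning argument via straightening, and a cardinality bound via injectivity into $\Pi_\mu$---is logically sound, and your first and third steps coincide with ingredients the paper uses: the Hilbert-function argument is the standard one, and the injectivity of $Y\mapsto\sum_j\tau(Z_j)$ is exactly Lemma~\ref{fflvtab}. But the second step is not what the paper does, and it carries a genuine gap.

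The paper proves Theorem~\ref{abpbwssyt} as the special case $A=a$ of Corollary~\ref{Apbwssyt}, which it deduces not by a spanning argument but by \emph{linear independence}: at the end of the proof of Theorem~\ref{main} one evaluates each $X^A(Y)$ under the universal parametrization $\psi^A$ and shows that the resulting polynomial in $\mathbb{C}[\{z_{i,j},z_l\}]$ has a unique $\preceq$-minimal monomial $\prod z_l^{b_l}\prod z_{i,j}^{\tau(Y)_{i,j}}$; these minimal monomials are pairwise distinct for $Y\in\mathcal Y_\mu$ because $\tau$ is a bijection onto $\Pi_\mu$. Together with $\dim(R_{\bf d}^A/I_{\bf d}^A)_\mu=\dim L_\mu=|\Pi_\mu|$ this gives a basis, with no rewriting needed. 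That route is the ``dual'' of yours (independence plus matching size instead of spanning plus a size bound) and entirely avoids the straightening.

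The difficulty with your straightening step is more basic than the termination issue you flag. After passing to $\grad^a$-initial parts, the distinguished ``first'' monomial $X_IX_J$ of a Plücker relation~(\ref{plucker}) need not survive. Already for $n=3$, $p=2$, $q=1$, with $(i_1,i_2)=(1,3)$ and $j_1=2$, the relation is $X_{13}X_2-X_{23}X_1-X_{12}X_3$; here $\grad^a(X_{13}X_2)=2$ while $\grad^a(X_{23}X_1)=\grad^a(X_{12}X_3)=1$, so the initial part is $-X^a_{23}X^a_1-X^a_{12}X^a_3$ and the leading product $X^a_{13}X^a_2$ has been dropped. Hence it is not automatic that a given non-semistandard product appears as a $\grad^a$-minimal term of \emph{some} Plücker relation: that must be proved before any rewriting can start. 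Moreover, for termination you need \emph{every} surviving replacement term to be strictly $\prec$-better, not merely ``at least one'' as you state. The paper does carry out a straightening argument, but only to prove quadratic generation (Proposition~\ref{Aquadratic}): it first specializes to the toric case in the interior of $\mathcal K$, where the initial relations degenerate to binomials $X^A_IX^A_J\pm X^A_{I'}X^A_{J'}$ and both points become trivial, and then reduces the general (including abelian) case to the toric one via Proposition~\ref{ABinitial}. If you want to pursue your spanning route, that detour through the toric case is the robust way to close the gap.
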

In particular, since the corresponding homogeneous components of a homogeneous ideal and of its initial part with respect to some grading have the same dimension, we see that $|\mathcal Y_\la|=\dim L_\la$.

We point out that the analogous statement for the non-degenerate case also holds. This is not found in~\cite{Fe1} but will follow from the more general Corollary~\ref{Apbwssyt}.
\begin{theorem}
For $\mu$ as above, the images of the monomials $X(Y)$ with $Y$ ranging over $\mathcal Y_\mu$ comprise a basis in the component of $R_{\bf d}/I_{\bf d}$ of homogeneity degree $\mu$.
\end{theorem}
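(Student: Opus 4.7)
The plan is to derive this statement as the non-degenerate shadow of Theorem \ref{abpbwssyt} via a standard ``basis under degeneration'' principle; the paper even forecasts that it will be subsumed by the more general Corollary \ref{Apbwssyt}, but a direct argument is short.

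First I would match cardinalities: by Theorem \ref{classicalplucker} and the dimension comment following it, $\dim(R_{\bf d}/I_{\bf d})_\mu = \dim L_\mu$, while Theorem \ref{abpbwssyt} (whose corollary already notes $|\mathcal Y_\la|=\dim L_\la$) gives $|\mathcal Y_\mu| = \dim L_\mu$. So it suffices to prove that the images of $\{X(Y) : Y \in \mathcal Y_\mu\}$ are linearly independent in the degree-$\mu$ component of $R_{\bf d}/I_{\bf d}$.

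Next I would transfer the problem to $R_{\bf d}^a$ via the ring isomorphism $\varphi$. A hypothetical relation $\sum_{Y \in \mathcal Y_\mu} c_Y X(Y) \in I_{\bf d}$ becomes $\sum c_Y X^a(Y) \in \varphi(I_{\bf d})$. Each $X^a(Y)$ is a single monomial in $R_{\bf d}^a$, hence $\grad^a$-homogeneous of degree $\sum_Z s^a_Z$, summed over the columns $Z$ of $Y$. Letting $m$ be the smallest value of $\grad^a X^a(Y)$ among $Y$ with $c_Y \neq 0$, the $\grad^a = m$ component of the relation coincides with the initial part $\initial_{\grad^a}\bigl(\sum c_Y X^a(Y)\bigr)$ and therefore lies in $I_{\bf d}^a$ by definition of the initial ideal. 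By Theorem \ref{abpbwssyt}, the images of $X^a(Y)$ for $Y \in \mathcal Y_\mu$ are linearly independent in $R_{\bf d}^a / I_{\bf d}^a$, which forces all $c_Y$ with $\grad^a X^a(Y) = m$ to vanish. This contradicts the minimality of $m$ unless every $c_Y$ is zero.

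I do not expect any serious obstacle: the argument only uses that taking $\grad^a$-initial parts preserves homogeneous Hilbert functions and that each $X^a(Y)$, being a monomial, is its own $\grad^a$-initial form (so the minimal slice captured by $\initial_{\grad^a}$ is an honest $\mathbb{C}$-linear combination of the $X^a(Y)$'s). A small sanity check worth including is that $\varphi$ preserves the $\mathfrak h^*$-grading, so the dimension comparison in the first paragraph is legitimate component by component. Combined with the cardinality count, linear independence yields the desired basis.
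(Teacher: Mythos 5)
Your argument is correct, and it takes a genuinely different route from the paper. The paper simply defers to Corollary~\ref{Apbwssyt} and obtains the statement by specializing the weighted theory to $A=0$ (so that $R_{\bf d}^A = R_{\bf d}$, $I_{\bf d}^A = I_{\bf d}$, and $X^A(Y) = X(Y)$); this requires the whole apparatus of Sections~\ref{wPBW}--\ref{proof}. You instead pass ``upward'' from the abelian case: you use the Hilbert-function match between $R_{\bf d}/I_{\bf d}$ and $R_{\bf d}^a/I_{\bf d}^a$ together with Theorem~\ref{abpbwssyt} to reduce to linear independence, and then run the standard initial-ideal argument --- if $\sum c_Y X(Y) \in I_{\bf d}$ with some $c_Y \neq 0$, the $\grad^a$-lowest slice of $\sum c_Y X^a(Y)$ is a nonzero combination of the $X^a(Y)$ lying in $I_{\bf d}^a$, contradicting their independence in $R_{\bf d}^a/I_{\bf d}^a$. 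This is the classical principle that a set of monomials whose images give a basis of $R/\initial(I)$ also gives a basis of $R/I$, and it relies only on the results of Section~\ref{prelim} (quoted from~\cite{Fe1}), so it is considerably more self-contained. The trade-off is that the paper's route establishes the analogous statement uniformly for every $A \in \mathcal K$, whereas your argument handles only the non-degenerate quotient; still, as a proof of the specific theorem at hand, your version is cleaner and logically leaner. One small point worth making explicit if you write this up: the monomials $X^a(Y)$ for distinct $Y$ are distinct monomials in $R_{\bf d}^a$, so the putative relation $\sum c_Y X^a(Y)$ is a genuinely nonzero element of $\varphi(I_{\bf d})$, which is what justifies taking its initial part.
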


\begin{remark}\label{lattice}
One may also give the definition of PBW SSYTs in terms of a certain partial order. Consider the poset $\mathcal P$ the elements of which are nonempty proper subsets of $[1,n]$ and the order relation is defined as follows. For $x,y\in\mathcal P$ we set $x\preceq y$ whenever $|x|\ge|y|$ and the (unique) PBW tableau with two columns such that the $x$ is the set of elements in its first column and $y$ is the set of elements in its second column is PBW semistandard, i.e. satisfies conditions (2)-(4) above. (One may easily verify that this is indeed an order relation.) The above theorem then asserts that the monomials which are standard with respect to this partial order map to a basis in the homogeneous coordinate ring of $F_\la^a$.

The theory in this subsection should be reminiscent of the classical description of the coordinate ring $R_{\bf d}/I_{\bf d}$ in terms or semistandard Young tableaux which also provide bases in the homogeneous components and correspond to standard monomials with respect to a certain partial order (see~\cite{fulton,MiS} for details).
\end{remark}

To complete this section we prove a combinatorial fact that is not found in~\cite{Fe1}. First we introduce the partial order 
$\le$ on the set of all pairs $(i,j)$ with $1\le i<j\le n$ with $(i,j)\le (i',j')$ whenever $i\le i'$ and $j\le j'$. 
For $1\leq k\leq n-1$ we consider the set $\Pi_{\omega_k}$: it is comprised of all $T$ such that for $1\leq i<j\leq n$, 
(i) $T_{i,j}\in\{0,1\}$; (ii) if $T_{i,j}=1$ then $i\leq k$ and $j\geq k+1$; (iii) the set of pairs $(i,j)$ with $T_{i,j}=1$ 
forms an antichain with respect to $\le$.

For a PBW tableau $Z$ of shape $\om_k$ let us denote $\tau(Z)\in\Pi_{\om_k}$ the FFLV pattern with 
$$\tau(Z)_{i,j}=\begin{cases}1, & \text{ if }Z_{i,1}=j\text{ and }j>i;\\ 0, & \text{ otherwise.}\end{cases}$$ 
The above description of $\Pi_{\om_k}$ shows that 
this gives us a bijection between $\mathcal Y_{\om_k}$ and $\Pi_{\om_k}$. In fact, one sees that the element of the FFLV basis 
corresponding to $T$ is $e_{Z_{1,1}}\wedge\ldots\wedge e_{Z_{k,1}}$. 

We now define $\tau$ on any PBW tableau $Y$ of shape $\la$ by denoting $Z_i$ the PBW tableau of shape $\om_{\la'_i}$ found in the $i$th column of $Y$ and setting $\tau(Y)=\sum_{i=1}^{\la_1} \tau(Z_i)$.
\begin{lemma}\label{fflvtab}
When restricted to the set $\mathcal Y_\la$ the map $\tau$ provides a bijection onto $\Pi_\la$.
\end{lemma}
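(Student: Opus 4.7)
\emph{Proof plan.} I would prove the lemma in three steps: (i) $\tau$ maps $\mathcal Y_\la$ into $\Pi_\la$; (ii) $|\mathcal Y_\la|=|\Pi_\la|$; and (iii) either injectivity or surjectivity of the restricted map. For (i), let $Y\in\mathcal Y_\la$ have columns $Z_1,\dots,Z_{\la_1}$. Each $\tau(Z_i)$ lies in $\Pi_{\om_{\la'_i}}$ by the one-column bijection recorded just before the statement. Since the Dyck path bound $M(\mu,d)$ is additive in $\mu$ and $\la=\sum_i\om_{\la'_i}$, summing the inequalities $S(\tau(Z_i),d)\le M(\om_{\la'_i},d)$ yields $S(\tau(Y),d)\le M(\la,d)$ for every Dyck path $d$; nonnegativity of entries is preserved under sums, so $\tau(Y)\in\Pi_\la$.

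For (ii), setting $\mu=\la$ in Theorem~\ref{abpbwssyt} gives $|\mathcal Y_\la|=\dim(R_{\bf d}^a/I_{\bf d}^a)_\la$, which equals $\dim(R_{\bf d}/I_{\bf d})_\la=\dim L_\la$ since passage to an initial ideal preserves Hilbert series and by the remark after Theorem~\ref{classicalplucker}. On the other side, the FFLV basis theorem from~\cite{FFL1} gives $|\Pi_\la|=\dim L_\la^a=\dim L_\la$. Hence $|\mathcal Y_\la|=|\Pi_\la|$, so it suffices to establish just one of injectivity or surjectivity.

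For (iii) I would argue surjectivity by a canonical right-to-left column extraction. Given $T\in\Pi_\la$, the Minkowski property (Lemma~\ref{minkowski}) lets me write $T=T'+T_{\la_1}$ with $T'\in\Pi_{\la-\om_{\la'_{\la_1}}}$ and $T_{\la_1}\in\Pi_{\om_{\la'_{\la_1}}}$; to fix the ambiguity I would choose $T_{\la_1}$ extremal with respect to the order $\preceq$ of Remark~\ref{lattice}, so that the corresponding rightmost column $Z_{\la_1}=\tau^{-1}(T_{\la_1})$ is as $\preceq$-small as possible among admissible candidates. Iterating on the shorter shape $\la-\om_{\la'_{\la_1}}$ produces a Young tableau $Y$ of shape $\la$ each of whose columns is a PBW tableau by construction; conditions (1)--(3) of the PBW SSYT definition therefore hold automatically.

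The main obstacle, and what I expect to be the technical core of the proof, is the compatibility condition (4) between consecutive columns produced by this extraction. I would reduce to the two-column situation $\la=\om_p+\om_q$ with $p\ge q$, where the claim becomes a direct comparison of two antichains, one in $\Pi_{\om_p}$ and one in $\Pi_{\om_q}$, whose sum is a prescribed element of $\Pi_{\om_p+\om_q}$. The antichain constraint built into the definition of $\Pi_{\om_k}$ combined with the extremality of the choice of rightmost column should force (4); making the translation between the $\preceq$-order and concrete Dyck path inequalities explicit, and verifying that the greedy extraction never produces a $\preceq$-incomparable pair, is the step that will require the most care.
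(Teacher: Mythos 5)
Your step (i) matches the paper: $\tau(\mathcal Y_\la)\subseteq\Pi_\la$ is deduced (in the paper, by directly invoking Lemma~\ref{minkowski}; in your plan, by summing Dyck-path inequalities, which is the easy inclusion of the Minkowski property). Your step (ii) is a valid and correct cardinality count, though the paper does not need it: the paper constructs an explicit two-sided inverse $\zeta$ rather than establishing a bijection by surjectivity plus counting.

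Where your plan genuinely diverges from the paper — and where it has a concrete gap — is in step (iii). The paper extracts the \emph{leftmost} (tallest) column first, taking $T^1$ to be the indicator of the set of $\le$-maximal pairs in the support of $T$ (with $\le$ the componentwise order on pairs $(i,j)$). This choice is automatically unique, automatically lies in $\Pi_{\om_{\la'_1}}$, automatically leaves $T-T^1\in\Pi_{\la-\om_{\la'_1}}$, and semistandardness of $\zeta(T)$ follows directly because nothing in $T-T^1$ can $\le$-dominate what was removed. You instead propose a right-to-left extraction, picking the rightmost column to be $\preceq$-\emph{small} in the order of Remark~\ref{lattice}. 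That direction is wrong. In that partial order, the columns of a PBW SSYT must \emph{increase}: $Z_1\preceq\cdots\preceq Z_{\la_1}$. So if you peel off $Z_{\la_1}$ first, you want it $\preceq$-\emph{large}, so that whatever column you later produce to its left has a chance of being $\preceq$ it. A small check confirms this: for $n=4$, $\la=2\om_2$, take $T$ with $T_{1,3}=T_{2,4}=1$ and all else $0$. The two admissible one-column splittings give $Z_2$ with content $\{2,3\}$ or $\{1,4\}$; one has $\{1,4\}\preceq\{2,3\}$, so $\{2,3\}$ is the $\preceq$-\emph{larger} of the two, and it is the one that yields a PBW SSYT, namely $Y=((1,4),(2,3))$. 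Choosing the $\preceq$-small candidate $\{1,4\}$ as the rightmost column gives $((2,3),(1,4))$, which is not PBW semistandard.

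Beyond the direction error, two further points remain unaddressed in your plan: $\preceq$ is only a partial order, so you must argue that the $\preceq$-extremal admissible choice exists and is unique (otherwise your extraction is not canonical); and the iterated verification of condition (4) across all consecutive column pairs is exactly the work the proof has to do — acknowledging it as ``the technical core'' is not a substitute for doing it. In the paper this verification is a one-line consequence of $\le$-maximality. I recommend you switch to extracting the leftmost column from the $\le$-maximal support elements; this sidesteps both the direction issue and the uniqueness issue and makes condition (4) immediate.
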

\begin{proof}
The fact that for $Y\in\mathcal Y_\la$ one has $\tau(Y)\in\Pi_\la$ is immediate from 
Lemma \ref{minkowski}.

Let us describe the inverse of $\tau$. Consider some $T\in\Pi_\la$ and consider $\la'_1=d_s$, i.e. the largest $i$ with $a_i>0$. 
Let $T^1\in\Pi_{\om_{\la'_1}}$ be such that $T^1_{i,j}=1$ whenever $i\le \la'_1$, $j\ge \la'_1+1$ and $(i,j)$ is maximal among all 
$(i',j')$ with $T_{i',j'}>0$ with respect to $\le$. We then observe that $T-T^1\in\Pi_{\la-\om_{\la'_1}}$ and define $T^2$ for the pair 
$\la-\om_{\la'_1}$, $T-T^1$ in the same way as $T^1$ was defined for the pair $\la$, $T$. By iterating this procedure we obtain a 
decomposition of $T$ into the sum $T^1+\ldots+T^{\la_1}$ with $T^i\in\Pi_{\om_{\la'_i}}$. 

Denote $Z_i$ the unique element of $\mathcal Y_{\om_{\la'_i}}$ with $\tau(Z_i)=T^i$ and denote $\zeta(T)$ the PBW tableau of shape $\la$ having $Z_i$ as its $i$th column. By definition, for any $1\le l<\la_1$ and for any $T^{l+1}_{i,j}=1$ one either has $j<\la'_l+1$ or one has $T^l_{i',j'}=1$ for some $(i',j')\ge (i,j)$. One easily checks that this is equivalent to the two-column tableau with first column $Z_l$ and second column $Z_{l+1}$ being PBW semistandard which shows that $\zeta(T)$ is PBW semistandard. It is now straightforward to verify that $\tau$ and $\zeta$ are mutually inverse.
\end{proof}

\section{Weighted PBW degenerations}\label{wPBW}

A weight system $A=(a_{i,j})_{1\leq i<j\leq n}$ is a collection of integers $a_{i,j}$ such that
\begin{enumerate}[label=(\alph*)]
\item $a_{i,i+1}+a_{i+1,i+2}\ge a_{i,i+2}$ for $1\le i\le n-2$ and
\item $a_{i,j}+a_{i+1,j+1}\ge a_{i,j+1}+a_{i+1,j}$ for $1\le i<j-1\le n-2$. 
\end{enumerate}
The reasons for these requirements should become evident below. 

We immediately derive a larger set of inequalities.
\begin{proposition}\label{moreineqs}
We have
\begin{enumerate}[label=(\Alph*)]
\item $a_{i,j}+a_{j,k}\ge a_{i,k}$ for $1\le i<j<k\le n$ and
\item $a_{i,j}+a_{k,l}\ge a_{i,l}+a_{k,j}$ for $1\le i<k<j<l\le n$. 
\end{enumerate}
\end{proposition}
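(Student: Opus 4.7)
The plan is to prove (B) first, by a double induction using (b), and then to derive (A) by induction on $k-i$ using (B) and (a) in tandem.

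For (B), I would induct on the quantity $(k-i)+(l-j)$. The base case $(k-i)+(l-j)=2$ forces $k=i+1$ and $l=j+1$, which is precisely hypothesis (b). For the inductive step, assume without loss of generality (by the symmetry $i\leftrightarrow j$, $k\leftrightarrow l$ of the claim) that $k-i\ge 2$. Then the inductive hypothesis applied to the four indices $i<k-1<j<l$ yields
\[
a_{i,j}+a_{k-1,l}\ge a_{i,l}+a_{k-1,j},
\]
and applied to $k-1<k<j<l$ yields
\[
a_{k-1,j}+a_{k,l}\ge a_{k-1,l}+a_{k,j}.
\]
Both applications are legal since each strictly decreases the measure (the first by shrinking $k-i$ to $k-1-i\ge 1$, the second because $k-(k-1)=1<k-i$). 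Summing these inequalities and cancelling $a_{k-1,l}$ and $a_{k-1,j}$ gives (B).

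For (A), I would induct on $k-i$, with base case $k-i=2$ (so $j=i+1$) supplied directly by (a). For $k-i\ge 3$, the right strategy depends on where $j$ sits. If $j\ge i+2$, apply (B) to the indices $i<i+1<j<k$ to get $a_{i,j}+a_{i+1,k}\ge a_{i,k}+a_{i+1,j}$, and apply the inductive hypothesis of (A) to the shorter triple $(i+1,j,k)$ to obtain $a_{i+1,j}+a_{j,k}\ge a_{i+1,k}$; eliminating $a_{i+1,k}-a_{i+1,j}$ between the two yields $a_{i,j}+a_{j,k}\ge a_{i,k}$. The remaining case $j=i+1$ is handled symmetrically from the other end: apply (B) to $i<i+1<k-1<k$ together with the inductive hypothesis of (A) on the triple $(i,i+1,k-1)$, and again eliminate the common terms.

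The only real subtlety, and the thing that has to be checked carefully, is the bookkeeping of the induction: one must verify in every invocation that the relevant measure strictly decreases, which is what forces the two-case split at $j=i+1$ versus $j>i+1$ in the proof of (A) — a single uniform reduction through $i+1$ would fail precisely when $j=i+1$. Once this bookkeeping is set up correctly, the remaining manipulations are just additive rearrangements.
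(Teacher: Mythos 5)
Your proof is correct, but the route is different from the paper's. The paper gives a one-shot telescoping argument: it writes $a_{i,j}+a_{k,l}-a_{i,l}-a_{k,j}$ as the sum of all the hypothesis-(b) inequalities over the full rectangle $i\le i'\le k-1$, $j\le j'\le l-1$ (this is just the mixed second difference summed over a grid), and similarly obtains (A) as the sum of one instance of (a) (for $i'=j-1$, $j'=j$, where $a_{j,j}$ would otherwise appear) plus the remaining (b)'s over the rectangle $i\le i'\le j-1$, $j\le j'\le k-1$. Your double induction is in essence this same telescoping, just unrolled recursively rather than written as an explicit finite sum; what the paper's version buys is that the decomposition is exhibited in closed form (which is later exploited in Proposition~\ref{facesame}, where one needs to know exactly which (a)'s and (b)'s enter each (A) and (B)), while your version is arguably easier to verify term by term without bookkeeping errors. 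One small quibble: the parenthetical ``symmetry $i\leftrightarrow j$, $k\leftrightarrow l$'' is not quite a symmetry of the chain constraint $i<k<j<l$; what actually justifies the WLOG is either the reflection $m\mapsto n+1-m$ of the index set (under which hypotheses (a) and (b) are stable and which swaps $k-i$ with $l-j$), or simply running the mirror-image reduction through $l-1$ in the case $l-j\ge 2$. The substance of the induction is unaffected.
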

\begin{proof}
In (a) and (b) replace $i$ with $i'$ and $j$ with $j'$.
Then the inequality in (A) can be obtained as the sum of the inequality in (a) for $i'=j-1$ and the inequalities in (b) for all 
$i\le i'\le j-1$ and $j\le j'\le k-1$ other than $i'=j-1$, $j'=j$.

The inequality in (B) can be obtained as the sum of the inequalities in (b) for all $i\le i'\le k-1$ and $j\le j'\le l-1$.
\end{proof}

The weight system allows us to define a $\mathbb Z$-filtration of the Lie algebra $\mathfrak n_-$ by setting for 
$m\in\mathbb Z$ $$(\mathfrak n_-)_m=\spanned(\{f_{i,j}\mid a_{i,j}\le m\}).$$ Condition (A) above together with the commutation 
relations~(\ref{comm}) ensure that we indeed have a filtered Lie algebra, i.e. $[(\mathfrak n_-)_l,(\mathfrak n_-)_m]\subset (\mathfrak n_-)_{l+m}$. Let $\mathfrak{n}_-^A$ be the associated graded Lie algebra 
$$\mathfrak n_-^A=\bigoplus_{m\in\mathbb Z} (\mathfrak n_-)_m/(\mathfrak n_-)_{m-1}=\bigoplus_{m\in\mathbb Z} (\mathfrak n_-^A)_m.$$

The Lie algebra  $\mathfrak n_-^A$ is spanned by elements $f_{i,j}^A$ with $f_{i,j}^A$ being the image of $f_{i,j}\in(\mathfrak n_-)_{a_{i,j}}$ in $(\mathfrak n_-^A)_{a_{i,j}}$. The commutation relations in $\mathfrak n_-^A$ are then given by: for $i\leq k$,
\begin{equation}\label{Acomm}
[f_{i,j}^A,f_{k,l}^A]=
\begin{cases}
f_{i,l}^A,\text{ if }j=k\text{ and }a_{i,j}+a_{k,l}=a_{i,l};\\
0, \text{ otherwise.}
\end{cases}
\end{equation}

For a PBW monomial $M=f_{i_1,j_1}\ldots f_{i_N,j_N}\in\mU$ define its $A$-degree to be $\deg^A M=a_{i_1,j_1}+\ldots+a_{i_N,j_N}$. We obtain a $\mathbb Z$-filtration on $\mU$ with the $m$th component being $$\mU_m=\spanned(\{M=f_{i_1,j_1}\ldots f_{i_N,j_N}\mid \deg^A M\le m\}).$$ 
We obtain a filtered algebra structure on $\mathcal{U}$ and denote the associated graded algebra $\mU^A=\gr\mU$. 
We will denote the $m$th homogeneous component $\mU_m/\mU_{m-1}=\mU^A_m$.

For every pair $1\le i<j\le n$ we may consider the element $f_{i,j}^A\in\mU^A_{a_{i,j}}$ which is the image of $f_{i,j}\in\mU_{a_{i,j}}$ (the reason for the apparent conflict of notations will become clear in a moment). It is evident that these $f_{i,j}^A\in\mU^A$ satisfy the commutation relations~(\ref{Acomm}) with the commutator being induced by the multiplication in $\mU^A$. Furthermore, the PBW theorem implies that these elements generate the algebra $\mU^A$. The universal property of $\mU(\mathfrak n_-^A)$ now shows that we have a surjective homomorphism from $\mU(\mathfrak n_-^A)$ to $\mU^A$ mapping $f_{i,j}^A\in\mathfrak n_-^A\subset\mU(\mathfrak n_-^A)$ to $f_{i,j}^A\in\mU^A$.
\begin{proposition}
The above homomorphism is an isomorphism.
\end{proposition}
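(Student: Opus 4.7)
The plan is to exhibit PBW-type bases on each side that the stated surjection matches termwise, and then conclude the map is an isomorphism by comparing graded dimensions.

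First, fix any total order on the set of pairs $\{(i,j)\mid 1\le i<j\le n\}$. The classical PBW theorem for $\mathfrak{n}_-$ provides a basis of $\mathcal{U}$ consisting of the ordered monomials $M=f_{i_1,j_1}\cdots f_{i_N,j_N}$ (with the pairs weakly increasing in the fixed order); each such $M$ has an $A$-degree $\deg^A M=a_{i_1,j_1}+\ldots+a_{i_N,j_N}$. Since $\mathfrak{n}_-^A$ is a genuine Lie algebra under the bracket~(\ref{Acomm}), the PBW theorem applied to $\mathfrak{n}_-^A$ provides the analogous basis $M^A=f^A_{i_1,j_1}\cdots f^A_{i_N,j_N}$ of $\mathcal{U}(\mathfrak{n}_-^A)$. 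Declaring $\deg f^A_{i,j}=a_{i,j}$ promotes $\mathcal{U}(\mathfrak{n}_-^A)$ to a $\mathbb{Z}$-graded algebra in which $M^A$ sits in degree $\deg^A M$.

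The main step is to identify the component $\mathcal{U}^A_m$ with the span $V_m\subset\mathcal{U}$ of ordered monomials of $A$-degree exactly $m$. Any (possibly unordered) monomial $f_{i_1,j_1}\cdots f_{i_N,j_N}$ of $A$-degree $\le m$ can be reduced via~(\ref{comm}) to a linear combination of ordered monomials of $A$-degree $\le m$: each application of the bracket $[f_{i,j},f_{j,l}]=f_{i,l}$ replaces two generators by one and changes the $A$-degree from $a_{i,j}+a_{j,l}$ to $a_{i,l}$, which is a (weak) decrease by inequality (A) of Proposition~\ref{moreineqs}. Thus $\mathcal{U}_m$ is spanned by ordered monomials of $A$-degree $\le m$; since these are already linearly independent by PBW for $\mathfrak{n}_-$, we have the internal decomposition $\mathcal{U}_m=\bigoplus_{m'\le m}V_{m'}$, and the images of the elements of $V_m$ constitute a basis of the quotient $\mathcal{U}^A_m=\mathcal{U}_m/\mathcal{U}_{m-1}$.

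By construction the surjection $\mathcal{U}(\mathfrak{n}_-^A)\to\mathcal{U}^A$ sends the PBW basis element $M^A$ in degree $\deg^A M$ to the class of $M$ in $\mathcal{U}^A_{\deg^A M}$, i.e.\ it carries the basis $\{M^A\}$ bijectively onto the basis of $\mathcal{U}^A$ identified in the previous paragraph. A graded surjection that restricts to a bijection of bases in each graded piece is a graded isomorphism, completing the argument. The only real obstacle is the reordering step: one must confirm that applying~(\ref{comm}) inside any unordered monomial can never strictly raise its $A$-degree, which is precisely what conditions (a)--(b) in the definition of a weight system, upgraded to (A)--(B) in Proposition~\ref{moreineqs}, were arranged to guarantee.
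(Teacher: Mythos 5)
Your proof is correct and takes essentially the same route as the paper: both establish via PBW reordering and inequality (A) that the ordered monomials of $A$-degree at most $m$ span $\mathcal U_m$, then invoke the PBW basis on both sides to conclude. The only cosmetic difference is that you package the linear independence as a direct sum decomposition $\mathcal U_m=\bigoplus_{m'\le m}V_{m'}$, whereas the paper phrases it as a short contradiction argument; the substance is identical. (A tiny imprecision: only inequality (A) is actually needed here, not (B), as the paper itself remarks after this proposition.)
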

\begin{proof}
Consider an arbitrary linear ordering ``$<$'' of the $f_{i,j}\in\mU$. Let us show that for $m\in\mathbb Z$ the monomials 
$M=f_{i_1,j_1}\ldots f_{i_N,j_N}$ with $f_{i_1,j_1}\le\ldots\le f_{i_N,j_N}$ and $\deg^A M\le m$ span $\mU_m$. Indeed, consider an arbitrary monomial $L\in\mU$ with $\deg^A L\le m$. Now apply the commutation relations~(\ref{comm}) to reorder the $f_{i,j}$ in $L$ and all the appearing summands to express $L$ as linear combination of $M=f_{i_1,j_1}\ldots f_{i_N,j_N}$ with $f_{i_1,j_1}\le\ldots\le f_{i_N,j_N}$ (this is the standard PBW procedure). The inequalities in (A) ensure that all the monomials in the resulting expression are of degree no greater than $m$.

Next, consider the induced ordering of the $f_{i,j}^A\in\mU^A$ which we also denote ``$<$''. To prove the proposition 
it suffices to show that all products $f_{i_1,j_1}^A\ldots f_{i_N,j_N}^A\in\mU^A$ for which $f_{i_1,j_1}^A\le \ldots\le f_{i_N,j_N}^A$ 
are linearly independent in $\mU^A$. Indeed, consider $k$ such products $M_1^A,\ldots,M_k^A$ and suppose that they are linearly dependent. 
We may assume that all $M_i^A\in\mU^A_m$ for some $m$. Now consider the corresponding monomials $M_1,\ldots,M_k\in\mU$ where $M_i$ is 
obtained from $M_i^A$ by simply removing the $^A$ superscript from each $f_{i,j}^A$. We see that a linear combination of these 
$M_1,\ldots,M_k\in\mU$ lies in $\mU_{m-1}$ which contradicts the conclusion from the previous paragraph.
\end{proof}

Now let us consider the $\mathbb Z$-filtration of $L_\la$ given by $(L_\la)_m=\mU_m v_\la$. Denote the associated graded space 
$L_\la^A$ and its homogeneous components $(L_\la^A)_m$. Since for $l,m\in\mathbb Z$ we have $\mU_l(L_\la)_m\subset(L_\la)_{l+m}$, 
the space $L_\la^A$ is naturally a graded $\mU^A$-representation. This representation is the {\it weighted PBW degeneration} of $L_\la$ and will be often referred to as ``degenerate representation'' for brevity. 

Note that $v_\la\in (L_\la)_0$ and that $v_\la\notin (L_\la)_m$ for all $m<0$. Therefore, we may consider the ``highest weight vector'' 
$v_\la^A\in L_\la^A$ which is the image of $v_\la$ in $(L_\la^A)_0$. We then have $L_\la^A=\mU^A v_\la^A$ and $(L_\la^A)_m=\mU^A_m v_\la^A$.

Since $L_\la^A$ is a representation of $\mU^A$ and, consequently, of $\mathfrak n_-^A$, it is also a representation of the 
corresponding connected simply connected Lie group which we denote $N^A$. Then $N^A$ also acts on the projectivization 
$\mathbb P(L_\la^A)$. In this projectivization we may consider the point $u_\la^A$ corresponding to $\mathbb Cv_\la$ and the closure 
$F_\la^A=\overline{N^Au_\la^A}$. This subvariety will be referred to as the {\it weighted PBW degeneration} of the partial flag 
variety associated with $\la$ or, more compactly, the {\it degenerate flag variety}.

\begin{remark}
If $a_{i,j}=0$ for all $1\leq i<j\leq n$, we obtain the non-degenerate objects $\mathfrak n_-^A=\mathfrak n_-$,  $L_\la^A=L_\la$ and 
$F_\la^A=F_\la$. If $a_{i,j}=1$ for all $1\leq i<j\leq n$,  we obtain the abelian degenerations $\mathfrak n_-^a$, $L_\la^a$ and 
$F_\la^a$. These observations will be generalized via Proposition~\ref{facesame} (see the paragraph following the Proposition).
\end{remark}

\begin{remark}
We see that in order to define $\mathfrak n_-^A$, $L_\la^A$ and $F_\la^A$ we only make use of the inequalities in (A). 
However, the inequalities in (B) become crucial in the subsequent sections and, in particular, in the proofs of 
Lemma~\ref{tensor} and Theorem~\ref{main}.
\end{remark}

\section{Degenerate Pl\"ucker relations}\label{dPr}

For $1\le k\le n-1$ consider the fundamental representation $L_{\om_k}$ (and recall its properties specified in 
subsection~\ref{classical}). We have $(L_{\om_k})_m=0$ for $m\ll 0$, hence for each tuple $1\le i_1<\ldots<i_k\le n$ 
we may choose the least $m$ such that $e_{i_1,\ldots,i_k}\in (L_{\omega_k})_m$. Let $e_{i_1,\ldots,i_k}^A\in L_{\omega_k}$ 
be the image of $e_{i_1,\ldots,i_k}$ in $(L_{\omega_k}^A)_m$.
\begin{proposition}
The vectors $e_{i_1,\ldots,i_k}^A$ comprise a basis in $L_{\omega_k}^A$.
\end{proposition}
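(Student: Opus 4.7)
The decisive feature of the fundamental representation $L_{\omega_k}=\wedge^k V$ is the monomial action of the generators recorded in Section~\ref{classical}: each $f_{i,j}$ sends each basis vector $e_{i_1,\ldots,i_k}$ either to zero or to a signed basis vector. The plan is to use this to show that every filtered piece $(L_{\omega_k})_m$ is itself a coordinate subspace with respect to the basis $\{e_{i_1,\ldots,i_k}\}$, whence the proposition follows automatically upon passing to the associated graded.

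I would first write $(L_{\omega_k})_m = \mU_m v_{\omega_k}$ as the linear span of products $f_{j_1,k_1}\cdots f_{j_N,k_N} v_{\omega_k}$ with $a_{j_1,k_1}+\cdots+a_{j_N,k_N}\le m$. Iterating the monomial action starting from $v_{\omega_k}=e_{1,\ldots,k}$, and reordering factors in each wedge with the appropriate sign, each such product evaluates to either $0$ or $\pm e_I$ for a single $k$-subset $I\subseteq\{1,\ldots,n\}$. Since the filtration is bounded below by $0$ and eventually exhausts $L_{\omega_k}$, the integer $s_I^A$ (the minimal $m$ with $e_I\in(L_{\omega_k})_m$) is well defined, and the previous observation gives
\[
(L_{\omega_k})_m = \spanned\{e_I : s_I^A \le m\}.
\]

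Taking the quotient, $(L_{\omega_k})_m/(L_{\omega_k})_{m-1}$ has a basis consisting of the images of those $e_I$ with $s_I^A = m$; summing over $m$ yields that the vectors $e_I^A$ constitute a basis of $L_{\omega_k}^A$. There is no serious obstacle here — the whole argument rests on the fact that on a fundamental representation each generator $f_{i,j}$ maps basis vectors to basis vectors (or to zero), so the only manipulations needed are bookkeeping of signs and the definition of the filtration. I note that this exploits the fundamental setting in an essential way: the analogous assertion that the natural images of a PBW-type basis span the degenerate representation $L_\la^A$ for arbitrary $\la$ is subtler, which is why the general case is addressed separately later in the paper.
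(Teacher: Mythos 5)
Your proof is correct and follows the same route as the paper, which also hinges entirely on the observation that each monomial in the $f_{i,j}$ sends $v_{\omega_k}$ to $0$ or to $\pm e_I$ for a single $k$-subset $I$, so that every filtered piece $(L_{\omega_k})_m$ is a coordinate subspace. One small imprecision: weight systems consist of arbitrary integers (possibly negative), so the filtration need not be bounded below by $0$; what is actually used (and what the paper states) is only that $(L_{\omega_k})_m=0$ for $m\ll 0$, which holds because $L_{\omega_k}$ is finite-dimensional and each $e_I$ is hit by only finitely many monomials.
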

\begin{proof}
This follows directly from the fact that the image of $v_{\om_k}$ under the action of any monomial in the $f_{i,j}$ is equal to $\pm e_{i_1,\ldots,i_k}$ for some $1\le i_1<\ldots<i_k\le n$.
\end{proof}
For each tuple $1\le i_1<\ldots<i_k\le n$ let $s^A_{i_1,\ldots,i_k}$ be the integer such that 
$e^A_{i_1,\ldots,i_k}\in(L_{\om_k}^A)_{s^A_{i_1,\ldots,i_k}}$.

We now proceed to generalize the constructions in subsections~\ref{classical} and~\ref{abelian} to give an explicit description of the homogeneous coordinate ring of $F_\la^A$ (with respect to a certain projective embedding) in terms of generators and relations. 

We define the $\mU^A$-representation $$U_\la^A=(L_{\om_1}^A)^{\otimes a_1}\otimes\ldots\otimes(L_{\om_{n-1}}^A)^{\otimes a_{n-1}}.$$ In $U_\lambda^A$ we distinguish the vector $$w_\la^A=(v_{\om_1}^A)^{\otimes a_1}\otimes\ldots\otimes(v_{\om_{n-1}}^A)^{\otimes a_{n-1}}.$$ The following key fact holds.
\begin{lemma}\label{tensor}
There is an isomorphism between $L_\la^A$ and the cyclic subrepresentation $\mU^A w_\la^A\subset U_\la^A$ mapping $v_\la^A$ to $w_\la^A$.
\end{lemma}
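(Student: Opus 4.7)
The plan is to construct a surjection $L_\la^A\twoheadrightarrow \mU^A w_\la^A$ by taking the associated graded of the classical embedding $L_\la\cong\mU w_\la$, and to force injectivity via a dimension count using the FFLV basis. First, I equip $U_\la$ with the tensor-product $A$-filtration
$$F^m U_\la=\sum_{m_1+\cdots+m_{|a|}=m}(L_{\om_{k(1)}})_{m_1}\otimes\cdots\otimes (L_{\om_{k(|a|)}})_{m_{|a|}},$$
where the $\ell$-th tensor factor is $L_{\om_{k(\ell)}}$ and $|a|=\sum_i a_i$. One verifies that $\gr_F U_\la=U_\la^A$ matches the definition in the statement, and that the $\mU$-action on $U_\la$ is filtered, $\mU_p\cdot F^qU_\la\subseteq F^{p+q}U_\la$: each generator $f_{i,j}\in\mU_{a_{i,j}}$ acts through the primitive coproduct $\Delta(f_{i,j})=f_{i,j}\otimes 1+1\otimes f_{i,j}$, so every summand raises the filtration by at most $a_{i,j}$. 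Consequently $\mU^A$ acts on $U_\la^A$ and $w_\la\in F^0 U_\la$ has class $w_\la^A$.

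Next, the classical isomorphism $\phi\colon L_\la\xrightarrow{\sim}\mU w_\la$, $v_\la\mapsto w_\la$, is a filtered $\mU$-module map since $\phi((L_\la)_m)=\mU_m w_\la\subseteq F^m U_\la$. Passing to associated gradeds yields a $\mU^A$-equivariant graded map $\pi:=\gr(\phi)\colon L_\la^A\to U_\la^A$ with $\pi(v_\la^A)=w_\la^A$ and image equal to $\mU^A w_\la^A$; this is the required surjection. For injectivity I argue via a dimension count: since $\dim L_\la^A=\dim L_\la=|\Pi_\la|$, it suffices to produce $|\Pi_\la|$ linearly independent vectors in $\mU^A w_\la^A$. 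Fix a linear order on the root vectors and set $\tilde M_T=\prod(f_{i,j}^A)^{T_{i,j}}$ (in that order) for $T\in\Pi_\la$. Condition (A) ensures PBW reorderings do not raise $A$-degree, so the FFLV monomials $\{M_T v_\la : \sum T_{i,j}a_{i,j}\le m\}$ form a basis of each $(L_\la)_m$; hence $\{\tilde M_T v_\la^A\}_{T\in\Pi_\la}$ is a graded basis of $L_\la^A$, and it remains to prove that the images $\{\tilde M_T w_\la^A\}_{T\in\Pi_\la}$ are linearly independent in $U_\la^A$.

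For this I would expand each $\tilde M_T w_\la^A$ via the primitive coproduct into a sum, indexed by distributions of the $f^A_{i,j}$'s among the $|a|$ tensor factors of $w_\la^A$, of tensor products of vectors in the $L^A_{\om_{k(\ell)}}$'s. By Lemma \ref{fflvtab}, $T$ admits a canonical column decomposition $T=T^{(1)}+\cdots+T^{(\la_1)}$ with $T^{(\ell)}\in\Pi_{\om_{\la'_\ell}}$; this singles out one distinguished distribution whose contribution is a tensor product of FFLV basis vectors in the $L^A_{\om_{\la'_\ell}}$'s, a nonzero standard tensor basis vector of $U_\la^A$ that is distinct across different $T$ by Lemma \ref{fflvtab}. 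The main obstacle is showing that this canonical contribution cannot be cancelled by any other distribution; this is precisely where condition (B) of the weight system enters, as flagged in the remark at the end of Section~\ref{wPBW}: the inequality $a_{i,j}+a_{k,l}\ge a_{i,l}+a_{k,j}$ (for $i<k<j<l$) controls the relative tensor filtration of ``crossing'' distributions, forcing every non-canonical distribution that could yield the same tensor basis vector to sit in strictly lower filtration. Carrying out this combinatorial case analysis, using the antichain structure of $\Pi_{\om_k}$ and the injectivity of the column decomposition from Lemma \ref{fflvtab}, yields the required linear independence and forces $\pi$ to be an isomorphism.
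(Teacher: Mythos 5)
Your overall strategy matches the paper's: produce a surjection $L_\la^A\twoheadrightarrow\mU^Aw_\la^A$ by passing a classical isomorphism through the filtered structure, then win by a dimension count via the linear independence of $\{M_Tw_\la^A\}_{T\in\Pi_\la}$. The surjectivity half, phrased as the associated graded of a filtered $\mU$-module map $L_\la\to U_\la$, is a clean repackaging of what the paper does explicitly with the map $\Psi$ and the decomposition of $U_\la$ by the weights $m^A(e)$; nothing to complain about there (the only small caveat is that you should not \emph{also} assert that the FFLV monomials of $A$-degree $\le m$ span $(L_\la)_m$, since that statement is in fact one of the paper's \emph{consequences} of this lemma and is not needed for the dimension-count route you chose).

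The gap is in the linear-independence half, which in the paper is a separate proposition and the real content of the proof. Your sketch says condition (B) forces ``every non-canonical distribution that could yield the same tensor basis vector to sit in strictly lower filtration.'' That is not the mechanism: all nonzero summands in the expansion of $M_Tw_\la^A$ already live in the single graded piece of degree $\deg^AM_T$, so there is no filtration drop to exploit. What actually works (and what the paper does) is to refine the graded piece of $U_\la^A$ into a direct sum of subspaces $(U_\la^A)_{T'}$ indexed by $\Pi_\la$ via the labeling $e\mapsto T(e)$, introduce the auxiliary lexicographic partial order $\preceq$ on number triangles built from the sequence $\gamma(T)$, and then show a triangularity: each summand of $M_Tw_\la^A$ lands in some $(U_\la^A)_{T'}$ with $T'\preceq T$, and by the Minkowski property at least one distribution $(T^{k,l})$ with $T^{k,l}\in\Pi_{\om_k}$ contributes a genuine (nonzero) vector to $(U_\la^A)_T$ itself. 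The inequalities of type (B) enter exactly here: they guarantee that the crossing replacement $f_{i,j}f_{k,l}\rightsquigarrow f_{i,l}f_{k,j}$ (for $i<k<j<l$) does not increase $\deg^A$, which is what lets one reduce an arbitrary monomial hitting $\pm e^A_{i_1,\dots,i_k}$ to the canonical antichain monomial while showing its triangle is $\succeq T(e^A_{i_1,\dots,i_k})$. Without setting up this order (or an equivalent device) and carrying out the triangularity argument, the ``combinatorial case analysis'' you defer to is precisely where the proof lives, so as written the injectivity half is not established.
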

The proof of this lemma will be given in the next section.

Now let us consider the Segre embedding $$\mathbb P(L_{\om_1}^A)^{a_1}\times\ldots\times\mathbb P(L_{\om_{n-1}}^A)^{a_{n-1}}\subset\mathbb P(U_\la^A)$$ and the embedding $$\mathbb P_{\bf d}^A=\mathbb P(L_{\om_{d_1}}^A)\times\ldots\times\mathbb P(L_{\om_{d_s}}^A)\subset \mathbb P(L_{\om_1}^A)^{a_1}\times\ldots\times\mathbb P(L_{\om_{n-1}}^A)^{a_{n-1}}$$ where $\mathbb P(L_{\om_{d_i}}^A)$ is embedded diagonally into $\mathbb P(L_{\om_{d_i}}^A)^{a_{d_i}}$.

Denote $y_\la^A$ the point in $\mathbb P(U_\la^A)$ corresponding to $\mathbb Cw_\la^A$ and recall the Lie group $N^A$.
\begin{proposition}
We have $N^A y_\la^A\subset \mathbb P_{\bf d}^A\subset \mathbb P(U_\la^A)$. 
\end{proposition}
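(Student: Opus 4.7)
The plan is to reduce the statement to the diagonality of the action of $N^A$ on the tensor product $U_\la^A$. First I would make explicit that the $\mathcal U^A$-module structure on $U_\la^A$ is obtained via the standard coproduct on the enveloping algebra, so an element $X \in \mathfrak n_-^A$ acts on a pure tensor by the Leibniz rule. Since $N^A$ is the connected, simply connected Lie group integrating $\mathfrak n_-^A$, this exponentiates to the diagonal action
\begin{equation*}
g \cdot (u_1 \otimes \cdots \otimes u_N) = (g u_1) \otimes \cdots \otimes (g u_N), \qquad g \in N^A,
\end{equation*}
on each tensor factor of $U_\la^A$.

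Applying this identity to $w_\la^A$ I would obtain
\begin{equation*}
g \cdot w_\la^A = (g v_{\om_1}^A)^{\otimes a_1} \otimes \cdots \otimes (g v_{\om_{n-1}}^A)^{\otimes a_{n-1}}.
\end{equation*}
Since $g$ acts as an invertible linear map on each $L_{\om_i}^A$, every $g v_{\om_i}^A$ is nonzero and represents a well-defined point $[g v_{\om_i}^A] \in \mathbb P(L_{\om_i}^A)$. For those indices $i = d_j$ with $a_i > 0$, all $a_{d_j}$ tensor factors in the corresponding block are identical, which is precisely the condition that the resulting point of $\mathbb P(L_{\om_{d_j}}^A)^{a_{d_j}}$ lies in the diagonally embedded copy of $\mathbb P(L_{\om_{d_j}}^A)$. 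Combined with the fact that the Segre embedding carries a tuple of lines to the line spanned by the tensor product of their representatives, this shows that $[g w_\la^A]$ lies in $\mathbb P_{\bf d}^A \subset \mathbb P(U_\la^A)$. Since $g \in N^A$ was arbitrary, $N^A y_\la^A \subset \mathbb P_{\bf d}^A$.

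There is essentially no serious obstacle in this argument; the whole content is the recognition that $N^A$ acts diagonally on the tensor product, after which the inclusion is a formal consequence of the shape of $w_\la^A$ and of the diagonal/Segre embedding. The only point meriting explicit mention is invertibility of the $N^A$-action on each fundamental factor, which is automatic and ensures that the projective coordinates are well defined.
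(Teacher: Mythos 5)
Your proof is correct and is simply an expanded version of the paper's one-line argument, which states that the inclusion follows immediately from the definition of the Segre embedding. The key observation in both is that $N^A$ acts diagonally on the tensor product $U_\lambda^A$, so $g\cdot w_\lambda^A$ stays a pure tensor with repeated factors in each block, which is precisely what the composition of the diagonal embeddings and the Segre embedding demands.
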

\begin{proof}
This follows immediately from the definition of the Segre embedding.
\end{proof}

In view of Lemma~\ref{tensor}, the closure $\overline{N^A y_\la^A}$ is precisely $F_\la^A$ and we have, therefore, embedded $F_\la^A$ into $\mathbb P_{\bf d}^A$. Let us consider homogeneous coordinates $X_{i_1,\ldots,i_{d_j}}^A$ on $\mathbb P_{\bf d}^A$ for all $1\le j\le s$ 
and all $1\le i_1<\ldots<i_{d_j}\le n$ with $X_{i_1,\ldots,i_{d_j}}^A$ being dual to $e^A_{i_1,\ldots,i_k}$.

We introduce a grading on the ring $R_{\bf d}^A=\mathbb C[\{X_{i_1,\ldots,i_{d_j}}^A\}]$ by setting 
$\grad^A X_{i_1,\ldots,i_{d_j}}^A=s^A_{i_1,\ldots,i_{d_j}}$. 
Let $\varphi^A:R_{\bf d}\rightarrow R_{\bf d}^A$ be the isomorphism sending $X_{i_1,\ldots,i_{d_j}}$ to $X_{i_1,\ldots,i_{d_j}}^A$, 
and $I_{\bf d}^A$ be the initial ideal $\initial_{\grad^A}(\varphi^A(I_{\bf d}))$.
\begin{theorem}\label{main}
The ideal of the subvariety $F_\la^A\subset P_d^A$ is precisely $I_{\bf d}^A$.
\end{theorem}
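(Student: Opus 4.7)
The plan is to realize $F_\la^A$ as the scheme-theoretic flat limit of a one-parameter family of copies of $F_\la$ inside $\mathbb{P}_{\bf d}$, and then invoke the standard Gr\"obner-degeneration theorem to identify the limit's ideal with $I_{\bf d}^A$. Concretely, for $t\in\mathbb{C}^*$ I would define the diagonal automorphism $\rho_t$ of each $L_{\om_k}$ by $\rho_t(e_{i_1,\ldots,i_k})=t^{-s^A_{i_1,\ldots,i_k}}e_{i_1,\ldots,i_k}$ and extend it diagonally to $U_\la$. Since $s^A_{1,\ldots,k}=0$, the highest weight vector is fixed, so $\rho_t$ carries $F_\la$ to a subvariety $\rho_t(F_\la)\subset\mathbb{P}_{\bf d}$. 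The defining ideal of $\rho_t(F_\la)$ in the standard Pl\"ucker coordinates is obtained from $I_{\bf d}$ by the substitution $X_\bullet\mapsto t^{s^A_\bullet}X_\bullet$, and by the standard Gr\"obner-degeneration theorem for flat families along a $\mathbb{G}_m$-action, the flat limit as $t\to 0$ is cut out by $\initial_{\grad^A}(I_{\bf d})$, which under $\varphi^A$ is exactly $I_{\bf d}^A$.

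It then remains to identify this flat limit with $F_\la^A$, for which I would track the conjugated Lie algebra action. A direct computation shows that $\rho_t f_{i,j}\rho_t^{-1}$ acts on $L_{\om_k}$ sending $e_{i_1,\ldots,i_k}$ to $\pm t^{s^A_{i_1,\ldots,i_k}-s^A_{i'_1,\ldots,i'_k}}e_{i'_1,\ldots,i'_k}$ whenever the classical action is nonzero. The very definition of the filtration on $L_{\om_k}$ yields $s^A_{i'_1,\ldots,i'_k}-s^A_{i_1,\ldots,i_k}\leq a_{i,j}$, with equality exactly for those matrix entries which survive in the graded action of $f_{i,j}^A$. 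Hence $t^{a_{i,j}}\rho_t f_{i,j}\rho_t^{-1}$ extends regularly to $t=0$ with value $f_{i,j}^A$. Using Proposition~\ref{moreineqs} to ensure this rescaling is compatible with commutators and tensor products, and using Lemma~\ref{tensor} to identify $L_\la^A$ with a cyclic subspace of $U_\la^A$, the rescaled family of subgroups degenerates coherently to $N^A$; since $\rho_t u_\la=u_\la$ corresponds to $u_\la^A$ at $t=0$, the flat limit of the orbits $(\rho_t N\rho_t^{-1})u_\la$ coincides with $\overline{N^A u_\la^A}=F_\la^A$.

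The main obstacle is the coherent-degeneration statement invoked in the previous paragraph: the per-generator rescalings $t^{a_{i,j}}$ must fit together into a single well-defined limiting action of $N^A$ on the whole tensor product $U_\la^A$. Inequality (A) of Proposition~\ref{moreineqs} is what guarantees that the commutator of two rescaled generators degenerates to the correctly rescaled commutator in $\fn_-^A$ (so that $[f_{i,j}^A,f_{k,l}^A]$ is rescaled by $t^{a_{i,l}}$ rather than by $t^{a_{i,j}+a_{k,l}}$), while inequality (B) provides the slack needed for the simultaneous rescalings on different tensor factors of $U_\la^A$ to be consistent. This consistency is essential because the identification $L_\la^A\cong\mathcal{U}^Aw_\la^A\subset U_\la^A$ from Lemma~\ref{tensor} is exactly what allows us to pass between the abstract degenerate representation and its embedding into $\mathbb{P}_{\bf d}^A$. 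Once this identification is in place, the Gr\"obner-degeneration conclusion closes the argument.
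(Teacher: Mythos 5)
Your approach via $\mathbb G_m$-conjugation and Gr\"obner degeneration is genuinely different from the paper's (which parametrizes the $N^A$-orbit directly by a map $\psi^A$ into a polynomial ring $\mathbb C[\{z_{i,j},z_l\}]$ and then runs a dimension count). Your first paragraph is sound: the Rees family $\mathfrak I_{\bf d}^A$ built from $\theta^A(X_\bullet)=t^{s^A_\bullet}X_\bullet$ is flat and its special fiber is $V(\initial_{\grad^A}(I_{\bf d}))$. Your second paragraph correctly shows $t^{a_{i,j}}\rho_t f_{i,j}\rho_t^{-1}\to f_{i,j}^A$, and from this one does get that every $\exp(f^A)u_\la^A$ is a limit of points of $\rho_t(F_\la)$, hence $F_\la^A\subseteq V(\initial_{\grad^A}(I_{\bf d}))$, i.e.\ $I_{\bf d}^A\subseteq I(F_\la^A)$.

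However, the final sentence --- that the flat limit ``coincides with'' $\overline{N^Au_\la^A}$ --- is precisely the reverse inclusion $I(F_\la^A)\subseteq I_{\bf d}^A$, and you have not argued it. Convergence of the conjugated Lie algebras plus fixity of the base point only shows that the limiting orbit closure is \emph{contained in} the flat limit; a priori the flat limit could be reducible, nonreduced, or could strictly contain $F_\la^A$ as a lower-dimensional component, since orbit dimension is only lower semicontinuous in $t$ (the stabilizer of $u_\la^A$ in $N^A$ could be bigger than that of $u_\la$ in $N$). To close this one must show that the $\deg$-graded Hilbert function of $R_{\bf d}^A/I(F_\la^A)$ is at least that of $R_{\bf d}^A/I_{\bf d}^A$ (which is $\dim L_\mu$ in degree $\mu$). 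The paper does this by exhibiting $\dim L_\mu$ linearly independent elements in the coordinate ring of $F_\la^A$, namely the images $\psi^A(X^A(Y))$ for $Y\in\mathcal Y_\mu$, via a monomial order $\preceq$ under which their $\preceq$-minimal terms $\prod z_l^{b_l}\prod z_{i,j}^{\tau(Y)_{i,j}}$ are pairwise distinct (Lemma~\ref{fflvtab}). Some such independent lower bound --- or, alternatively, an argument that $V(I_{\bf d}^A)$ is reduced and irreducible of dimension equal to $\dim N^Au_\la^A$ --- is what is missing from your proposal.
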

We postpone the proof of this theorem till the next section.
Also in Section \ref{cone} we will prove the following
\begin{proposition}\label{Aquadratic}
The ideal $I_{\bf d}^A$ is generated by its quadratic part.
\end{proposition}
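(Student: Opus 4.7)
The plan is to adapt the argument that proves Proposition~\ref{abquadratic} in the abelian case. Let $J^A \subseteq R^A_{\bf d}$ denote the ideal generated by the $\grad^A$-initial parts of the classical Pl\"ucker relations~(\ref{plucker}). By construction, $J^A \subseteq I^A_{\bf d} = \initial_{\grad^A}(\varphi^A(I_{\bf d}))$, and the task is to prove the reverse inclusion.

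First I would reduce to a Hilbert function comparison in each $\mathfrak{h}^*$-graded component. Since passing to an initial ideal preserves Hilbert functions, and since Theorem~\ref{classicalplucker} identifies $\dim(R_{\bf d}/I_{\bf d})_\mu$ with $\dim L_\mu$, we have $\dim(R^A_{\bf d}/I^A_{\bf d})_\mu = \dim L_\mu$ for every $\mu$ in the monoid generated by $\omega_{d_1},\dots,\omega_{d_s}$. Consequently it is enough to exhibit, in each such degree, a spanning set of $(R^A_{\bf d}/J^A)_\mu$ of cardinality at most $\dim L_\mu$. The natural candidate is the set $\{X^A(Y) \mid Y \in \mathcal Y_\mu\}$, whose cardinality is exactly $\dim L_\mu$ by Theorem~\ref{abpbwssyt}.

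The spanning claim reduces, by a standard column-by-column straightening argument, to the two-column case: for each pair $(I,J)$ that is incomparable in the poset $\mathcal P$ of Remark~\ref{lattice}, the product $X^A_I X^A_J$ must be expressible modulo $J^A$ as a $\mathbb{C}$-linear combination of products $X^A_{I'} X^A_{J'}$ with $I' \preceq J'$, together possibly with further incomparable products that are smaller in some auxiliary well-founded order. To establish this, I would, for each such $(I,J)$, select an appropriate Pl\"ucker relation~(\ref{plucker}) having $X_I X_J$ on the left-hand side and inspect its $\grad^A$-initial part. The defining inequalities (A) and (B) of Proposition~\ref{moreineqs}---in particular (B), which is the additive analogue of the exchange of indices between two Pl\"ucker coordinates---should guarantee that the straightening summands of the Pl\"ucker relation attain the minimum total $\grad^A$-value, so that they survive in the initial part and produce the desired identity modulo $J^A$.

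The hardest part will be the combinatorial verification that the $\grad^A$-initial parts of the Pl\"ucker relations are rich enough to straighten every incomparable pair. In the abelian case $a_{i,j}=1$ this is precisely the content of the argument in~\cite{Fe1}; for general $A \in \mathcal K$ one must re-examine that argument and check that features specific to $\grad^a$ can be replaced uniformly by appeals to (A)--(B). Once this two-column straightening is in hand, iteration yields the spanning property, and combined with the Hilbert function bound it forces the reverse inclusion $I^A_{\bf d} \subseteq J^A$, completing the proof.
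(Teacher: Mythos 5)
Your reduction to a Hilbert function comparison and to showing that the monomials $X^A(Y)$ with $Y\in\mathcal Y_\mu$ span modulo the quadratic ideal agrees with the paper's opening move, which likewise invokes Corollary~\ref{Apbwssyt}. From that point on, however, your plan and the paper's proof diverge substantially, and the crucial step of your plan is left as a gap.

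You propose to carry out the two-column straightening directly for an arbitrary $A\in\mathcal K$ by reading off which terms of a classical Pl\"ucker relation survive in its $\grad^A$-initial part and checking, via inequalities (A) and (B), that these survivors are sufficient to straighten every incomparable pair. You explicitly flag this as ``the hardest part'' and defer it to a re-examination of the argument of~\cite{Fe1}. That re-examination is not routine: outside the toric chamber, the initial parts of Pl\"ucker relations are not binomials and can carry several surviving terms, so one cannot simply replace occurrences of $\grad^a$ by $\grad^A$. In effect, the content of the proposition lies exactly in the step you have postponed, and the proposal therefore stops short of a proof.

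The paper avoids this difficulty by a two-stage argument you do not attempt. First it establishes the claim for $A$ in the interior of $\mathcal K$, where the toric situation of Section~\ref{toric} makes the relevant relations into explicit binomials coming from the Minkowski decomposition $T(e^A_I)+T(e^A_J)=T(e^A_{I'})+T(e^A_{J'})$, and the straightening is transparent. Second, for arbitrary $A\in\mathcal K$ it picks an interior weight system $B$, uses Proposition~\ref{ABinitial} to view $(\varphi^B)^{-1}(I^B_{\bf d})$ as an initial ideal of $(\varphi^A)^{-1}(I^A_{\bf d})$, expresses the initial part of a given $\delta\in I^A_{\bf d}$ in terms of quadratics of $I^B_{\bf d}$, lifts those quadratics back to quadratics of $I^A_{\bf d}$, and iterates. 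This lifting-from-the-toric-face argument is the genuinely new ingredient, and it is what lets the paper bypass the uniform combinatorial verification you would otherwise have to supply. If you wish to salvage the direct approach, you would need to actually prove the straightening claim for all $A$; alternatively, you could adopt the paper's strategy of first settling the interior case and then transferring the result along the face lattice of $\mathcal K$ via Proposition~\ref{ABinitial}.
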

This means that $I_{\bf d}^A$ is generated by the initial parts of the relations~(\ref{plucker}).

An important property of $F_\la^A$ is that it provides a {\it flat degeneration} of $F_\la$ in the following sense. 
Consider the ring $\mathfrak R_{\bf d}=R^A_{\bf d}\otimes\mathbb C[t]$, it is the coordinate ring of the variety 
$\overline{\mathbb P}^A_{\bf d}=\mathbb P^A_{\bf d}\times\mathbb A^1$ (the affine line). Now consider some $\delta\in I_{\bf d}$. 
{Via $(\varphi^A)^{-1}$ we may view $\grad^A$ as a grading on $R_{\bf d}$.} 
Let $M$ be the minimal degree with respect to $\grad^A$ among the monomials appearing in $\delta$ with a nonzero coefficient. 
Let $\theta^A$ be the homomorphism from $R_{\bf d}$ to $\mathfrak R_{\bf d}$ sending $X_{i_1,\ldots,i_k}$ to 
$t^{s_{i_1,\ldots,i_k}^A} X^A_{i_1,\ldots,i_k}$. Let $\mathfrak I_{\bf d}^A\subset\mathfrak R_{\bf d}$ be the ideal 
generated by the expressions $t^{-M}\theta^A(\delta)$ ranging over all $\delta$ (in fact, one sees that $\mathfrak I_{\bf d}^A$ 
is generated by the expressions $t^{-M}\theta^A(\delta)$ already as a $\mathbb C[t]$-module).

Denote $\mathfrak F_{\bf d}^A\subset\mathfrak \overline{\mathbb P}^A_{\bf d}$ the subvariety given by the ideal 
$\mathfrak I_{\bf d}^A$. Let $\pi_2: \overline{\mathbb P}^A_{\bf d}\ra\mathbb{A}^1$ be the projection onto the second component.
One sees that $(\mathfrak R_{\bf d}/\mathfrak I_{\bf d}^A)/(t)$ is isomorphic to $R^A_{\bf d}/I^A_{\bf d}$, i.e. the fiber 
$\pi_2^{-1}(0)\cap\mathfrak F_{\bf d}^A$ is equal to $F^A_\la$, while 
$(\mathfrak R_{\bf d}/\mathfrak I_{\bf d}^A)/(t-c)$ for $c\neq 0$ is isomorphic to $R_{\bf d}/I_{\bf d}$, i.e. any fiber 
$\mathfrak F_{\bf d}^A\cap \pi_2^{-1}(c)$ with $c\neq 0$ is isomorphic to $F_\la$.

The flatness of the degeneration will follow from the somewhat stronger
\begin{proposition}\label{free}
The ring $\mathfrak R_{\bf d}/\mathfrak I_{\bf d}^A$ is free over $\mathbb C[t]$.
\end{proposition}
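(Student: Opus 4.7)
My strategy is to refine the $\mathbb C[t]$-module $\mathfrak R_{\bf d}/\mathfrak I_{\bf d}^A$ using the natural $\mathfrak h^*$-grading, then apply the structure theorem for modules over a PID fiber-by-fiber. The ring $\mathfrak R_{\bf d}=R_{\bf d}^A\otimes\mathbb C[t]$ inherits an $\mathfrak h^*$-grading from $R_{\bf d}^A$ by placing $t$ in weight $0$, and $\theta^A$ sends the Pl\"ucker coordinate $X_{i_1,\ldots,i_k}$ of weight $\om_k$ to $t^{s^A_{i_1,\ldots,i_k}} X^A_{i_1,\ldots,i_k}$, which has the same weight. Consequently, for every $\mathfrak h^*$-homogeneous Pl\"ucker relation $\delta\in I_{\bf d}$ the generator $t^{-M}\theta^A(\delta)$ is $\mathfrak h^*$-homogeneous, so $\mathfrak I_{\bf d}^A$ is an $\mathfrak h^*$-homogeneous ideal and
\[
\mathfrak R_{\bf d}/\mathfrak I_{\bf d}^A=\bigoplus_\mu (\mathfrak R_{\bf d}/\mathfrak I_{\bf d}^A)_\mu,
\]
with each $(\mathfrak R_{\bf d}/\mathfrak I_{\bf d}^A)_\mu$ a finitely generated $\mathbb C[t]$-module (since $(R_{\bf d}^A)_\mu$ is finite-dimensional over $\mathbb C$). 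It suffices to prove that each such summand is $\mathbb C[t]$-free.

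Next I would verify the two fiber descriptions already indicated before the statement. Evaluating $t^{-M}\theta^A(\delta)$ at $t=0$ selects exactly the monomials of $\varphi^A(\delta)$ that attain the minimal $\grad^A$-value $M$; this is by definition $\initial_{\grad^A}(\varphi^A(\delta))$, so the $t=0$ fiber equals $R_{\bf d}^A/I_{\bf d}^A$. For $c\neq 0$, the invertible rescaling $X^A_{i_1,\ldots,i_k}\mapsto c^{-s^A_{i_1,\ldots,i_k}}X^A_{i_1,\ldots,i_k}$ followed by multiplication by $c^M$ transforms $t^{-M}\theta^A(\delta)|_{t=c}$ back into $\varphi^A(\delta)$, so the $t=c$ fiber is isomorphic to $R_{\bf d}/I_{\bf d}$. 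By Theorem~\ref{main} (for $c=0$) and Theorem~\ref{classicalplucker} (for $c\neq 0$), both fibers have $\mathfrak h^*$-weight-$\mu$ component of the same dimension $\dim L_\mu$; alternatively this is the standard fact that passage to the initial ideal of an $\mathfrak h^*$-homogeneous ideal preserves the $\mathfrak h^*$-Hilbert function in each finite-dimensional weight component.

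Now a finitely generated $\mathbb C[t]$-module $M$ splits as $\mathbb C[t]^{r}\oplus\bigoplus_i \mathbb C[t]/(t-c_i)^{n_i}$; a torsion summand $\mathbb C[t]/(t-c_i)^{n_i}$ contributes $1$ to $\dim_{\mathbb C} M/(t-c)M$ precisely when $c=c_i$ and $0$ otherwise, so constancy of the fiber dimension in $c$ forces torsion-freeness and hence freeness. Applied to each $(\mathfrak R_{\bf d}/\mathfrak I_{\bf d}^A)_\mu$ this gives the result; the direct sum over $\mu$ is then free as well. The main thing to watch for is the $t=0$ fiber: one must make sure the image of $\mathfrak I_{\bf d}^A$ in $R_{\bf d}^A$ is all of $I_{\bf d}^A$, not just a proper sub-ideal thereof. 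This is however automatic because the \emph{defining} generators $t^{-M}\theta^A(\delta)$ were chosen precisely so that their reductions mod $t$ exhaust the generators $\initial_{\grad^A}(\varphi^A(\delta))$ of $I_{\bf d}^A$, and this was already used in stating the degenerate fiber.
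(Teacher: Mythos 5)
Your proof is correct, and it takes a genuinely different route from the paper's. Both proofs reduce to the finitely generated $\mathbb C[t]$-modules $(\mathfrak R_{\bf d}/\mathfrak I_{\bf d}^A)_\mu$ via the $\mathfrak h^*$-grading, but from there you invoke the structure theorem for modules over a PID together with constancy of the fiber dimensions: the $t=0$ fiber is $(R_{\bf d}^A/I_{\bf d}^A)_\mu$, every $t=c$ fiber with $c\neq 0$ is $(R_{\bf d}/I_{\bf d})_\mu$ after rescaling, and the standard fact that forming an initial ideal preserves the Hilbert function in each finite-dimensional weight component makes all these fibers have dimension $\dim L_\mu$, which rules out torsion. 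The paper instead gives a direct, self-contained construction: it chooses a basis $\delta_1,\ldots,\delta_{D_\mu}$ of $I_{{\bf d},\mu}$ whose initial parts form a basis of $I_{{\bf d},\mu}^A$ (this choice is itself essentially a proof of the Hilbert-function fact you cite), extends by monomials to a basis of $R_{{\bf d},\mu}$, and verifies by hand that the corresponding elements $t^{-M}\theta^A(\delta_i)$ together with the extra monomials form a $\mathbb C[t]$-basis of $\mathfrak R_{{\bf d},\mu}$. Your argument is shorter and more conceptual, at the cost of appealing to two classical theorems (structure theorem for PIDs, Hilbert-function invariance under passage to initial ideals); the paper's argument is longer but elementary and produces an explicit $\mathbb C[t]$-basis as a byproduct, which is in the spirit of its remark that the same construction ``applies in the general case.'' Your closing observation about ensuring the reduction mod $t$ of $\mathfrak I_{\bf d}^A$ is all of $I_{\bf d}^A$ is the right thing to watch and is handled correctly; the paper sidesteps this by carrying the $\mathbb C[t]$-module generating set directly.
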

\begin{proof}
The ring $\mathfrak R_{\bf d}$ is also $\mathfrak h^*$-graded by $\deg X^A_{i_1,\ldots,i_k}=\om_k$ and $\deg t=0$. 
The ideal $\mathfrak I_{\bf d}^A$ is $\deg$-homogeneous. We choose a weight $\mu\in\mathbb Z\{\omega_{d_1},\ldots,\omega_{d_s}\}$ 
and prove that $\mathfrak R_{\bf d,\mu}/\mathfrak I^A_{\bf d,\mu}$ is free over $\mathbb C[t]$ where $\mathfrak R_{\bf d,\mu}$ and 
$\mathfrak I^A_{\bf d,\mu}$ are the components of homogeneity degree $\mu$ in the respective spaces.

Denote $I_{\bf d,\mu}$ the component of homogeneity degree $\mu$ in $I_{\bf d}$. We may choose a basis 
$\delta_1,\ldots,\delta_{D_\mu}$ in $I_{\bf d,\mu}$ such that the initial parts $\initial_{\grad^A}(\varphi^A(\delta_i))$ comprise a basis in the 
homogeneous component $I_{\bf d,\mu}^A$. Indeed, consider the subspace of $I_{\bf d,\mu}$ of such $\delta$ that 
$\grad^A(\initial_{\grad^A}(\varphi^A(\delta)))$ is as large as possible and choose a basis in this subspace. Then consider the subspace of 
$I_{\bf d,\mu}$ of such $\delta$ that $\grad^A(\initial_{\grad^A}(\varphi^A(\delta)))$ takes 
one of the two largest possible values and extended the previously chosen basis to a basis in this space. Increasing the subspace by adding one value of $\grad^A(\initial_{\grad^A}(\varphi^A(\delta)))$ at a time will result in a basis with the desired property.

We may extended the basis $\{\delta_i\}$ to a basis in the homogeneous component $R_{\bf d,\mu}$ by a set of monomials 
$M_1,\ldots,M_{\dim L_\mu}$ in such a way that the set $\{\initial_{\grad^A}(\varphi^A(\delta_i))\}\cup\{\varphi^A(M_i)\}$ is a basis in 
$R_{\bf d,\mu}^A$. Let $\mathfrak O$ be the subset in $\mathfrak R_{\bf d,\mu}$ comprised of the expressions 
$t^{-\grad^A(\initial_{\grad^A}(\varphi^A(\delta_i)))}\theta^A(\delta_i)$ and $\mathfrak M$ be the subset in $\mathfrak R_{\bf d,\mu}$ comprised of the monomials $\varphi^A(M_i)$. It is straightforward to check that $\mathfrak O$ generates $\mathfrak I^A_{\bf d,\mu}$ as a $\mathbb C[t]$-module and $\mathfrak O\cup\mathfrak M$ is a $\mathbb C[t]$-basis in $\mathfrak R_{\bf d,\mu}$. The proposition follows.
\end{proof}

One sees that the above proof applies in the general case: one could take any homogeneous ideal instead of $I_{\bf d}$ and any grading 
instead of $\grad^A$.

\section{Proof of Theorem~\ref{main}}\label{proof}

In this section we will prove Lemma~\ref{tensor} and then derive Theorem~\ref{main}.

We start off by giving the following explicit description of the integers $s^A_{i_1,\ldots,i_k}$. Choose $1\le i_1<\ldots<i_k\le n$ and let the integers $p_1<\ldots<p_l$ comprise the difference $\{1,\ldots,k\}\backslash\{i_1,\ldots,i_k\}$ while the integers $q_1>\ldots>q_l$ comprise the difference $\{i_1,\ldots,i_k\}\backslash\{1,\ldots,k\}$. 
\begin{proposition}\label{antichainmin}
In the above notations $$s^A_{i_1,\ldots,i_k}=a_{p_1,q_1}+\ldots+a_{p_l,q_l}.$$
\end{proposition}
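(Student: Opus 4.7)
The plan is to prove the two inequalities $s^A_{i_1,\ldots,i_k}\le\sum_i a_{p_i,q_i}$ and $s^A_{i_1,\ldots,i_k}\ge\sum_i a_{p_i,q_i}$ separately. For the upper bound I would consider the PBW monomial $M_0=f_{p_1,q_1}f_{p_2,q_2}\cdots f_{p_l,q_l}\in\mU$. Since all $p_i$ lie in $\{1,\ldots,k\}$, all $q_i$ lie in $\{k+1,\ldots,n\}$, and both lists consist of distinct elements, a short induction using the explicit action of $f_{i,j}$ on the wedge basis of $L_{\om_k}$ (recalled in \S\ref{classical}) yields $M_0 v_{\om_k}=\pm e_{i_1,\ldots,i_k}$; the order of the factors is immaterial, as each factor acts nontrivially at its step. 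Since $\deg^A M_0=\sum_i a_{p_i,q_i}$, this proves the upper bound.

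For the lower bound, I first observe that on $L_{\om_k}$ each $f_{i,j}$ sends every wedge basis vector either to zero or to $\pm$ another basis vector. Consequently, a general element of $\mU_m$ can contribute to $e_{i_1,\ldots,i_k}$ in $\mU_m v_{\om_k}$ only through a PBW monomial $M=f_{r_1,s_1}\cdots f_{r_N,s_N}$ satisfying $Mv_{\om_k}=c\cdot e_{i_1,\ldots,i_k}$ for some $c\ne 0$; it is therefore enough to show $\deg^A M\ge\sum_i a_{p_i,q_i}$ for every such $M$. I would associate to $M$ the directed multigraph $G$ on $\{1,\ldots,n\}$ with edges $r_t\to s_t$ of weight $a_{r_t,s_t}$, and then track which coordinates are present in the wedge at each step of the computation of $Mv_{\om_k}$; this shows that in $G$ the net out-degree equals $1$ at every $p_i$, the net in-degree equals $1$ at every $q_i$, and the in- and out-degrees agree at every other vertex.

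A standard flow decomposition then writes $G$ as an edge-disjoint union of directed paths from the sources $p_i$ to the sinks $q_j$ together with some directed cycles. Since every edge $r_t\to s_t$ satisfies $r_t<s_t$, the graph $G$ is acyclic, so this decomposition consists of exactly $l$ paths $P_1,\ldots,P_l$, where $P_i$ runs from $p_i$ to $q_{\sigma(i)}$ for some permutation $\sigma\in S_l$. Writing $P_i\colon\ p_i=v^i_0<v^i_1<\cdots<v^i_{m_i}=q_{\sigma(i)}$ and applying inequality (A) of Proposition~\ref{moreineqs} telescopically along each $P_i$ would yield $\sum_j a_{v^i_j,v^i_{j+1}}\ge a_{p_i,q_{\sigma(i)}}$; summing over $i$ gives $\deg^A M\ge\sum_i a_{p_i,q_{\sigma(i)}}$.

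Finally, to reduce $\sigma$ to the identity I would note that for any $1\le t<t'\le l$ one has $p_t<p_{t'}\le k<\min(q_{\sigma(t)},q_{\sigma(t')})$, so inequality (B) of Proposition~\ref{moreineqs} applies to the four indices $\{p_t,p_{t'},q_{\sigma(t)},q_{\sigma(t')}\}$ and shows that correcting any adjacent descent of $\sigma$ weakly decreases $\sum_i a_{p_i,q_{\sigma(i)}}$. A bubble-sort argument then gives $\sum_i a_{p_i,q_{\sigma(i)}}\ge\sum_i a_{p_i,q_i}$, which combined with the preceding paragraph completes the lower bound. The main subtlety will be the clean setup of the flow-balance analysis for $G$; once the degree count and acyclicity are in place, inequalities (A) and (B) from Proposition~\ref{moreineqs} handle the rest of the estimate.
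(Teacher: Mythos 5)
Your proof is correct, but your lower-bound argument is packaged quite differently from the paper's. The paper fixes a monomial $M$ sending $v_{\om_k}$ to $\pm e_{i_1,\ldots,i_k}$ that is minimal both in $\deg^A$ and in length, and then shows by explicit replacement moves (fusing $f_{x,y}f_{y',x}\mapsto f_{y',y}$ via inequality (A), uncrossing $f_{x,y}f_{x',y'}\mapsto f_{x,y'}f_{x',y}$ via inequality (B)) that $M$ can be driven, without increasing $\deg^A$, to the canonical monomial $f_{p_1,q_1}\cdots f_{p_l,q_l}$ -- a reduction-to-normal-form argument with some delicate minimality bookkeeping. You instead treat an arbitrary monomial $M$ with $Mv_{\om_k}=\pm e_{i_1,\ldots,i_k}$, encode it as a flow on an acyclic digraph, decompose into $l$ source-to-sink paths, telescope inequality (A) along each path, and then bubble-sort the induced permutation $\sigma$ with inequality (B). The underlying combinatorics is the same (the two moves in the paper correspond exactly to path contraction and uncrossing in your decomposition), but your version is cleaner as a direct estimate and avoids having to pick a minimizer and track that each replacement decreases the length. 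What the paper's phrasing buys is an explicit by-product used in later sections (the proof of Theorem~\ref{toricthm} and Proposition~\ref{facesame}): any monomial of degree exactly $s^A_{i_1,\ldots,i_k}$ hitting $e_{i_1,\ldots,i_k}$ is reducible to $f_{p_1,q_1}\cdots f_{p_l,q_l}$ by moves that \emph{preserve} $\deg^A$. Your argument also yields this (equality forces equality in every application of (A) and (B) along the chain), but you would need to say so explicitly if you wanted to reuse it.
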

\begin{proof}
Consider a monomial $M=f_{x_1,y_1}\ldots f_{x_N,y_N}\in\mU$ such that $M v_{\om_k}=\pm e_{i_1,\ldots,i_k}$ of 
minimal possible degree $\deg^A$. Then $s^A_{i_1,\ldots,i_k}=\deg^A M$.

Let us also assume that among all possible choices, $M$ is comprised of the least possible number of $f_{x,y}$, i.e. $N$ is as small as possible with the above properties holding.

First, let us show that we have $x_j\le k$ and $y_j\ge k+1$ for all $1\le j\le N$. Indeed, consider the largest $j$ such that either 
$x_j>k$ or $y_j<k+1$. Suppose first that $x_j>k$. Note that $f_{x_j,y_j}$ acts nontrivially only on those $e_{i'_1,\ldots,i'_k}$ for 
which $x_j\in\{i'_1,\ldots,i'_k\}$. Since $v_{\om_k}=e_{1,\ldots,k}$, this implies that there exists some $j'>j$ such that $y_{j'}=x_j$. 
Since all $f_{x,y}$ with $x\le k$ and $y\ge k+1$ commute pairwisely, we may assume that $j'=j+1$. Now, the image of 
$f_{x_{j+2},y_{j+2}}\ldots f_{x_N,y_N} v_0$ under the action of $f_{x_j,y_j}f_{x_{j+1},x_j}$ coincides with its image under the action 
of $f_{x_{j+1},y_j}$, i.e. we may replace $f_{x_j,y_j}f_{x_{j+1},y_{j+1}}$ with $f_{x_{j+1},y_j}$ and obtain a monomial 
$M'$ of no greater $\deg^A$ (due to inequality (A)) such that $M'v_0=Mv_0$. 

If $y_j<k+1$, then we may assume that $x_{j+1}=y_j$ and define $M'$ of no greater degree by replacing 
$f_{x_j,y_j}f_{x_{j+1},y_{j+1}}$ with $f_{x_j,y_{j+1}}$.

Now observe that a product of the form $f_{x,y}f_{x,y'}$ or $f_{x,y}f_{x',y}$ annihilates $L_{\om_k}$. Therefore, with the mentioned commutativity taken into account, all $x_j$ as well as all $y_j$ are pairwise distinct. This means that $N=l$ and the set $\{x_1,\ldots,x_N\}$ is precisely $\{p_1,\ldots,p_l\}$ while $\{y_1,\ldots,y_N\}$ is precisely $\{q_1,\ldots,q_l\}$. Now suppose that we have $x_j\le x_{j'}$ and $y_j\le y_{j'}$ for some $j\neq j'$. Then we may replace $f_{x_j,y_j}f_{x_{j'},y_{j'}}$ with $f_{x_j,y_{j'}}f_{x_{j'},y_j}$ and obtain a monomial that is of no greater degree (due to inequality (B)) and once again sends $v_{\om_k}$ to $\pm e_{i_1,\ldots,i_k}$. Repeating this procedure we will eventually obtain a monomial in which either $x_j> x_{j'}$ or $y_j> y_{j'}$ for any $j\neq j'$, i.e. precisely the monomial $f_{p_1,q_1}\ldots f_{p_l,q_l}$.
\end{proof}

The above proposition is directly related to FFLV bases and PBW tableaux. Indeed, the discussion preceding Lemma~\ref{fflvtab} implies 
that if $Z$ is the unique one-column PBW tableau with content $\{i_1,\ldots,i_k\}$, then $\tau(Z)_{p_j,q_j}=1$ for all $1\le j\le l$ 
while all other $\tau(Z)_{i,j}=0$. Therefore, Proposition~\ref{antichainmin} tells us that the basis $\{e^A_{i_1,\ldots,i_k}\}$ 
coincides (up to sign change) with $$\left\{\left(\prod (f_{i,j}^A)^{T_{i,j}}\right)v_{\om_k}^A\mid T\in\Pi_{\om_k}\right\}.$$ 
We will denote the FFLV pattern $\tau(Z)$ corresponding to the basis vector $e^A_{i_1,\ldots,i_k}$ via $T(e^A_{i_1,\ldots,i_k})$.

We next prove
\begin{proposition}\label{fflvlinind}
For every $T\in\Pi_\la$ choose a monomial $M_T\in\mU^A$ of the form $\prod (f_{i,j}^A)^{T_{i,j}}$ where the factors are ordered arbitrarily. The set $\{M_Tw_\la^A\mid T\in\Pi_\la\}$ is linearly independent. 
\end{proposition}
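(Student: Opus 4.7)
The plan is to reduce Proposition \ref{fflvlinind} to the corresponding abelian statement by introducing a second, PBW-length filtration on $\mU^A$. I would equip $\mU^A$ with the filtration
\[ \mU^A_{\le m} = \spanned\bigl\{f^A_{i_1,j_1}\cdots f^A_{i_k,j_k} : k \le m\bigr\}. \]
The commutation relations (\ref{Acomm}) show that the bracket of any two generators is itself a single generator and hence of strictly lower PBW length, so the associated graded algebra is canonically the symmetric algebra: $\gr \mU^A \cong \mathcal S^*(\mathfrak n_-) = \mU(\mathfrak n_-^a)$. This filtration descends to each fundamental representation via $(L_{\om_k}^A)_{\le m} = \mU^A_{\le m}\, v_{\om_k}^A$, and one checks that $\gr L_{\om_k}^A \cong L_{\om_k}^a$ as $\mU(\mathfrak n_-^a)$-modules. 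Passing to tensor products then yields $\gr U_\la^A \cong U_\la^a$, with $w_\la^A$ (in filtration degree zero) lifting to $w_\la^a$ in the associated graded.

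The key observation is that $M_T w_\la^A$ lies in filtration degree at most $|T| = \sum_{i,j} T_{i,j}$, and its image in the degree-$|T|$ piece of $\gr U_\la^A \cong U_\la^a$ is precisely the abelian FFLV vector $\bigl(\prod_{i,j}(f_{i,j}^a)^{T_{i,j}}\bigr) w_\la^a$. Indeed, any reordering of the factors of $M_T$ via the relations (\ref{Acomm}) introduces only corrections of strictly lower PBW length, which vanish in the associated graded; and the coproduct of $\mU^A$ descends to the ordinary cocommutative coproduct on $\mU(\mathfrak n_-^a)$. By the FFLV basis theorem recalled in Section \ref{abelian}, together with the abelian version of Lemma \ref{tensor} stated there, the vectors $\bigl\{\bigl(\prod_{i,j}(f_{i,j}^a)^{T_{i,j}}\bigr) w_\la^a : T \in \Pi_\la\bigr\}$ form a basis of $\mU(\mathfrak n_-^a)\, w_\la^a \subset U_\la^a$ and are in particular linearly independent. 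The standard fact that a collection of vectors in a filtered vector space is linearly independent whenever the corresponding images in the associated graded are linearly independent then yields the proposition.

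The main point requiring verification is the identification $\gr L_{\om_k}^A \cong L_{\om_k}^a$. I would prove this by showing that for each basis vector $e^A_{i_1,\ldots,i_k}$, the minimal $m$ with $e^A_{i_1,\ldots,i_k} \in \mU^A_{\le m}\, v_{\om_k}^A$ equals the cardinality of the antichain $T(e^A_{i_1,\ldots,i_k})$ from Proposition \ref{antichainmin}. This matches the PBW length of $e^a_{i_1,\ldots,i_k}$ in $L_{\om_k}^a$, so the graded dimensions coincide and the natural cyclic surjection $L_{\om_k}^a \twoheadrightarrow \gr L_{\om_k}^A$ is an isomorphism. With this step in hand, the rest of the argument is essentially formal.
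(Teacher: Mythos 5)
Your reduction hinges on the identification $\gr L_{\om_k}^A \cong L_{\om_k}^a$ as $\mU(\mathfrak n_-^a)$-modules (where $\gr$ is with respect to the PBW-length filtration), and this identification is false whenever $A$ has some strict type-(B) inequalities — in particular for $A$ in the interior of $\mathcal K$. Take $n=4$, $k=2$. Then $e^A_{1,4}$ sits in PBW-length filtration degree $1$, and $f^A_{1,3}e^A_{1,4}$ is the image of $\pm e_{3,4}$ in $(L_{\om_2})_{a_{1,3}+a_{2,4}}/(L_{\om_2})_{a_{1,3}+a_{2,4}-1}$. Since $s^A_{3,4}=a_{1,4}+a_{2,3}$ and the (B)-inequality $a_{1,3}+a_{2,4}\ge a_{1,4}+a_{2,3}$ is strict for generic $A$, one has $e_{3,4}\in(L_{\om_2})_{a_{1,3}+a_{2,4}-1}$, so $f^A_{1,3}e^A_{1,4}=0$. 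Consequently $f^a_{1,3}$ kills the class $[e^A_{1,4}]$ in $\gr L_{\om_2}^A$, whereas $f^a_{1,3}e^a_{1,4}=\pm e^a_{3,4}\neq 0$ in $L_{\om_2}^a$. So the two $\mU(\mathfrak n_-^a)$-modules have incomparable annihilators of their cyclic vectors (for instance $f^a_{1,3}f^a_{2,4}$ annihilates $\overline{v_{\om_2}^A}$ but not $v_{\om_2}^a$), and the ``natural cyclic surjection'' you invoke does not exist in either direction. Matching graded dimensions is not enough.

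This breaks the ``key observation'' in a computable way. Again with $n=4$, $\la=2\om_2$, and $T$ the pattern with $T_{1,3}=T_{2,4}=1$, one finds (for generic $A$)
\[
M_T w_\la^A = \pm\bigl(e^A_{1,4}\otimes e^A_{2,3}+e^A_{2,3}\otimes e^A_{1,4}\bigr),
\]
while
\[
M_T^a w_\la^a = \pm\bigl(e^a_{3,4}\otimes v_{\om_2}^a + v_{\om_2}^a\otimes e^a_{3,4} + e^a_{1,4}\otimes e^a_{2,3}+e^a_{2,3}\otimes e^a_{1,4}\bigr),
\]
and all four summands on the right have PBW length $2$. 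So the image of $M_T w_\la^A$ in the degree-$|T|$ piece of $\gr U_\la^A$ is \emph{not} $M_T^a w_\la^a$; the ``chain'' terms $e^a_{3,4}\otimes v_{\om_2}^a$ and $v_{\om_2}^a\otimes e^a_{3,4}$ are present abelianly but absent for generic $A$. The PBW-length grading is too coarse: it cannot separate $e^A_{1,4}\otimes e^A_{2,3}$ from $e^A_{3,4}\otimes v_{\om_2}^A$. The paper's actual argument avoids exactly this by grading $U_\la^A$ not by total PBW length but by the finer decomposition into the spaces $(U_\la^A)_T$ indexed by FFLV patterns $T\in\Pi_\la$, and then using a lexicographic-type total order $\preceq$ on such $T$ to isolate a genuine leading term of $M_Tw_\la^A$ lying in $(U_\la^A)_T$. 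To repair your argument you would need a filtration at least as fine as this $\preceq$-grading, at which point you have essentially reproduced the paper's proof rather than reduced to the abelian case.
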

\begin{proof}
The space $U_\la^A$ has a basis comprised of all vectors of the form $e=e^1\otimes\ldots\otimes e^{a_1+\ldots+a_{n-1}}$ where the first $a_1$ factors are of the form $e^A_i\in L_{\om_1}^A$, the next $a_2$ are of the form $e^A_{i_1,i_2}\in L_{\om_2}^A$ and so on. With each such basis vector $e$ we may associate $T(e)\in\Pi_\la$. Indeed, set $T(e)=T(e^1)+\ldots+T(e^{a_1+\ldots+a_{n-1}})$ (a sum of points in $\mathbb R^{n\choose 2}$). We have $T(e)\in\Pi_\la$ in view of the Minkowski property Lemma~\ref{minkowski}. We have decomposed $U_\la^A$ into the direct sum of spaces $$(U_\la^A)_T=\bigoplus_{T(e)=T}\mathbb Ce$$ with $T$ ranging over $\Pi_\la$.

Next, let us define a partial order on the set of all number triangles $T=\{T_{i,j}\mid 1\le i<j\le n\}$ and, in particular, on $\Pi_\la$. For such a triangle $T$ let $\gamma(T)$ be the sequence of all elements $T_{i,j}$ ordered first by $i+j$ increasing and then by $i$ increasing, i.e. from left to right and within one vertical column from the bottom up. We then write $T_1\preceq T_2$ whenever $\gamma(T_1)$ is no greater than $\gamma(T_2)$ lexicographically, that is $T_1=T_2$ or for the least $i$ such that $\gamma(T_1)_i\neq\gamma(T_2)_i$ one has $\gamma(T_1)_i<\gamma(T_2)_i$. Note that the order $\preceq$ is additive: $T_1\preceq T_2$ and $T_3\preceq T_4$ imply $T_1+T_3\preceq T_2+T_4$.

This partial order induces a partial order on monomials in $\mU^A$ by setting $$\prod (f_{i,j}^A)^{T^1_{i,j}}\preceq \prod (f_{i,j}^A)^{T^2_{i,j}}$$ whenever $T^1\preceq T^2$ for arbitrary orderings of the factors. This order is multiplicative.

Consider some $1\le i_1<\ldots<i_k\le n$ and a monomial $M$ with $Mv^A_{\om_k}=\pm e_{i_1,\ldots,i_k}^A$. First, we must have $\deg^A M=s^A_{i_1,\ldots,i_k}$. Second, the proof of Proposition~\ref{antichainmin} shows that the monomial $\prod (f_{i,j}^A)^{T(e^A_{i_1,\ldots,i_k})_{i,j}}$ can be obtained from $M$ by replacing $f^A_{i,j}f^A_{j,l}$ with $f_{i,l}^A$ for $i<j<l$, replacing $f^A_{i,j}f^A_{l,m}$ with $f^A_{i,m}f^A_{l,j}$ for $i<l<j<m$ and commuting the factors. None of these operations increase the monomial with respect to $\preceq$, i.e. $M\succeq \prod (f_{i,j}^A)^{T(e^A_{i_1,\ldots,i_k})_{i,j}}$.

Now consider $T\in\Pi_\la$ and the vector $M_T w_\la^A$. This vector decomposes as 
\begin{multline}\label{tensordecomp}
M_T w_\la^A=\\\sum_{\substack{T^{1,1}+\ldots+T^{1,a_1}+\ldots+\\T^{n-1,1}+\ldots+T^{n-1,a_{n-1}}=T}}\bigg(\left(\prod (f_{i,j}^A)^{T^{1,1}_{i,j}}v_{\om_1}^A\right)\otimes\ldots\otimes\left(\prod (f_{i,j}^A)^{T^{1,a_1}_{i,j}}v_{\om_1}^A\right)\otimes\ldots\otimes\\\left(\prod (f_{i,j}^A)^{T^{n-1,1}_{i,j}}v_{\om_{n-1}}^A\right)\otimes\ldots\otimes\left(\prod (f_{i,j}^A)^{T^{n-1,a_{n-1}}_{i,j}}v_{\om_{n-1}}^A\right)\bigg).
\end{multline}
Here the sum ranges over all decompositions of $T$ into a sum of number triangles $T^{k,l}$ with non-negative integer elements over all $1\le k\le n-1$ and $1\le l\le a_k$. The set of all factors $f_{i,j}$ over all the products $\prod (f_{i,j}^A)^{T^{k,l}_{i,j}}$ is in one-to-one correspondence with the factors in $M_T$ and in each such product they are ordered in the same way as in $M_T$. Suppose 
$\prod (f_{i,j}^A)^{T^{k,l}_{i,j}}v_{\om_1}^A$ is nonzero and equal to $\pm e_{i_1,\ldots,i_k}^A$. This implies that 
$\deg^A(\prod (f_{i,j}^A)^{T^{k,l}_{i,j}})=s_{i_1,\ldots,i_k}^A$ and, in view of the discussion above, that 
$T^{k,l}\succeq T(e_{i_1,\ldots,i_k}^A)$. This means that every summand in the right-hand side of~(\ref{tensordecomp}) is contained in some 
$(U_\la^A)_{T'}$ with $T'\preceq T$. Moreover, the Minkowski property implies that for at least one summand we have $T^{k,l}\in\Pi_{\om_k}$ for all $k$ and $l$, i.e. at least one of the summands is contained in $(U_\la^A)_T$. The linear independence follows.
\end{proof}

We have thus shown that $\dim(\mU^Aw_\la^A)\ge|\Pi_\la|=\dim L_\la=\dim L_\la^A$. To prove Lemma~\ref{tensor} it now suffices to show the existence of a surjective  homomorphism from $L_\la^A$ to $\mU^Aw_\la^A$ taking $v_\la^A$ to $w_\la^A$.
\begin{proof}[Proof of Lemma~\ref{tensor}]
We are to show that for every $S\in\mU^A$ we have $S w_\la^A=0$ whenever $S v_\la^A=0$. Indeed, consider some $S$ with $S v_\la^A=0$, we may assume that $S\in \mU^A_m$ for some integer $m$.

The relation $S v_\la^A=0$ means that there exists some $S'\in\mU$ such that $S'v_\la=0$ and $S'=S_0+S_1$ with $S_0$ being obtained from $S$ by simply removing all the $^A$ superscripts and $S_1\in\mU_{m-1}$. We then have $S'w_\la=0\in U_\la$.

$U_\la$ has a basis comprised of all vectors of the form $e=e^1\otimes\ldots\otimes e^{a_1+\ldots+a_{n-1}}$ where the first $a_1$ factors are of the form $e_i\in L_{\om_1}$, the next $a_2$ are of the form $e_{i_1,i_2}\in L_{\om_2}$ and so on. For such an $e$ suppose that some $e^i$ is equal to $e_{i_1,\ldots,i_k}$ and let $m_i$ be the least integer such that $e_{i_1,\ldots,i_k}\in (L_{\om_k})_{m_i}$, i.e. $m_i=s^A_{i_1,\ldots,i_k}$. We then set $m^A(e)=m_1+\ldots+m_{a_1+\ldots+a_{n-1}}$ and obtain the decomposition of $U_\la$ into a direct sum of the spaces $$(U_\la)_N=\bigoplus_{m^A(e)=N}\mathbb Ce.$$

Now consider the map $\Psi$ from $U_\la$ to $U_\la^A$ obtained as the tensor product of maps from $L_{\om_k}$ to $L_{\om_k}^A$ taking $e_{i_1,\ldots,i_k}$ to $e_{i_1,\ldots,i_k}^A$. Let $M\in\mU$ be a monomial with $\deg^A M=m$ and $M^A\in\mU^A$ be the monomial obtained from $M$ by adding $^A$ superscripts to all the factors. By considering the decompositions of $Mw_\la$ and $M^Aw_\la^A$ in the respective bases, one sees that $Mw_\la\in\bigoplus_{m'\le m}(U_\la)_{m'}$ and that $M^Aw_\la^A=\Psi((Mw_\la)_m)$ with $(.)_m$ denoting the projection onto $(U_\la)_m$. In particular, $(S_1w_\la)_m=0$ and, therefore, $$S w_\la^A=\Psi((S_0w_\la)_m)=\Psi((S'w_\la)_m)=0.$$ 
\end{proof}

Together Proposition~\ref{fflvlinind} and Lemma~\ref{tensor} have the following implication, that connects FFLV patterns with bases of representations.
\begin{cor}
For every $T\in\Pi_\la$ choose a monomial $M_T\in\mU^A$ of the form $\prod (f_{i,j}^A)^{T_{i,j}}$ where the factors are ordered arbitrarily. The set $\{M_Tv_\la^A\mid T\in\Pi_\la\}$ constitutes a basis in $L_\la^A$.
\end{cor}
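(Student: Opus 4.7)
The plan is to combine the two preceding results — Proposition~\ref{fflvlinind} and Lemma~\ref{tensor} — and close the argument with a short dimension count. The whole work has already been done in those two results; the corollary is essentially a transport-of-structure observation.

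First I would invoke Lemma~\ref{tensor} to obtain a $\mU^A$-equivariant isomorphism $\Phi\colon L_\la^A \to \mU^A w_\la^A$ sending $v_\la^A$ to $w_\la^A$. By equivariance, $\Phi(M_T v_\la^A) = M_T w_\la^A$ for every choice of monomial $M_T$ of the prescribed form. Next I would apply Proposition~\ref{fflvlinind}, which asserts that the vectors $\{M_T w_\la^A \mid T\in\Pi_\la\}$ are linearly independent in $U_\la^A$ (and hence in the subspace $\mU^A w_\la^A \subset U_\la^A$). Pulling back through $\Phi^{-1}$, the set $\{M_T v_\la^A \mid T\in\Pi_\la\}$ is linearly independent in $L_\la^A$.

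It remains to check that this linearly independent set has cardinality equal to $\dim L_\la^A$. Since $L_\la^A$ is by definition the associated graded space of a $\mathbb Z$-filtration on $L_\la$, we have $\dim L_\la^A = \dim L_\la$. The FFLV theorem recalled before Lemma~\ref{minkowski} gives a basis of $L_\la^a$ indexed by $\Pi_\la$, so $|\Pi_\la| = \dim L_\la^a = \dim L_\la = \dim L_\la^A$. Therefore $\{M_T v_\la^A \mid T\in\Pi_\la\}$ is a basis of $L_\la^A$, as claimed.

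There is no genuine obstacle to this short proof — the difficulties (the careful ordering-and-degeneration argument using the additive partial order $\preceq$, and the reduction to a classical identity in $\mU$) have already been overcome in the proofs of Proposition~\ref{fflvlinind} and Lemma~\ref{tensor}. If anything, the only point worth emphasizing is that the conclusion is independent of the ordering of the factors chosen inside each $M_T$: two different orderings differ by commutators of the form \eqref{Acomm}, which strictly decrease the monomial with respect to $\preceq$, so the leading term in the decomposition~(\ref{tensordecomp}) — and consequently the independence argument — is insensitive to this choice.
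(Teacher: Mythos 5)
Your proof is correct and takes the same route the paper intends: the corollary is stated as a direct consequence of Proposition~\ref{fflvlinind} and Lemma~\ref{tensor}, and your combination of the two (transporting independence through the isomorphism of Lemma~\ref{tensor} and then counting dimensions via $|\Pi_\la|=\dim L_\la=\dim L_\la^A$) is exactly the intended argument. The closing remark about the insensitivity to the ordering is sound but already built into Proposition~\ref{fflvlinind}, which is itself stated for arbitrary orderings.
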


We are now ready to prove Theorem~\ref{main}.
\begin{proof}[Proof of Theorem~\ref{main}]
Recall the $\mathfrak h^*$-grading $\deg$ of $R_{\bf d}^A$ given by $\deg X_{i_1,\ldots,i_k}=\om_k$. 
We are to show that the $\deg$-homogeneous ideal $J$ spanned by all $\deg$-homogeneous polynomials $X\in R_{\bf d}^A$ 
vanishing on $N^A y_\la^A$ is precisely $I_{\bf d}^A$. However, the commutation relations~(\ref{Acomm}) easily imply that the Lie algebra 
$\mathfrak n_-^A$ is nilpotent which means that its exponential map to $N^A$ is surjective (in view of the simply connectedness, 
it is bijective).  Therefore, $N^A y_\la^A=\exp(\mathfrak n_-^A)y_\la^A$.

For any $f\in\mathfrak n_-^A$ the point
$\exp(f)y_\la^A\in\mathbb P(U_\la^A)$ corresponds to the line $$\mathbb C(\exp(f)v_{\om_1}^A)^{\otimes a_1}\otimes\ldots\otimes(\exp(f)v_{\om_{n-1}}^A)^{\otimes a_{n-1}}$$ 
and, by the definition of the Segre embedding, coincides with the point 
$$(\exp(f)u_{\om_1}^A)^{a_1}\times\ldots\times(\exp(f)u_{\om_{n-1}}^A)^{a_{n-1}}\in
\mathbb P(L_{\om_1}^A)^{a_1}\times\ldots\times\mathbb P(L_{\om_{n-1}}^A)^{a_{n-1}}$$ 
and with the point $$(\exp(f)u_{\om_1}^A)\times\ldots\times(\exp(f)u_{\om_{n-1}}^A)\in \mathbb P_{\bf d}^A.$$

Let us denote $C^A_{i_1,\ldots,i_k}(\{c_{i,j}\})$ the coordinate of $\exp(f)v_{\om_k}^A$ corresponding to the basis vector 
$e^A_{i_1,\ldots,i_k}$ where $f=\sum c_{i,j}f_{i,j}^A$. Via the standard expansion of the action of $\exp(f)$ on $L_{\om_k}$ as 
a power series in the action of $f$ one sees that each $C^A_{i_1,\ldots,i_k}(\{c_{i,j}\})$ depends polynomially on the $c_{i,j}$. 
We may, therefore, view $C^A_{i_1,\ldots,i_k}$ as an element of $\mathbb C[\{z_{i,j}\}]$ with 
$1\le i<j\le n$. Let us introduce $n-1$ additional indeterminates $z_1,\ldots,z_{n-1}$. Then $J$ is precisely the kernel of the 
homomorphism $\psi^A:R_{\bf d}^A\to \mathbb C[\{z_{i,j},z_l\}]$ mapping $X_{i_1,\ldots,i_k}^A$ to $z_k C^A_{i_1,\ldots,i_k}$. 

Next, for $f'=\sum_{i,j}c_{i,j}f_{i,j}\in\mathfrak n_-$ denote $C_{i_1,\ldots,i_k}(\{c_{i,j}\})$ the coordinate of 
$\exp(f')v_{\om_k}$ (for $\exp:\mathfrak n_-\to N$) corresponding to the basis vector $e_{i_1,\ldots,i_k}$. The same reasoning shows that 
$C_{i_1,\ldots,i_k}$ also depends polynomially on the $c_{i,j}$ and we view $C_{i_1,\ldots,i_k}$ as an element of 
$\mathbb C[\{z_{i,j}\}]$. One may then observe that $C^A_{i_1,\ldots,i_k}$ is the initial part of $C_{i_1,\ldots,i_k}$ with respect to the grading $\grad_z^A$ defined by $\grad_z^A(z_{i,j})=a_{i,j}$. Note that this initial part is the component of grading 
$s_{i_1,\ldots,i_k}^A$ in $C_{i_1,\ldots,i_k}$.

For any $\grad^A$-homogeneous $X\in I_{\bf d}^A$ we have a relation $X'\in I_{\bf d}$ such that $\initial_{\grad^A}(\varphi(X'))=X$. On one hand, in view of Theorem~\ref{classicalplucker}, $\psi(X')=0$ where $\psi$ is the homomorphism from $R_{\bf d}$ to $\mathbb C[\{z_{i,j},z_l\}]$ mapping $X_{i_1,\ldots,i_k}$ to $z_k C_{i_1,\ldots,i_k}$. On the other hand, $\psi^A(X)$ is the initial part (the component of grading 
$\grad^A(X)$) of $\psi(X')$ with respect to $\grad_z^A$ where $\grad_z^A$ is the extension of $\grad^A$ by $\grad_z^Az_l=0$, 
i.e. $\psi^A(X)=0$. This shows that $I_{\bf d}^A\subset J$.

The ring $\mathbb C[\{z_{i,j},z_l\}]$ is $\mathfrak h^*$-graded by $\deg z_k=\om_k$ and $\deg z_{i,j}=0$; the homomorphisms 
$\psi$ and $\psi^A$ are $\deg$-homogeneous. To prove the reverse inclusion $J\subset I_{\bf d}^A$ we show that the dimensions of the homogeneous components of the image of $\psi^A$ are no less than the corresponding dimensions for the ring $R_{\bf d}^A/I_{\bf d}^A$. 

For a weight $\mu\in \mathbb Z_{\ge 0}\{\om_{d_1},\ldots,\om_{d_s}\}$ the component of degree $\mu$ has dimension $\dim L_\mu$ which is simply due to $I_{\bf d}^A$ being an initial ideal of $I_{\bf d}$. We extend the notations from subsection~\ref{tableaux} by denoting $X^A(Z)$ the indeterminate $X_{i_1,\ldots,i_k}$ for the unique PBW tableau $Z$ of shape $\om_k$ and content $\{i_1,\ldots,i_k\}$. For a PBW tableau $Y$ of shape $\mu$ with columns $Z_1,\ldots,Z_{\mu_1}$ we denote $X^A(Y)=X^A(Z_1)\ldots X^A(Z_{\mu_1})$. We prove the announced inequality by showing that the images under $\psi^A$ of the monomials $X^A(Y)$ with $Y\in\mathcal Y_\mu$ are linearly independent.

Let us take a closer look at the polynomial $C^A_{i_1,\ldots,i_k}$: it is the sum of products $\pm z_{i_1,j_1}\ldots z_{i_N,j_N}/N!$ over all sequences $(i_1,j_1),\ldots,(i_N,j_N)$ such that $$\deg^A(f_{i_1,j_1}^A\ldots f_{i_N,j_N}^A)=s^A_{i_1,\ldots,i_k}$$ and $$(f_{i_1,j_1}^A\ldots f_{i_N,j_N}^A)v^A_{\om_k}=\pm e_{i_1,\ldots,i_k}^A.$$ In particular, we see that the monomial $\prod z_{i,j}^{T(e_{i_1,\ldots,i_k}^A)_{i,j}}$ appears with coefficient $(-1)^{\sigma_{i_1,\ldots,i_k}}$ for $\sigma_{i_1,\ldots,i_k}\in S_k$ such that $(i_{\sigma_{i_1,\ldots,i_k}(1)},\dots,i_{\sigma_{i_1,\ldots,i_k}(k)})$ is a PBW tableau. The partial order $\preceq$ from the proof of Proposition~\ref{fflvlinind} induces a monomial order on $\mathbb C[\{z_{i,j}\}]$ by setting $$z_{i_1,j_1}\ldots z_{i_N,j_N}\preceq z_{i'_1,j'_1}\ldots z_{i'_{N'},j'_{N'}}$$ whenever $$f_{i_1,j_1}^A\ldots f_{i_N,j_N}^A\preceq f_{i'_1,j'_1}^A\ldots f^A_{i'_{N'},j'_{N'}}.$$ The discussion in the proof of Proposition~\ref{fflvlinind} implies that $(-1)^{\sigma_{i_1,\ldots,i_k}}\prod z_{i,j}^{T(e_{i_1,\ldots,i_k}^A)_{i,j}}$ is the initial term in $C^A_{i_1,\ldots,i_k}$ with respect to monomial order $\preceq$.

We extend $\preceq$ to a multiplicative partial order (also $\preceq$) on the set of monomials in $\mathbb C[\{z_{i,j},z_l\}]$ 
by comparing the images of these monomials in $\mathbb C[\{z_{i,j}\}]$ under the map sending every $z_l$ to 1 and every $z_{i,j}$ 
to itself. If we denote $b_1,\ldots,b_{n-1}$ the coordinates of $\mu$ in the basis of fundamental weights, then we see that 
$\pm\prod z_l^{b_l}\prod z_{i,j}^{\tau(Y)_{i,j}}$ is the unique $\preceq$-minimal term in $\psi^A(X^A(Y))$. Since these monomials 
are distinct for distinct $Y$, the linear independence of the polynomials $X^A(Y)$ with $Y\in\mathcal Y_\mu$ follows.
\end{proof}

The above proof has the following implication.
\begin{cor}\label{Apbwssyt}
For $\mu\in \mathbb Z_{\ge 0}\{\om_{d_1},\ldots,\om_{d_s}\}$, the images of the monomials $X^A(Y)$ with $Y$ ranging over $\mathcal Y_\mu$ comprise a basis in the component of $R_{\bf d}^A/I_{\bf d}^A$ of homogeneity degree $\mu$.
\end{cor}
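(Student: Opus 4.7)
The plan is to assemble the corollary directly from the three ingredients already established in the proof of Theorem~\ref{main}, namely: (i) the identification $I_{\bf d}^A=J=\ker\psi^A$; (ii) the linear independence of the polynomials $\psi^A(X^A(Y))$ for $Y\in\mathcal Y_\mu$, proved via the $\preceq$-minimal term argument at the end of that proof; and (iii) the count $|\mathcal Y_\mu|=\dim L_\mu$ obtained from Theorem~\ref{abpbwssyt}.

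First I would note that, because $I_{\bf d}^A$ is an initial ideal of the $\deg$-homogeneous ideal $\varphi^A(I_{\bf d})$ with respect to the grading $\grad^A$, the corresponding homogeneous components have equal dimension, and thus
\[
\dim\bigl(R_{\bf d}^A/I_{\bf d}^A\bigr)_\mu
=\dim\bigl(R_{\bf d}/I_{\bf d}\bigr)_\mu
=\dim L_\mu,
\]
where the last equality is the final remark after Theorem~\ref{classicalplucker}. Thus the degree-$\mu$ component of $R_{\bf d}^A/I_{\bf d}^A$ has dimension exactly $|\mathcal Y_\mu|$ by Theorem~\ref{abpbwssyt}, so it suffices to exhibit that many linearly independent classes.

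Next I would transport the linear independence from $\mathbb C[\{z_{i,j},z_l\}]$ back into the quotient. The proof of Theorem~\ref{main} showed that the polynomial $\psi^A(X^A(Y))$ has $\prod z_l^{b_l}\prod z_{i,j}^{\tau(Y)_{i,j}}$ as its unique $\preceq$-minimal monomial, where $b_1,\ldots,b_{n-1}$ are the fundamental-weight coordinates of $\mu$; since by Lemma~\ref{fflvtab} the map $\tau$ is injective on $\mathcal Y_\mu$, these minimal monomials are pairwise distinct, giving linear independence of $\{\psi^A(X^A(Y))\mid Y\in\mathcal Y_\mu\}$. Because $\psi^A$ factors through the isomorphism $R_{\bf d}^A/I_{\bf d}^A\cong\mathrm{Im}(\psi^A)$ (as $I_{\bf d}^A=J=\ker\psi^A$), the residue classes of the $X^A(Y)$ in $R_{\bf d}^A/I_{\bf d}^A$ are likewise linearly independent.

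Combining the two paragraphs, the classes $\{X^A(Y)+I_{\bf d}^A\mid Y\in\mathcal Y_\mu\}$ are $\dim L_\mu$ linearly independent vectors in a space of dimension $\dim L_\mu$, hence a basis. There is essentially no obstacle: the hardest input, the $\preceq$-minimal-term identification, has already been carried out inside the proof of Theorem~\ref{main}, and the only thing left to check is that the map $\mathcal Y_\mu\to\mathbb Z^{n\choose 2}$ sending $Y$ to $\tau(Y)$ (together with the fixed weight data $b_l$) separates tableaux, which is immediate from Lemma~\ref{fflvtab}.
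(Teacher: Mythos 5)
Your proposal is correct and assembles the corollary from exactly the same ingredients the paper intends: the dimension count $\dim(R_{\bf d}^A/I_{\bf d}^A)_\mu=\dim L_\mu=|\mathcal Y_\mu|$, the identification $I_{\bf d}^A=\ker\psi^A$, and the $\preceq$-minimal-monomial argument from the proof of Theorem~\ref{main} giving linear independence of $\{\psi^A(X^A(Y))\}_{Y\in\mathcal Y_\mu}$. This matches the paper's treatment, which presents the corollary as an immediate consequence of that proof.
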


\section{Toric degenerations and monomial annihilating ideals}\label{toric}

The following fact may easily be deduced from the above results.
\begin{theorem}\label{toricthm}
If the weight system $A$ is such that all inequalities of types (a) and (b) (or, equivalently, all inequalities of types (A) and (B)) are strict, then the degenerate flag variety $F_\la^A$ is the toric variety corresponding to the polytope $Q_\la$.
\end{theorem}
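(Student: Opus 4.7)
The strategy is to use the homomorphism $\psi^A\colon R_{\bf d}^A\to\mathbb{C}[\{z_{i,j},z_l\}]$ constructed in the proof of Theorem~\ref{main}, whose kernel equals $I_{\bf d}^A$, and to show that under the strictness hypothesis its image is precisely the homogeneous coordinate ring of the toric variety $X_{Q_\la}$. Combining these two facts will identify $R_{\bf d}^A/I_{\bf d}^A$ with an affine semigroup algebra, and passing to $\mathrm{Proj}$ will give $F_\la^A\cong X_{Q_\la}$.

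The key technical step is to show that, when all inequalities of types (a) and (b) are strict, each polynomial $C^A_{i_1,\ldots,i_k}$ reduces to a single signed monomial. For this I would revisit the proof of Proposition~\ref{antichainmin}: the two reduction moves used there---$f_{x,y}f_{y,z}\mapsto f_{x,z}$ (justified by (A)) and $f_{x,y}f_{x',y'}\mapsto f_{x,y'}f_{x',y}$ for $x<x'<y<y'$ (justified by (B))---each now strictly decrease $\deg^A$, since strict (a) and (b) propagate to strict (A) and (B) by the summation argument in Proposition~\ref{moreineqs}. Consequently, any monomial $M\in\mU$ with $\deg^A M=s^A_{i_1,\ldots,i_k}$ and $Mv_{\om_k}=\pm e_{i_1,\ldots,i_k}$ must be a reordering of the canonical antichain monomial $f_{p_1,q_1}\cdots f_{p_l,q_l}$. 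Moreover, strict (A) rules out the only potentially nonzero commutator between $f^A_{p_i,q_i}$ and $f^A_{p_j,q_j}$ in~(\ref{Acomm}) (namely the case $q_i=p_j$), so these antichain factors pairwise commute in $\mU^A$ and all $l!$ orderings yield the same element there. Summing their contributions gives
\[
C^A_{i_1,\ldots,i_k}=\pm\prod_{(i,j)}z_{i,j}^{T(e^A_{i_1,\ldots,i_k})_{i,j}}.
\]

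It follows that $\psi^A$ sends each generator $X^A_{i_1,\ldots,i_{d_j}}$ to a signed monomial, so its image is the monomial subalgebra of $\mathbb{C}[\{z_{i,j},z_l\}]$ spanned by the monomials $\bigl(\prod_l z_l^{b_l}\bigr)\prod_{(i,j)}z_{i,j}^{\tau(Y)_{i,j}}$ for $\mu=\sum b_l\om_l\in\mathbb{Z}_{\ge 0}\{\om_{d_1},\ldots,\om_{d_s}\}$ and $Y\in\mathcal{Y}_\mu$. Lemma~\ref{fflvtab} shows that $\tau(Y)$ ranges precisely over $\Pi_\mu=Q_\mu\cap\mathbb{Z}^{n\choose 2}$; together with the Minkowski property (Lemma~\ref{minkowski}) and $Q_\mu+Q_\nu=Q_{\mu+\nu}$, this identifies the image with the affine semigroup algebra of the graded lattice semigroup of the cone over $Q_\la$, i.e.\ with the homogeneous coordinate ring of $X_{Q_\la}$. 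Combined with $\ker(\psi^A)=I_{\bf d}^A$ from Theorem~\ref{main}, this gives the claimed isomorphism $F_\la^A\cong X_{Q_\la}$.

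The main obstacle is the first step. One must carefully verify that every possible reduction move in the proof of Proposition~\ref{antichainmin} strictly decreases $\deg^A$ under the strict hypothesis (including degenerate cases such as $y=x'$ or separated pairs), and check that no alternative minimum-degree monomial can sneak in beyond those captured by the reduction procedure. The pairwise commutativity of the antichain factors in $\mU^A$, while essential for the single-monomial conclusion, is a comparatively direct application of~(\ref{Acomm}) together with strict (A).
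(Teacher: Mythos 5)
Your proposal is correct and follows essentially the same line of reasoning as the paper: strictness of (A) and (B) forces each reduction in the proof of Proposition~\ref{antichainmin} to strictly decrease $\deg^A$, so $C^A_{i_1,\ldots,i_k}$ collapses to the single antichain monomial, making the image of $\psi^A$ a semigroup algebra identified with that of $Q_\la$ via Lemma~\ref{minkowski}. One minor inaccuracy: the antichain factors $f^A_{p_i,q_i}$ and $f^A_{p_j,q_j}$ commute for \emph{any} weight system, since $q_i\geq k+1>k\geq p_j$ rules out $q_i=p_j$ regardless of whether (A) is strict; you do not need strictness for that step (though it is true that strict (A) makes all of $\mU^A$ commutative, as noted later in Section~\ref{toric}).
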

\begin{proof}
The proof of Proposition~\ref{antichainmin} shows that the only monomial taking $v_{\om_k}^A$ to some $\pm e^A_{i_1,\ldots,i_k}$ is $\prod (f_{i,j}^A)^{T(e^A_{i_1,\ldots,i_k})_{i,j}}$ (up to a permutation of the commuting factors). The proof of Theorem~\ref{main} then shows that 
\begin{equation}\label{zfflvmon}
\psi^A(X_{i_1,\ldots,i_k})=(-1)^{\sigma_{i_1,\dots,i_k}}z_k\prod z_{i,j}^{T(e^A_{i_1,\ldots,i_k})_{i,j}}.
\end{equation}
When $k=d_j$, the monomials~(\ref{zfflvmon}) generate the coordinate ring of the toric variety given by the polytope $Q_{\om_k}$. However, the image of $\psi ^A$ (which is isomorphic to $R_{\bf d}^A/I_{\bf d}^A$) is generated by the monomials~(\ref{zfflvmon}) with $k$ ranging over $\{d_1,\ldots,d_s\}$ and all possible $1\le i_1<\ldots<i_k\le n$. The theorem now follows from Lemma~\ref{minkowski}.
\end{proof}

Now note that the commutation relations~(\ref{Acomm}) show that the Lie algebra $\mathfrak n_-^A$ and the algebra $\mU^A$ are commutative whenever all the inequalities of type (A) are strict. In other words, we have $\mU^A=\mathbb C[\{f_{i,j}^A\}]$
which allows us to speak of monomial ideals in $\mU^A$.
\begin{theorem}
If the weight system $A$ is such that all inequalities of types (a) and (b) are strict, then the annihilating ideal of $L_\la^A$ in $\mU^A$ is the monomial ideal spanned by the monomials $\prod (f_{i,j}^A)^{S_{i,j}}$ with $S$ ranging over the set $\mathbb Z_{\ge 0}^{\{1\le i<j\le n\}}\backslash\Pi_\la$.
\end{theorem}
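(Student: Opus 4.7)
The plan is to show that $M_S v_\la^A = 0$ precisely when $S \notin \Pi_\la$. Combined with the Corollary following Lemma~\ref{tensor}, which gives $\{M_T v_\la^A : T \in \Pi_\la\}$ as a basis of $L_\la^A$, this forces the annihilator of $v_\la^A$ (equal to $\mathrm{Ann}(L_\la^A)$ since $\mU^A$ is commutative and $L_\la^A$ is cyclic) to be the claimed monomial ideal by a dimension count. The bulk of the work is a Leibniz-style expansion on the tensor product $U_\la^A$ together with checking that no cancellations can occur.

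I would first handle the fundamental case $\la = \om_k$: namely, $M_S v_{\om_k}^A$ is a nonzero multiple of $e^A_{i_1,\ldots,i_k}$ when $S \in \Pi_{\om_k}$ (with subset determined by $S$ under the bijection $\Pi_{\om_k} \leftrightarrow$ subsets of size $k$), and zero otherwise. The nonvanishing direction is Proposition~\ref{antichainmin}. For the vanishing direction I would case-split on how $S$ fails to lie in $\Pi_{\om_k}$: if $S_{i,j} > 0$ with $i > k$ or $j \le k$, then $f_{i,j}^A v_{\om_k}^A = 0$ already; if $S_{i,j} \ge 2$, then $(f_{i,j}^A)^2 v_{\om_k}^A = 0$ directly in $\wedge^k V$; and if the support of $S$ contains two comparable pairs $(i,j) < (i',j')$ both with $i, i' \le k < j, j'$, then using commutativity of $\mU^A$ and the \emph{strict} inequality in (A) (when $i' = j$) or (b) (when $i < i' < j < j'$), the vector $f_{i,j} f_{i',j'} v_{\om_k}$ already lies in some $(L_{\om_k})_m$ with $m < a_{i,j} + a_{i',j'}$, so its image in the associated graded vanishes.

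Next I would transport this to general $\la$ via Lemma~\ref{tensor}, identifying $L_\la^A$ with $\mU^A w_\la^A \subset U_\la^A$. Since $\mathfrak n_-^A$ is abelian, each $f_{i,j}^A$ acts on the tensor product by the standard coproduct, so iterating yields
\[
M_S w_\la^A = \sum_{(S^{(r)})} \prod_{i<j} \binom{S_{i,j}}{S^{(1)}_{i,j},\ldots,S^{(N)}_{i,j}} \bigotimes_{r=1}^N M_{S^{(r)}} v_{\om_{k_r}}^A,
\]
where $N = a_1 + \ldots + a_{n-1}$, $\om_{k_r}$ is the fundamental weight at tensor position $r$, and the sum ranges over all ordered decompositions $S = \sum_r S^{(r)}$ into nonnegative integer arrays. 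By the fundamental case a summand is nonzero iff each $S^{(r)} \in \Pi_{\om_{k_r}}$, and distinct valid ordered decompositions yield distinct tensor products of basis vectors of $U_\la^A$ (since the map $\Pi_{\om_k} \to $ basis of $L_{\om_k}^A$ is a bijection at each position), so the multinomial coefficients being positive, no cancellation can occur. Hence $M_S v_\la^A = 0$ iff no such decomposition of $S$ exists.

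By the Minkowski property (Lemma~\ref{minkowski}), applied to the decomposition $\la = \sum_k a_k \om_k$, the existence of such a decomposition is equivalent to $S \in \Pi_\la$. Finally, $\Pi_\la$ is downward closed under coordinatewise $\le$ (the defining inequalities $S_{i,j} \ge 0$ and $S(x,d) \le M(\la,d)$ are monotone in each entry), so its complement is upward closed and the span of $\{M_S : S \notin \Pi_\la\}$ is automatically an ideal in $\mU^A$; by the dimension count it must equal $\mathrm{Ann}(v_\la^A)$. The main obstacle I expect is the Leibniz expansion with its bookkeeping together with the injectivity observation guaranteeing no cancellation; everything else is a direct chain of prior results (Proposition~\ref{antichainmin}, Lemma~\ref{tensor}, Lemma~\ref{minkowski}, and the Corollary following Lemma~\ref{tensor}).
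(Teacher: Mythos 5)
Your proposal follows essentially the same route as the paper: transport to $U_\la^A$ via Lemma~\ref{tensor}, expand $M_S w_\la^A$ over decompositions $S=\sum_r S^{(r)}$, settle the fundamental case for each $\om_k$, and apply the Minkowski property (Lemma~\ref{minkowski}) to conclude that no decomposition survives when $S\notin\Pi_\la$; the downward-closedness of $\Pi_\la$ and the dimension count then pin down the ideal. (The paper's written proof only carries out the vanishing direction and delegates the rest to the Corollary following Lemma~\ref{tensor}, but that is the same content you spell out.) One minor slip in your case analysis for the fundamental-case vanishing: with $i,i'\le k<j,j'$ the subcase $i'=j$ cannot occur, while the subcases $i=i'$, $j<j'$ and $i<i'$, $j=j'$ (a shared index) are not covered by your appeal to (A) or (b) — those vanish already in $L_{\om_k}$, since a product $f_{x,y}f_{x,y'}$ or $f_{x,y}f_{x',y}$ annihilates $\wedge^k V$. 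The paper handles the fundamental case more uniformly by noting that, by the proof of Proposition~\ref{antichainmin}, the antichain monomial is the \emph{unique} monomial of minimal $\deg^A$ sending $v_{\om_k}$ to $\pm e_{i_1,\ldots,i_k}$, so under strict inequalities any other monomial reaching that vector has strictly larger degree and its image in the associated graded is zero.
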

\begin{proof}
We are to show that every monomial $\prod (f_{i,j}^A)^{S_{i,j}}$ with $S\in\mathbb Z_{\ge 0}^{\{1\le i<j\le n\}}\backslash\Pi_\la$ acts trivially on $L_\la^A$. We do so by invoking Lemma~\ref{tensor} and showing that it acts trivially on $w_\la^A\in U_\la^A$. Indeed,  $\prod (f_{i,j}^A)^{S_{i,j}}(w_\la^A)$ is equal to a sum of expressions of the form 
\begin{multline}\label{tensordecomp1}
\left(\prod (f_{i,j}^A)^{S^{1,1}_{i,j}}(v_{\om_1}^A)\right)\otimes\ldots\otimes\left(\prod (f_{i,j}^A)^{S^{1,a_1}_{i,j}}(v_{\om_1}^A)\right)\otimes\ldots\\\otimes\left(\prod (f_{i,j}^A)^{S^{n-1,1}_{i,j}}(v_{\om_{n-1}}^A)\right)\otimes\ldots\otimes\left(\prod (f_{i,j}^A)^{S^{n-1,a_{n-1}}_{i,j}}(v_{\om_{n-1}}^A)\right)
\end{multline}
where the sum of all $S_{i,j}^{k,l}$ is equal to $S_{i,j}$ for all $i,j$. However, every monomial in the $f_{i,j}^A$ takes $v_{\om_k}^A$ to a vector of the form $\pm e_{i_1,\ldots,i_k}^A$ and the  only monomial taking $v_{\om_k}^A$ to $\pm e^A_{i_1,\ldots,i_k}$ is $\prod (f_{i,j}^A)^{T(e^A_{i_1,\ldots,i_k})_{i,j}}$. Hence, every monomial $\prod (f_{i,j}^A)^{T_{i,j}}$ with $T\notin\Pi_{\om_k}$ annihilates $v_{\om_k}^A$. Therefore,~(\ref{tensordecomp1}) is nonzero only if every $\{S^{k,l}_{i,j}\}_{1\le i<j\le n}\in\Pi_{\om_k}$ which is impossible in view of 
$S\notin\Pi_\la$.
\end{proof}

We have shown that the same combinatorial conditions (which hold for a ``generic'' weight system) are sufficient for both $F_\la^A$ being the toric variety given by $Q_\la$ and for the annihilating ideal of $L_\la^A$ being spanned by the monomials with exponent vectors outside of $Q_\la$. This is not a coincidence, a slightly closer look at the proof of Theorem~\ref{main} shows that whenever the annihilating ideal of $L_\la^A$ is as above, $F_\la^A$ is the toric variety in question.

\begin{remark}
Recall that in \cite{FFR} the authors construct a weight system $A$ such that the annihilating ideal of a cyclic vector of
$L_\lambda^A$ is monomial. This weight system satisfies all the inequalities of types (A) and (B).  
\end{remark}

\section{The cone of degenerations}\label{cone}

Conditions (a) and (b) define a polyhedral cone $\mathcal K$ in $\mathbb R^{\{1\le i<j\le n\}}$. One sees that $\mathcal K$ is the product of the linear subspace of dimension $n-1$ given by $a_{i,j}=a_{i,i+1}+\ldots+a_{j-1,j}$ for all $1\le i<j\le n$ and of the simplicial cone of dimension ${n-1\choose 2}$ given by all $a_{i,i+1}=0$ and all inequalities of types (a) and (b). We show that, in a sense, all the degenerations that are obtained in this paper are parametrized by the faces of $\mathcal K$.

\begin{proposition}\label{facesame}
If the minimal face of $\mathcal K$ containing weight system $A$ coincides with the minimal face of $\mathcal K$ containing some weight system $B=\{b_{i,j}\}$, then the following hold.
\begin{enumerate}[label=\roman*)]
\item The map $\Theta:\mathfrak n_-^A\to \mathfrak n_-^B$ taking $f_{i,j}^A$ to $f_{i,j}^B$ is an isomorphism of Lie algebras.
\item The representations $L_\la^A$ and $L_\la^B$ are isomorphic (where $\mathfrak n_-^A$ and $\mathfrak n_-^B$ are identified by $\Theta$).
\item The varieties $F_\la^A$ and $F_\la^B$ coincide as subvarieties in $\mathbb P_{\bf d}^A$ (where $\mathbb P_{\bf d}^B$ is identified with $\mathbb P_{\bf d}^A$ via the isomorphisms between $L_{\om_k}^A$ and $L_{\om_k}^B$).
\end{enumerate}
\end{proposition}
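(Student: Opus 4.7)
The plan rests on a single face-theoretic observation: for any linear function $L$ on $\mathbb R^{\{1\le i<j\le n\}}$ with $L\ge 0$ on $\mathcal K$, the set $\{L=0\}\cap\mathcal K$ is a face of $\mathcal K$. Consequently, if $L(A)=0$ then minimality of $H_A$ forces $H_A\subseteq\{L=0\}$, hence $L(B)=0$ as well. My strategy is to argue that each relevant piece of structure --- the Lie bracket on $\mathfrak n_-^A$, the $\mathfrak n_-^A$-action on each $L_{\om_k}^A$, and the group orbit defining $F_\la^A$ --- is controlled by a collection of such linear equalities implied by (a) and (b), and is therefore inherited by $B$.

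For (i), I would apply this observation to the type-(A) inequalities $a_{i,j}+a_{j,l}\ge a_{i,l}$ of Proposition~\ref{moreineqs}: the bracket formulas (\ref{Acomm}) then show that the structure constants of $\mathfrak n_-^A$ in the basis $\{f_{i,j}^A\}$ coincide with those of $\mathfrak n_-^B$ in the basis $\{f_{i,j}^B\}$, so $\Theta$ is a Lie algebra isomorphism. For (ii), I would first handle each fundamental representation. By Proposition~\ref{antichainmin}, $s^A_{i_1,\ldots,i_k}$ is a specific non-negative integer combination of the $a_{i,j}$; and whenever classically $f_{i,j}\,e_{i_1,\ldots,i_k}=\pm e_{i'_1,\ldots,i'_k}$, the PBW filtration immediately yields the cone-wide inequality
\[a_{i,j}+s^A_{i_1,\ldots,i_k}\ge s^A_{i'_1,\ldots,i'_k},\]
because $f_{i,j}\in(\mathfrak n_-)_{a_{i,j}}$ acting on $e_{i_1,\ldots,i_k}\in(L_{\om_k})_{s^A_{i_1,\ldots,i_k}}$ lands in a filtration degree at most the sum. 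The definition of the associated graded action then gives $f_{i,j}^A e^A_{i_1,\ldots,i_k}=\pm e^A_{i'_1,\ldots,i'_k}$ precisely when equality holds and $0$ otherwise, and the face observation forces this dichotomy to match for $A$ and $B$. Lemma~\ref{tensor} then realises $L_\la^A$ as $\mathcal U^A w_\la^A\subset U_\la^A$, and the identifications on each tensor factor assemble into an $\mathfrak n_-^A$-equivariant isomorphism $L_\la^A\cong L_\la^B$ along $\Theta$.

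For (iii), the nilpotency of $\mathfrak n_-^A$ makes $\exp:\mathfrak n_-^A\to N^A$ surjective, so $N^A y_\la^A=\exp(\mathfrak n_-^A)y_\la^A$ is fully determined by the actions on the fundamental factors already matched between $A$ and $B$; hence $N^A y_\la^A=N^B y_\la^B$ inside $\mathbb P_{\bf d}^A=\mathbb P_{\bf d}^B$, and their Zariski closures $F_\la^A$ and $F_\la^B$ coincide. The main subtlety I foresee is certifying that every linear inequality invoked is genuinely valid on the whole of $\mathcal K$, so that its vanishing locus is indeed a face; this is explicit for the bracket relations by Proposition~\ref{moreineqs}, and for the action dichotomy by the filtration argument above, after which the face-theoretic observation applies uniformly.
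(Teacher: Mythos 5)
Your proof is correct, and for part (ii) it takes a genuinely different and somewhat more economical route than the paper. For parts (i) and (iii) you essentially reproduce the paper's argument: (i) rests on the commutation relations (\ref{Acomm}) being governed by which type-(A) inequalities are equalities, with the face-theoretic observation making the transfer to $B$ precise; (iii) follows from the fundamental-factor identifications together with the surjectivity of $\exp$. Where you diverge is in the fundamental-weight case of (ii). The paper characterizes, via the reduction procedure from the proof of Proposition~\ref{antichainmin} (replacing $f^A_{i,j}f^A_{j,l}\to f^A_{i,l}$, etc.), the full set of monomials $M$ with $Mv_{\om_k}^A=\pm e^A_{i_1,\ldots,i_k}$, then argues this set is determined by which inequalities of types (A),(B) are equalities. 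You instead look only at the action of a single generator $f_{i,j}^A$ on a single basis vector $e^A_{i_1,\ldots,i_k}$: the filtered-module property gives the cone-wide linear inequality $a_{i,j}+s^A_{i_1,\ldots,i_k}\ge s^A_{i'_1,\ldots,i'_k}$ (linear in $A$ because $s^A$ is a sum of certain $a_{p,q}$ by Proposition~\ref{antichainmin}), and equality there is exactly the condition $f^A_{i,j}e^A_{i_1,\ldots,i_k}=\pm e^A_{i'_1,\ldots,i'_k}$ rather than $0$; the face argument then transfers the dichotomy to $B$, which suffices to make $e^A_{i_1,\ldots,i_k}\mapsto e^B_{i_1,\ldots,i_k}$ a $\Theta$-equivariant isomorphism. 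Your approach buys a cleaner and more self-contained argument for (ii) (it needs only the formula in Proposition~\ref{antichainmin}, not the internals of its proof); the paper's version pays for the extra work by producing the monomial description that it reuses directly in the proof of (iii) through the polynomials $C^A_{i_1,\ldots,i_k}$. One small point worth making explicit in your write-up of (iii): the identification of orbits really comes from $\phi_k(\exp(f)v)=\exp(\Theta(f))\phi_k(v)$, which is immediate from the $\Theta$-equivariance of $\phi_k$ established in (ii), applied to $v=v_{\om_k}^A$; as written the step is terse but correct.
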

\begin{proof}
\quad

i) The commutation relations~(\ref{Acomm}) are determined by which inequalities of type (A) are strict and which are not. As seen in the proof of Proposition~\ref{moreineqs}, every inequality of type (A) decomposes into the sum of inequalities of types (a) and (b). Hence, the commutation relations in $\mathfrak n_-^a$ are determined by which inequalities of types (a) and (b) are strict and which are not, i.e. by the minimal face of $\mathcal K$ containing $A$.

ii) First let us consider the case of a fundamental weight $\la=\om_k$. We show that the map sending $e_{i_1,\ldots,i_k}^A$ to $e_{i_1,\ldots,i_k}^B$ provides the desired isomorphism. For some $e_{i_1,\ldots,i_k}^A$
suppose that a monomial $M$ in the $f_{i,j}^A$ is such that $M v_{\om_k}^A=\pm e_{i_1,\ldots,i_k}^A$. Then, as seen in the proof of Proposition~\ref{antichainmin}, the monomial $\prod (f_{i,j}^A)^{T(e^A_{i_1,\ldots,i_k})_{i,j}}$ can be obtained from $M$ by replacing $f^A_{i,j}f^A_{j,l}$ with $f_{i,l}^A$ for $i<j<l$, replacing $f^A_{i,j}f^A_{l,m}$ with $f^A_{i,m}f^A_{l,j}$ for $i<l<j<m$ and commuting the factors. However, $\deg^A M=s^A_{i_1,\ldots,i_k}$ and none of these operations increase the degree $\deg^A$, therefore, each one of them must preserve $\deg^A$. 

In other words, whether a monomial takes $v_{\om_k}$ to $\pm e_{i_1,\ldots,i_k}^A$ depends on whether one may obtain $\prod (f_{i,j}^A)^{T(e^A_{i_1,\ldots,i_k})_{i,j}}$ from $M$ by a series of the above operations preserving $\deg^A$, i.e. on which inequalities of types (A) and (B) are equalities. The same holds for weight system $B$ and our assertion follows.

The general case now follows directly from Lemma~\ref{tensor}.

iii) The polynomial $C^A_{i_1,\ldots,i_k}$ introduced in the proof of Theorem~\ref{main} is determined by the set of monomials in $\mU^A$ taking $v_{\om_k}^A$ to $\pm e_{i_1,\ldots,i_k}^A$. This set, as observed in the proof of part ii), is determined by the minimal face of $\mathcal K$ containing $A$. Therefore, the map $\psi^A$ and its kernel $I_{\bf d}^A$ are also determined by the minimal face of $\mathcal K$ containing $A$ and our assertion follows.
\end{proof}

In particular, we see that $\mathfrak n_-^A$, $L_\la^A$ and $F_\la^A$ coincide with the non-degenerate objects 
$\mathfrak n_-$, $L_\la$ and $F_\la$ whenever $A$ is contained in the minimal face of $\mathcal K$, i.e. the $(n-1)$-dimensional linear space given by the equations $a_{i,j}=a_{i,i+1}+\ldots+a_{j-1,j}$. When all inequalities of type (a) (or (A)) are strict but all the inequalities of type (b) (or (B)) are equalities, $\mathfrak n_-^A$, $L_\la^A$ and $F_\la^A$ coincide with the abelian degenerations $\mathfrak n_-^a$, $L_\la^a$ and $F_\la^a$. When the minimal face containing $A$ is maximal, i.e. $A$ lies in the interior of $\mathcal K$, we are in the toric situation discussed in Section~\ref{toric}.

We have shown that the ideal of the variety $F_\la^A$ is an initial ideal of the ideal of the non-degenerate flag variety $F_\la$. We will now generalize this fact.
\begin{proposition}\label{ABinitial}
Suppose that the weight system $B$ is such that the minimal face of $\mathcal K$ containing $B$ contains the minimal face of $\mathcal K$ containing $A$. Then the ideal $(\varphi^B)^{-1}(I_{\bf d}^B)$ is the initial ideal of $(\varphi^A)^{-1}(I_{\bf d}^A)$ with respect to grading $\grad^B$ on $R_{\bf d}$.
\end{proposition}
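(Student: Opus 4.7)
The plan is to interpolate between $A$ and $B$ via a single combined weight system $C=NA+B$ for a large positive integer $N$, and to exploit two facts: first, the grading $\grad^C$ on $R_{\bf d}$ equals $N\grad^A+\grad^B$ by the linearity of the formula in Proposition~\ref{antichainmin}; second, taking initial ideals with respect to $N\grad^A+\grad^B$ with $N$ sufficiently large coincides with the lexicographic composition $\initial_{\grad^B}\circ\initial_{\grad^A}$. Proposition~\ref{facesame}(iii) will then identify the $C$-degeneration with the $B$-degeneration, completing the argument.

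First I would pin down the minimal face $H_C$. Every defining inequality $\mathcal I$ of $\mathcal K$ satisfies $\mathcal I(C)=N\mathcal I(A)+\mathcal I(B)$ with both summands nonnegative, so $\mathcal I$ is an equality for $C$ iff it is an equality for both $A$ and $B$; that is, the equality sets satisfy $E_C=E_A\cap E_B$. The hypothesis $H_A\subseteq H_B$ is equivalent, via the inclusion-reversing face--equality-set correspondence for $\mathcal K$, to $E_B\subseteq E_A$, whence $E_C=E_B$ and $H_C=H_B$. The proof of Proposition~\ref{facesame}(iii) shows that the kernel of $\psi^{(\cdot)}$ is determined by the minimal face, so, identifying $R_{\bf d}^{(\cdot)}$ with $R_{\bf d}$ via $\varphi$, one obtains the ideal identity $(\varphi^C)^{-1}(I_{\bf d}^C)=(\varphi^B)^{-1}(I_{\bf d}^B)$ in $R_{\bf d}$.

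Unfolding the definitions gives $(\varphi^W)^{-1}(I_{\bf d}^W)=\initial_{\grad^W}(I_{\bf d})$ for $W\in\{A,B,C\}$. Combined with the previous step, the statement of the proposition reduces to
\[
\initial_{\grad^C}(I_{\bf d})=\initial_{\grad^B}\bigl(\initial_{\grad^A}(I_{\bf d})\bigr).
\]
Since $I_{\bf d}$ is $\deg$-homogeneous, this can be verified in each finite-dimensional component $I_{\bf d,\mu}$. Only finitely many monomials appear in $\deg$-degree $\mu$, so the spread $\Delta$ of their $\grad^B$-values is bounded; for $N>\Delta$ and any $f\in I_{\bf d,\mu}$, minimizing $\grad^C=N\grad^A+\grad^B$ over the monomials in $f$ is the same as first minimizing $\grad^A$ and then $\grad^B$, so $\initial_{\grad^C}(f)=\initial_{\grad^B}(\initial_{\grad^A}(f))$. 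This yields the inclusion ``$\subseteq$'', and equality follows by dimension count, using the standard fact $\dim\initial_w V=\dim V$ for any finite-dimensional subspace $V$ and any weight grading $w$ (applied to $V=I_{\bf d,\mu}$ and $V=\initial_{\grad^A}(I_{\bf d,\mu})$).

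The main obstacle is a small bookkeeping point: the threshold $N>\Delta$ depends on $\mu$, so no single $N$ works uniformly across all components. This is harmless, however: Step 1 yields the ideal identity $(\varphi^C)^{-1}(I_{\bf d}^C)=(\varphi^B)^{-1}(I_{\bf d}^B)$ for every $N\geq 1$, and the target identity of the proposition is manifestly $N$-free; one therefore applies the component-wise argument with $N=N(\mu)$ and assembles the equalities into the desired global identity of $\deg$-homogeneous ideals in $R_{\bf d}$.
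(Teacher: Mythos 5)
Your proof is correct, but it takes a genuinely different route from the paper's. The paper argues directly with the coordinate functions $C^A_{i_1,\ldots,i_k}$ introduced in the proof of Theorem~\ref{main}: using that the set of equalities among (a), (b) for $A$ contains that for $B$, it shows each $C^B_{i_1,\ldots,i_k}$ is the $\grad_z^B$-initial part of $C^A_{i_1,\ldots,i_k}$, hence $\psi^B\bigl(\varphi^B(\initial_{\grad^B}((\varphi^A)^{-1}(\delta)))\bigr)$ is the $\grad_z^B$-initial part of $\psi^A(\delta)=0$; a dimension count finishes. Your argument instead stays at the level of initial ideals: you introduce the auxiliary weight system $C=NA+B$, use the face--equality-set correspondence to conclude $H_C=H_B$ and hence (via Proposition~\ref{facesame}(iii) and Theorem~\ref{main}) the pulled-back ideal identity $(\varphi^C)^{-1}(I_{\bf d}^C)=(\varphi^B)^{-1}(I_{\bf d}^B)$, and then combine the linearity $\grad^C=N\grad^A+\grad^B$ (from Proposition~\ref{antichainmin}) with the standard lexicographic-refinement fact that, component by component and for $N$ large, $\initial_{\grad^C}=\initial_{\grad^B}\circ\initial_{\grad^A}$. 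Both proofs ultimately rest on the same dimension-count step to promote an inclusion to an equality. Your approach is more modular --- it uses Proposition~\ref{facesame}(iii) as a black box together with a generic Gr\"obner/initial-ideal lemma, without re-examining the polynomials $C^A$ --- at the cost of introducing the interpolating weight system and justifying the $N$-threshold bookkeeping, which you handle correctly by noting the target statement is $N$-free while Step~1 holds for every $N\ge 1$.
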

\begin{proof}
Since the dimensions of the homogeneous components coincide, it suffices to consider some 
$\delta\in I_{\bf d}^A$ and show that $\initial_{\grad^B}((\varphi^A)^{-1}(\delta))\in(\varphi^B)^{-1}(I_{\bf d}^B)$. 
In other words, we are to show that $$\psi^B\left(\varphi^B(\initial_{\grad^B}((\varphi^A)^{-1}(\delta)))\right)=0.$$

Now, if a monomial $M$ in the $f_{i,j}^B$ takes $v_{\om_k}^B$ to $\pm e_{i_1,\ldots,i_k}^B$, then the monomial obtained from 
$M$ by replacing the $B$ superscripts with $A$ superscripts takes $v_{\om_k}^A$ to $\pm e_{i_1,\ldots,i_k}^A$. This follows from the set of inequalities of types (a) and (b) that are equalities being larger for $A$ than for $B$. This implies that $C^B_{i_1,\ldots,i_k}$ is the initial part of $C^A_{i_1,\ldots,i_k}$ with respect to $\grad_z^B$. Therefore, 
$$\psi^B\left(\varphi^B(\initial_{\grad^B}((\varphi^A)^{-1}(\delta)))\right)$$ is the initial part of $\psi^A(\delta)=0$ with respect to 
$\grad_z^B$.
\end{proof}

We may now replicate Proposition~\ref{free} and the preceding discussion to show that in the assumptions of Proposition~\ref{ABinitial} the variety $F_\la^B$ provides a flat degeneration of $F_\la^A$.

To complete this section we derive Proposition~\ref{Aquadratic}.
\begin{proof}[Proof of Proposition~\ref{Aquadratic}]
First we consider the case when $A$ lies in the interior of $\mathcal K$, i.e. the case discussed in Section~\ref{toric}. If we denote $J_{\bf d}^A$ the ideal in $R_{\bf d}^A$ generated by the quadratic part of $I_{\bf d}^A$, then, in view of Corollary~\ref{Apbwssyt}, it suffices to show that for every monomial $M\in R_{\bf d}^A$ there exists some $\delta\in J_{\bf d}^A$ such that $M+\delta$ is a linear combination of monomials of the form $X^A(Y)$ with $Y$ PBW semistandard.

Recall the partial order $\preceq$ from Remark~\ref{lattice}. Consider a product $X^A_{i_1,\ldots,i_k}X^A_{i'_1,\ldots,i'_{k'}}\in R_{\bf d}^A$ such that $k\ge k'$ and $\{i_1,\ldots,i_k\}\not\preceq\{i'_1,\ldots,i'_{k'}\}$. Let $Z$ denote the PBW tableau with elements $i_1,\ldots,i_k$ in its first column and $i'_1,\ldots,i'_{k'}$ in its second column. By our assumption, $Z$ in not PBW semistandard, i.e. we have a maximal 
$i_0$ such that for all $i_0\le i\le k$ we have $Z_{i,1}<Z_{i_0,2}$. Let us denote $W$ the tableau obtained from $Z$ by exchanging $Z_{i,1}$ with $Z_{i,2}$ for all $i\le i_0$. Note that $W$ is also a PBW tableau and that $W_{i_0,2}<Z_{i_0,2}$. Let $j_1<\ldots<j_k$ be the elements in the first column of $W$ and $j'_1<\ldots<j'_{k'}$ be the elements in its second column. Then we have 
$$T(e_{i_1,\ldots,i_k}^A)+T(e_{i'_1,\ldots,i'_{k'}}^A)=T(e_{j_1,\ldots,j_k}^A)+T(e_{j'_1,\ldots,j'_{k'}}^A)$$ 
which, in view of the discussion in Section~\ref{toric}, means that we have a relation of the form $$X^A_{i_1,\ldots,i_k}X^A_{i'_1,\ldots,i'_{k'}}\pm X^A_{j_1,\ldots,j_k}X^A_{j'_1,\ldots,j'_{k'}}\in I_{\bf d}^A.$$

Now consider any monomial $X^A(Y)\in R_{\bf d}^A$ with $Y$ a PBW tableau that is not PBW semistandard. For some column $j$ we have $\{Y_{\cdot,j}\}\not\preceq\{Y_{\cdot,j+1}\}$ and we may apply the procedure from the previous paragraph to the $j$th and $j+1$st column of $Y$ to obtain a new PBW tableau $Y'$ with $X^A(Y)\pm X^A(Y')\in J_{\bf d}^A$. Furthermore, the lowest differing element in the rightmost differing column of $Y$ and $Y'$ will be smaller in $Y'$ than in $Y$, i.e. $Y'$ will be smaller with respect to the corresponding lexicographic order. Consequently, iteration of this procedure will provide the desired relation for the monomial $X^A(Y)$.  

We derive the general case from the toric case. Consider a relation $\delta\in I_{\bf d}^A$, homogeneous with respect to both 
$\deg$ and $\grad^A$. Consider any weight system $B$ in the interior of $\mathcal K$. By Proposition~\ref{ABinitial} we have an 
$\delta'\in I_{\bf d}^B$ such that $$\initial_{\grad^B}((\varphi^A)^{-1}(\delta))=(\varphi^B)^{-1}(\delta').$$ Let 
$\delta'=K_1 P'_1+\ldots+ K_n P'_n$ for $\deg$-homogeneous $K_i\in R_{\bf d}^B$ and quadratic $\deg$-homogeneous and 
$\grad^B$-homogeneous $P'_i\in I_{\bf_d}^B$. We have quadratic $P_i\in I_{\bf d}^A$ with 
$$\varphi^B(\initial_{\grad^B}((\varphi^A)^{-1}(P_i)))=P'_i$$ and, consequently, 
\begin{multline*}
\initial_{\grad^B}((\varphi^B)^{-1}(K_1) (\varphi^A)^{-1}(P_1)+\ldots+(\varphi^B)^{-1}(K_n)(\varphi^A)^{-1}(P_n))=\\\initial_{\grad^B}((\varphi^B)^{-1}(\delta')).
\end{multline*}
Therefore, if we introduce the grading $\grad^B$ on $R_{\bf d}^A$ via $\varphi^A$, the $\grad^B$ grading of the $\grad^B$-initial part of the difference 
$$\delta-\varphi^A((\varphi^B)^{-1}(K_1))P_1-\ldots-\varphi^A((\varphi^B)^{-1}(K_n))P_n$$ is greater than $\grad^B(\delta')$. 
However, within a given $\deg$-homogeneous component the grading $\grad^B$ takes only a finite number of values. Therefore, passing from 
$\delta$ to the above difference and then iterating the procedure will express $\delta$ as a $R_{\bf d}^A$-linear combination of 
polynomials in the quadratic part of $I_{\bf d}^A$.
\end{proof} 

\begin{remark}\label{pbwlocus}
Let us consider the face $\mathcal{F}$ of $\mathcal K$ defined by the condition that all inequalities in (B) are equalities.
$\mathcal{F}$ has $2^{n-2}$ subfaces determined by the set of inequalities in (A) turning into equalities. There is a one-to-one correspondence between the subfaces of $\mathcal{F}$ and the weight systems given in Proposition 7 of \cite{CFFFR}.
More precisely, each subface contains exactly one weight system from Proposition 7 of \cite{CFFFR} (the number of these weight systems
is $2^{n-2}$ -- note the notation shift $n+1\to n$ from \cite{CFFFR} to this paper).
\end{remark}

\section{A maximal cone in the tropical flag variety}\label{tropical}
We briefly recall some basic facts on the tropical flag varieties (see \cite{MaS}, \cite{SS}).
Let us consider a $(2^n-2)$-dimensional real vector space. A point ${\bs}\in \bR^{2^n-2}$ has coordinates 
$s_I=s_{i_1,\dots,i_k}$ labeled by collections $1\le i_1<\dots <i_k\le n$ with $1\le k<n$.
A point $\bs$ defines a grading on the polynomial ring in Pl\"ucker variables $X_I$ attaching degree $s_I$ to $X_I$.  

We consider the Pl\"ucker embedding of the flag variety $\mathcal{F}l_n:=SL_n/B$ into the product of projective spaces.
For the ideal $J$ consisting of all multi-homogeneous polynomials in $X_I$ vanishing on the image of $SL_n/B$
and a point ${\bs}\in \bR^{2^n-2}$ we consider the initial ideal $J_{\bs}$ (with respect to the grading defined by $\bs$).
The tropical flag variety $\trop(\mathcal{F}l_n)$ consists of points $\bs$ such that $J_{\bs}$ does not contain monomials. 

\begin{remark}\label{zerocond}
From the tropical point of view, it is natural to quotient the space $\bR^{2^n-2}$ by $\bR^{n-1}$, since a point 
${\bs}=(s_I)$ belongs
to the tropical flag variety if and only if so do all the points of the form $\{s_I+b_{|I|}\}$ for any collection of numbers 
$(b_k)_{k=1}^{n-1}\in\bR^{n-1}$. In what follows we choose a normalization assuming that 
$s_{1,\dots,k}=0$ for all $k=1,\dots,n-1$.
\end{remark}

We consider the map from the cone 
$\mathcal K$ of weight systems to $\bR^{2^n-2}$ that maps $A=\{a_{i,j}\}$ to $\{s^A_I\}$, $I=\{i_1,\dots,i_k\}$ defined in Proposition \ref{antichainmin}. Recall that  
if $p_1<\ldots<p_l$ comprise the difference $\{1,\ldots,k\}\backslash\{i_1,\ldots,i_k\}$ and the integers $q_1>\ldots>q_l$ 
comprise the difference $\{i_1,\ldots,i_k\}\backslash\{1,\ldots,k\}$, then 
\begin{equation}\label{sA}
s^A_{i_1,\ldots,i_k}=a_{p_1,q_1}+\ldots+a_{p_l,q_l}.
\end{equation}
Let $h:\bR^{n(n-1)/2}\to \bR^{2^n-2}$ be the linear map defined by formula \ref{sA}.
We denote the image $h(\mathcal K)$ by $\mathcal C$. In particular, $\mathcal C$ is contained in the 
$(2^n-2)$-dimensional vector space,
whose coordinates will be denoted by $s_I$, $I$ being a proper subset of $\{1,\dots,n\}$.
We note that for any point $\bs$ in the image of $h$ one has $s_{1,\dots,k}=0$ for $1\le k\le n-1$.
The following Lemma describes $\mathcal C$ explicitly. 
\begin{lemma}
The cone $\mathcal C$ is cut out by the following set of equalities and inequalities: 
\begin{enumerate}[label=\lbrack\roman*\rbrack]
\item\label{1} $s_{1,\dots,k}=0$ for $1\le k\le n-1$,
\item\label{2} for any pair $1\le i<j\le n$ and any $i\le k<l<j$ one has $s_{1,\dots,i-1,i+1,\dots,k,j}=s_{1,\dots,i-1,i+1,\dots,l,j}$,
\item\label{3} given a proper $I$, let $p_s,q_s$, $s=1,\dots,l$ be the numbers defined above; then 
$s_I=s_{1,\dots,p_1-1,q_1}+\dots + s_{1,\dots,p_l-1,q_l}$;
\item\label{4} $s_{1,\dots,i-1,i+1}+s_{1,\dots,i,i+2}\ge s_{1,\dots,i-1,i+2}$ for $1\le i\le n-2$,
\item\label{5} $s_{1,\dots,i-1,j}+s_{1,\dots,i,j+1}\ge s_{1,\dots,i-1,j+1}+s_{1,\dots,i,j}$ for $1\le i<j-1\le n-2$.
\end{enumerate}
The image $h(\bR^{n(n-1)/2})$ is cut out by the first three linear relations.
\end{lemma}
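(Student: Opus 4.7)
The plan is to observe first that the map $h$ admits a natural set of ``atomic'' coordinates, and that conditions [1]--[3] simply encode the fact that $h$ is determined by these, while [4]--[5] are verbatim translations of the defining inequalities (a) and (b) of $\mathcal K$.

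The key initial observation is that for any pair $1\le i<j\le n$ and $I=\{1,\ldots,i-1,j\}$, formula (\ref{sA}) gives $\{1,\ldots,|I|\}\setminus I=\{i\}$ and $I\setminus\{1,\ldots,|I|\}=\{j\}$, hence $s^A_{1,\ldots,i-1,j}=a_{i,j}$. Thus under $h$ the $n(n-1)/2$ ``atomic'' coordinates $s_{1,\ldots,i-1,j}$ are in bijective correspondence with the entries of a weight system. This immediately yields [1] (when $I=\{1,\ldots,k\}$ the two symmetric differences in Proposition~\ref{antichainmin} are empty, so $s^A_I=0$), [3] (a direct rewriting of (\ref{sA}) using $a_{p_t,q_t}=s^A_{1,\ldots,p_t-1,q_t}$), and [2] (a special case of [3]: both subsets in [2] have $p_1=i$, $q_1=j$ as the unique pair). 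The inequalities [4] and [5] are exactly conditions (a) and (b) defining $\mathcal K$, rewritten in terms of the atomic coordinates; since $A\in\mathcal K$ iff (a) and (b) hold, they are satisfied on $\mathcal C$.

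For the converse, suppose $\bs\in\bR^{2^n-2}$ satisfies [1]--[3]. Define $A=(a_{i,j})$ by $a_{i,j}:=s_{1,\ldots,i-1,j}$. Then [3] is precisely the statement $h(A)=\bs$, so $\bs\in h(\bR^{n(n-1)/2})$. This proves the ``moreover'' statement: the image $h(\bR^{n(n-1)/2})$ is cut out by [1]--[3] (noting that [2] is actually a consequence of [3] but is kept for transparency). If in addition $\bs$ satisfies [4] and [5], then the $a_{i,j}$ so defined satisfy (a) and (b), so $A\in\mathcal K$ and $\bs=h(A)\in\mathcal C$.

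There is no genuine obstacle here: once the atomic identification $s_{1,\ldots,i-1,j}=a_{i,j}$ is recorded, every condition either reduces to formula (\ref{sA}) or is a direct rewriting of the cone inequalities from Section~\ref{wPBW}. The only thing to be slightly careful about is verifying that [2] really is the only collection of ``hidden'' linear relations one might need; this is handled by observing that the right-hand side of [3] depends on $I$ only through the pairs $(p_t,q_t)$, so no further linear dependencies exist among the $s^A_I$ beyond those already displayed.
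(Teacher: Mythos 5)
Your proof is correct and follows essentially the same route as the paper, whose brief argument also rests on the identification $s^A_{1,\ldots,i-1,j}=a_{i,j}$ and the observation that conditions [4]--[5] are just (a)--(b) in these coordinates. If anything your write-up is more complete than the paper's, since you explicitly check the converse inclusion (that a point satisfying [1]--[5] lies in $\mathcal C$) by reconstructing $A$ from the atomic coordinates, whereas the paper only exhibits why the image satisfies the listed relations and leaves the reverse direction implicit; you also correctly note the minor redundancy that [2] (and in fact [1], via the empty-sum case) follows from [3].
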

\begin{proof}
The first and third conditions are obvious from the definition. The second condition comes from the fact that for any $i<k<j$ one has 
$s^A_{1,\dots,i-1,i+1,\dots,k,j}=a_{i,j}$. Fourth and fifth conditions come from the definition of a weight system.
\end{proof}

\begin{theorem}\label{Trop}
$\mathcal C$ is a maximal cone in the tropical flag variety for any $n>1$. 
\end{theorem}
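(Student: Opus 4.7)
The strategy is to identify $\mathcal C$ with the Gröbner cone $\sigma$ of the toric initial ideal $I^{\mathrm{tor}}$ (taking $\la=\om_1+\cdots+\om_{n-1}$ so that $F_\la$ is the full flag variety $\mathcal F l_n$).

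First, I would record the two easy facts. The inclusion $\mathcal C\subseteq\trop(\mathcal F l_n)$ follows because for each $A\in\mathcal K$, the initial ideal at $h(A)$ equals $I_{\bf d}^A$, which by Theorem~\ref{main} is the defining ideal of the irreducible variety $F_\la^A$ and hence monomial-free. The dimension of $\mathcal C$ is $\binom{n}{2}$: the preceding Lemma shows that $a_{i,j}=s^A_{\{1,\dots,i-1,j\}}$, so $h$ is injective and $\dim\mathcal C=\dim\mathcal K=\binom{n}{2}$. After the normalization $s_{1,\dots,k}=0$, which kills the $(n-1)$-dimensional lineality, this matches the dimension of a maximal cone of $\trop(\mathcal F l_n)$.

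Next I would locate $\mathcal C$ inside a single cell of the Gröbner fan. By Theorem~\ref{toricthm} and Proposition~\ref{facesame}, whenever $A$ lies in the relative interior of $\mathcal K$, the initial ideal $I^A$ equals a fixed toric ideal $I^{\mathrm{tor}}$ independent of $A$. Since $h$ is a linear homeomorphism from $\mathcal K$ onto $\mathcal C$, its relative interior is $h(\mathrm{relint}(\mathcal K))$, so $\initial_{\bs}J=I^{\mathrm{tor}}$ for every $\bs$ in $\mathrm{relint}(\mathcal C)$. Hence $\mathcal C$ is contained in the closed Gröbner cone $\sigma$ of $I^{\mathrm{tor}}$, and $\dim\sigma\ge\binom{n}{2}$, forcing $\sigma$ to be a maximal cone of $\trop(\mathcal F l_n)$.

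To upgrade $\mathcal C\subseteq\sigma$ to an equality, I would argue that any $\bs\in\sigma$ already satisfies all defining conditions of $\mathcal C$ from the preceding Lemma. The equalities [1]--[3] cut out the linear image $h(\mathbb R^{n(n-1)/2})$; these must be forced on any $\bs\in\sigma$ because the generating binomials of $I^{\mathrm{tor}}$ (initial parts of Plücker relations with respect to the toric degeneration) equate specific pairs of monomials, giving $s_I+s_J=s_K+s_L$ for those pairs and, after iteration, yielding the additive decomposition $s_I=\sum s_{\{1,\dots,p_r-1,q_r\}}$ of [3] (which implies [2]). Setting $a_{i,j}:=s_{\{1,\dots,i-1,j\}}$ then gives $\bs=h(A)$. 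The remaining facet inequalities [4] and [5] of $\mathcal C$ are the tropicalizations of specific three-term Plücker relations in which the initial binomial prescribed by $I^{\mathrm{tor}}$ selects exactly the two monomials appearing on the left-hand sides of [4], [5]; since $\bs\in\sigma$, the $\bs$-minimum of each such relation is achieved at that distinguished pair, which is precisely inequality (a) or (b) for $A$. Hence $A\in\mathcal K$ and $\bs\in\mathcal C$, proving $\sigma=\mathcal C$.

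The main obstacle is the last step: exhibiting, for each equality [2]/[3] and each facet inequality [4]/[5], an explicit Plücker relation whose $\grad^A$-initial part (for $A$ in the interior of $\mathcal K$) is the binomial enforcing exactly that tropical equality or inequality. This is the combinatorial heart of the argument, tying the FFLV-polytope structure of $I^{\mathrm{tor}}$ directly to the inequalities defining the cone $\mathcal K$; once these matches are in place, the non-redundancy of [4], [5] immediately gives the facet description of the maximal cone advertised in the introduction.
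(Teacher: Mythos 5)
Your overall strategy is essentially the paper's: show $\mathcal C\subseteq\trop(\mathcal{F}l_n)$ by irreducibility, compute $\dim\mathcal C=\binom{n}{2}$, and then argue that any cone of the tropical variety containing $\mathcal C$ must coincide with it, using explicit three-term Pl\"ucker relations to enforce the facet inequalities [4] and [5]. The reframing in terms of the Gr\"obner cone $\sigma$ of $I^{\mathrm{tor}}$ is a cosmetic variation of the paper's choice to consider an arbitrary maximal cone $\mathcal C'\supseteq\mathcal C$; the content is the same.

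There are, however, two issues. The minor one is an unnecessary detour: you try to derive the linear equalities [2] and [3] from the binomial generators of $I^{\mathrm{tor}}$. Once you know $\dim\sigma=\binom{n}{2}=\dim\mathcal C$ and $\mathcal C\subseteq\sigma$, the affine hull of $\sigma$ must equal the affine hull of $\mathcal C$, which is exactly $h(\mathbb R^{n(n-1)/2})$; that containment already forces [1]--[3] on all of $\sigma$ with no combinatorics. (This is precisely how the paper handles it: any point of $\mathcal C'$ outside $h(\mathbb R^{n(n-1)/2})$ would make $\mathcal C'$ have dimension exceeding $\binom{n}{2}$.) The substantive gap is the one you yourself flag as ``the main obstacle'': you never actually exhibit the Pl\"ucker relations that enforce [4] and [5]. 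Without them the proof is incomplete. The paper supplies them explicitly. For [4], with $1\le i\le n-2$, the relation is
\[
X_{1,\dots,i,i+2}X_{1,\dots,i-1,i+1}-X_{1,\dots,i,i+1}X_{1,\dots,i-1,i+2}-X_{1,\dots,i-1,i+1,i+2}X_{1,\dots,i},
\]
and using [1] and [3] (which give $s_{1,\dots,i,i+1}=s_{1,\dots,i}=0$ and $s_{1,\dots,i-1,i+1,i+2}=s_{1,\dots,i-1,i+2}$) one checks the second and third monomials have equal $\bs$-weight, so if [4] fails the initial form is the single monomial $X_{1,\dots,i,i+2}X_{1,\dots,i-1,i+1}$, contradicting $\bs\in\trop(\mathcal{F}l_n)$. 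For [5], with $1\le i<j-1\le n-2$, the analogous relation is
\[
X_{1,\dots,i-1,i+1,j}X_{1,\dots,i,j+1}-X_{1,\dots,i-1,i+1,j+1}X_{1,\dots,i,j}-X_{1,\dots,i-1,j,j+1}X_{1,\dots,i,i+1},
\]
where again [1]--[3] show the last two monomials share a $\bs$-weight, so failure of [5] would make the initial form the single monomial $X_{1,\dots,i-1,i+1,j}X_{1,\dots,i,j+1}$. Finding and verifying these two families is the combinatorial heart of the theorem; your proposal correctly identifies that this is what is needed but does not carry it out.
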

\begin{proof}
Since all the degenerate flag varieties are irreducible, the cone $\mathcal C$ is contained in the tropical flag variety $\trop(\mathcal{F}l_n)$.
Now assume that there exists a larger cone in $\trop(\mathcal{F}l_n)$ containing $\mathcal C$. Since the dimension of a maximal cone in $\trop(\mathcal{F}l_n)$
is equal to $n(n-1)/2=\dim SL_n/B$ (see e.g. \cite{SS}) and $\dim \mathcal C=n(n-1)/2$ we conclude that if a maximal cone 
$\mathcal C'\subset \trop(\mathcal{F}l_n)$ contains $\mathcal C$, then $\mathcal C'$ is contained in $h(\bR^{n(n-1)/2})$
(if a point of $\mathcal C'$ is not contained in $h(\bR^{n(n-1)/2})$, then the dimension of the convex hull 
of this point and $\mathcal C$ is greater than $\dim \mathcal C$). This implies that if a point $\bs$ is contained in the
maximal cone $\mathcal C'\subset \trop(\mathcal{F}l_n)$, then $\bs$ satisfies conditions \ref{1}, \ref{2}, \ref{3}. 
So we are left to show that a point $\bs\in\mathcal C'$ satisfies conditions \ref{4} and \ref{5}. 

For $1\le i\le n-2$ we consider Pl\"ucker relation 
\[
X_{1,\dots,i,i+2}X_{1,\dots,i-1,i+1} - X_{1,\dots,i,i+1}X_{1,\dots,i-1,i+2} - X_{1,\dots,i-1,i+1,i+2}X_{1,\dots,i}. 
\]
If $s_{1,\dots,i-1,i+1}+s_{1,\dots,i,i+2}< s_{1,\dots,i-1,i+2}$, then the initial part of this relation
is a monomial $X_{1,\dots,i,i+2}X_{1,\dots,i-1,i+1}$, which implies that ${\bs}\notin \trop(\mathcal{F}l_n)$. We conclude that
inequality \ref{4} holds true for all $i$.

Now for $1\le i<j-1\le n-2$ we consider the Pl\"ucker relation
\[
X_{1,\dots,i-1,i+1,j}X_{1,\dots,i,j+1} - X_{1,\dots,i-1,i+1,j+1}X_{1,\dots,i,j} - X_{1,\dots,i-1,j,j+1}X_{1,\dots,i,i+1} 
\]
If $s_{1,\dots,i-1,j}+s_{1,\dots,i,j+1}< s_{1,\dots,i-1,j+1}+s_{1,\dots,i,j}$, then 
the initial term of this relation is the monomial $X_{1,\dots,i-1,i+1,j}X_{1,\dots,i,j+1}$, 
which implies that ${\bs}\notin \trop(\mathcal{F}l_n)$. We conclude that \ref{5} holds true. 

We have shown that for any $\bs\in\mathcal C'$ one has $\bs\in\mathcal C$. Hence $\mathcal C=\mathcal C'$.
\end{proof}

\section{Line bundles and BW theorem}\label{BW}

Consider the embedding $\iota_\la:F_\la^A\hookrightarrow\mathbb P(L_\la^A)$ and the line bundle $\mathcal O_{\mathbb P(L_\la^A)}(1)$. 
We denote $\mathcal L_\la^A$ the pullback $\iota_\la^*(\mathcal O_{\mathbb P(L_\la^A)}(1))$. Note that, since $F_\la^A$ depends 
only on $\bf d$, the sheaf $\mathcal L_\mu^A$ on $F_\la^A$ is now defined for all 
$\mu\in\mathbb Z_{\ge 0}\{\omega_{d_1},\ldots,\omega_{d_s}\}$.

These line bundles can also be obtained in a different way. The $\mathfrak h^*$-grading on $R_{\bf d}^A$, the coordinate 
ring of $\mathbb P_{\bf d}^A$, induces an $\mathfrak h^*$-grading on its structure sheaf. Denote $\mathcal O_{\bf d}(\la)$ the component 
of homogeneity degree $\la$. This sheaf is seen to be the restriction of $\mathcal O_{\mathbb P(U_\la^A)}(1)$. Therefore, the 
restriction of $\mathcal O_{\bf d}(\la)$ to $F_\la^A$ coincides with the restriction of $\mathcal O_{\mathbb P(U_\la^A)}(1)$ 
to $F_\la^A$, the latter being $\mathcal L_\la^A$.

In view of the previous paragraph, for $\mu\in\mathbb Z_{\ge 0}\{\omega_{d_1},\ldots,\omega_{d_s}\}$ the space of global sections 
$H^0(F_\la^A,\mathcal L_\mu^A)$ is the homogeneous component of degree $\mu$ in the coordinate ring $R_{\bf d}^A/I_{\bf d}^A$. 
Furthermore, $F_\la^A$ is obviously acted on by $N^A$ and $\mathcal L_\la^A$ is $N^A$-equivariant. This leads us to an 
analogue of the Borel-Weil theorem.
\begin{theorem}
As a representation of $N^A$ the space $H^0(F_\la^A,\mathcal L_\mu^A)$ is isomorphic to the dual $(L_\mu^A)^*$.  
\end{theorem}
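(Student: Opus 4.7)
My plan is to construct a natural $N^A$-equivariant linear map $\Phi:(L_\mu^A)^*\to H^0(F_\la^A,\mathcal L_\mu^A)$ and prove it is an isomorphism by checking injectivity and then matching dimensions.

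First I would set up $\Phi$. Since $F_\la^A\subset\mathbb P_{\bf d}^A$ depends only on $\bf d$ (by Theorem~\ref{main}) and $\mu$ is supported on $\bf d$, the same Plücker data identifies $F_\la^A$ with $F_\mu^A\hookrightarrow\mathbb P(L_\mu^A)$; let $\iota_\mu$ denote this embedding, so that $\mathcal L_\mu^A=\iota_\mu^*\mathcal O_{\mathbb P(L_\mu^A)}(1)$ (alternatively this is visible through Lemma~\ref{tensor}, which identifies $L_\mu^A$ with the cyclic piece $\mU^A w_\mu^A\subset U_\mu^A$, making the composition $F_\la^A\to\mathbb P_{\bf d}^A\to\mathbb P(U_\mu^A)$ factor through $\iota_\mu$). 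The canonical identification $H^0(\mathbb P(L_\mu^A),\mathcal O(1))=(L_\mu^A)^*$ is $N^A$-equivariant, and pulling back sections along $\iota_\mu$ yields the desired $N^A$-equivariant map $\Phi$.

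Next, to prove injectivity of $\Phi$, suppose $\ell\in(L_\mu^A)^*$ lies in $\ker\Phi$. Then $\ell$ vanishes on the affine cone over $\iota_\mu(F_\la^A)\subset\mathbb P(L_\mu^A)$, which contains the orbit $N^Av_\mu^A$. Since $\mathfrak n_-^A$ is nilpotent and acts locally nilpotently on $L_\mu^A$, for every $f\in\mathfrak n_-^A$ the expression $\exp(tf)v_\mu^A=\sum_{k\ge 0}\frac{t^k}{k!}f^kv_\mu^A$ is a polynomial in $t$; varying $t$ shows each $f^kv_\mu^A$ lies in the linear span of $N^Av_\mu^A$. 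Iterating by composing exponentials yields $\spanned(N^Av_\mu^A)=\mU(\mathfrak n_-^A)v_\mu^A=L_\mu^A$, so $\ell=0$.

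Finally I would compare dimensions. By the corollary following Proposition~\ref{fflvlinind} and Lemma~\ref{fflvtab}, $\dim L_\mu^A=|\Pi_\mu|=|\mathcal Y_\mu|$. By the paragraph preceding the theorem, $H^0(F_\la^A,\mathcal L_\mu^A)$ equals the degree-$\mu$ component of $R_{\bf d}^A/I_{\bf d}^A$, whose dimension is $|\mathcal Y_\mu|$ by Corollary~\ref{Apbwssyt}. Hence $\dim(L_\mu^A)^*=\dim H^0(F_\la^A,\mathcal L_\mu^A)$, and the injective $N^A$-equivariant map $\Phi$ is an isomorphism.

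I do not anticipate a serious obstacle: the proof essentially assembles pieces already established. The point requiring the most care is the first step---making sure $\mathcal L_\mu^A$ on $F_\la^A$ is correctly identified with $\iota_\mu^*\mathcal O(1)$ via the identification $F_\la^A\cong F_\mu^A$ coming from Theorem~\ref{main}. The injectivity argument is standard but crucially relies on $\mathfrak n_-^A$ being nilpotent so that the exponential map is polynomial and the orbit $N^Av_\mu^A$ linearly spans $L_\mu^A$.
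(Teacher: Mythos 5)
Your proof follows essentially the same route as the paper: construct the natural restriction map $(L_\mu^A)^*\to H^0(F_\la^A,\mathcal L_\mu^A)$, prove injectivity by observing that the linear span of $N^A v_\mu^A$ equals $\mU^A v_\mu^A = L_\mu^A$, and conclude by matching dimensions via Corollary~\ref{Apbwssyt}. The extra detail you add (nilpotency of $\mathfrak n_-^A$ forcing the exponential to be polynomial, and the chain $\dim L_\mu^A = |\Pi_\mu| = |\mathcal Y_\mu|$) merely expands steps the paper states more briefly; the argument is the same.
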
 
\begin{proof}
Since $\mathcal L_\mu^A$ is defined as the restriction of $\mathcal O_{\mathbb P(L_\mu^A)}(1)$ every functional on 
$L_\mu^A$ provides a global section of $\mathcal L_\mu^A$ and we have a map from $(L_\mu^A)^*$ to $H^0(F_\la^A,\mathcal L_\mu^A)$. 
This map is injective since the linear hull $\mathbb CN^A(v_\mu^A)=\mU^A v_\mu^A$ is all of $L_\mu^A$, i.e. $N^A(v_\mu^A)$ 
is not contained in any proper linear subspace, hence, no global section of $\mathcal O_{\mathbb P(L_\mu^A)}(1)$ vanishes on 
$F_\la^A$. However, we have identified $H^0(F_\la^A,\mathcal L_\mu^A)$ with the component of homogeneity degree $\mu$ in 
$R_{\bf d}^A/I_{\bf d}^A$ and Corollary~\ref{Apbwssyt} shows that the dimension of the latter component is precisely 
$\dim L_\mu^A$. The theorem follows.
\end{proof}

To extend this fact to an analogue of the Borel-Weil theorem (or, rather, its restriction to the case of integral dominant weights) 
we prove the acyclicity of $\mathcal L_\mu^A$.
\begin{theorem}
One has $H^m(F_\la^A,\mathcal L_\mu^A)=0$ for all $\mu\in\mathbb Z_{\ge 0}\{\omega_{d_1},\ldots,\omega_{d_s}\}$ and all $m>0$.
\end{theorem}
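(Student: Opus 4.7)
The plan is to further degenerate $F_\la^A$ to a toric variety, where vanishing is classical, and then transfer the vanishing back via upper semi-continuity of cohomology in a flat family.

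To this end, I would fix a weight system $B$ in the interior of $\mathcal K$. Theorem~\ref{toricthm} identifies $F_\la^B$ with the projective toric variety attached to the FFLV polytope $Q_\la$; the Minkowski property of Lemma~\ref{minkowski} says exactly that $Q_\la$ is a normal lattice polytope, so this toric variety is normal. The line bundle $\mathcal L_\mu^B$ arises from the Segre–Pl\"ucker embedding, and Corollary~\ref{Apbwssyt} together with Lemma~\ref{fflvtab} identifies its space of global sections with the lattice points of $Q_\mu$; in particular $\mathcal L_\mu^B$ is globally generated, hence nef. Demazure vanishing for nef line bundles on complete toric varieties then yields $H^i(F_\la^B,\mathcal L_\mu^B)=0$ for all $i>0$.

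Since $B$ lies in the interior of $\mathcal K$, its minimal face is all of $\mathcal K$ and therefore contains the minimal face of $A$, so Proposition~\ref{ABinitial} applies: $I_{\bf d}^B$ is the initial ideal of (the image of) $I_{\bf d}^A$ with respect to $\grad^B$. Replicating the argument of Proposition~\ref{free} with $I_{\bf d}^A$ in place of $I_{\bf d}$ and $\grad^B$ in place of $\grad^A$ (the authors explicitly note this generalization) produces a flat projective family $\mathfrak F \to \mathbb A^1$ with generic fiber $F_\la^A$ and central fiber $F_\la^B$. The $\mathfrak h^*$-grading on the relative coordinate ring yields a relative line bundle whose fiberwise restrictions are $\mathcal L_\mu^A$ and $\mathcal L_\mu^B$ respectively. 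The upper semi-continuity theorem for cohomology in proper flat families then gives
\[
\dim H^i(F_\la^A,\mathcal L_\mu^A) \;\le\; \dim H^i(F_\la^B,\mathcal L_\mu^B) \;=\; 0
\]
for every $i>0$, which is the desired vanishing.

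The main obstacle lies in carefully setting up the flat family together with its relative line bundle: one must check that the PBW-SSYT basis in each fixed $\mathfrak h^*$-degree (Corollary~\ref{Apbwssyt}) behaves well enough under the $A\rightsquigarrow B$ degeneration that the relative coordinate ring is free over $\mathbb C[t]$ in each homogeneous component, and that the resulting degree-$\mu$ summand is the sheaf on $\mathfrak F$ interpolating between $\mathcal L_\mu^A$ and $\mathcal L_\mu^B$. Once this geometric bookkeeping is in place, the conclusion is immediate from the two off-the-shelf inputs (Demazure vanishing and upper semi-continuity).
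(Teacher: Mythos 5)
Your proof is correct and follows essentially the same route as the paper's: establish vanishing in the toric/interior case from global generation of $\mathcal L_\mu$ (Demazure vanishing on the FFLV toric variety), then transfer to arbitrary $A$ via the flat family of Proposition~\ref{ABinitial} and upper semi-continuity of cohomology. Your exposition is a bit more careful about the direction of the degeneration (toric variety as the special fiber, $F_\la^A$ as the general one) and about the relative line bundle, whereas the paper's proof handles these points tersely.
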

\begin{proof}
We first note that for a weight system $A$ lying in the interior of $\mathcal K$ the claim of the theorem holds
(since the line bundle $\mathcal L_\mu^A$ is generated by its sections, see \cite{fultor}, Section 3.5).
Now, given a weight system $B\in \mathcal K$, Proposition \ref{ABinitial} gives a flat family over $\mathbb A^1$
with the generic fiber $F_\la^B$ and $F_\la^A$ as the special fiber. Then the upper semi-continuity theorem 
for the cohomology groups \cite{H}, chapter III, Theorem 12.8 implies the claim of our theorem.
\end{proof}

\section*{Acknowledgments}
Igor Makhlin would like to thank the Max Planck Institute for Mathematics for providing a welcoming atmosphere, which stimulated the work on these subjects.
The work was partially supported by the grant RSF-DFG 16-41-01013.

\end{document}